\documentclass[notitlepage,reqno,11pt]{amsart}
\usepackage{amsmath}
\usepackage{amssymb}
\usepackage{amsfonts,amsthm,hyperref}
\usepackage[margin=1in]{geometry}
\usepackage{xcolor}
\usepackage[normalem]{ulem}
\usepackage[foot]{amsaddr}
\usepackage{mathtools}

\usepackage{accents}

\usepackage{soul}

\usepackage{enumitem}

\numberwithin{equation}{section}
\newtheorem{assumption}{Assumption}[section]
\newtheorem{lemma}{Lemma}[section]
\newtheorem{theorem}{Theorem}[section]
\newtheorem{definition}{Definition}[section]
\newtheorem{coro}{Corollary}[section]
\newtheorem{prop}{Proposition}[section]
\newtheorem{remark}{Remark}[section]

\newcommand{\RR}{{\mathbb R}}

\newcommand{\uW}{{\underline{\textit{\makebox[.7em]{W}}}}}

\def\E{\mathbb{E}}

\definecolor{ISUred}{RGB}{200, 16, 46}

\newcommand{\Poisson}{{\mathsf{P}}}

\newcommand{\ttl}{\Large Growing open Markovian Jackson networks: Fluid limit \\[5pt] and infinite-dimensional Skorokhod problem}

\newcommand{\ttls}{ \large Growing open Markovian Jackson networks}

\allowdisplaybreaks

\begin{document}

\title[\ttls]{\ttl}

\author{Louis T. Clarke$^*$, Guodong Pang$^*$ and Ruoyu Wu$^{**}$}

\address{$^*$Department of Computational Applied Mathematics and Operations Research,
						George R. Brown School of Engineering and Computing, 
						Rice University,
						Houston, TX 77005}
		\email{lc160,\,gdpang@rice.edu}
\address{$^{**}$Department of Mathematics,
						Iowa State University, Ames, IA 50011
						}
		\email{ruoyu@iastate.edu}
		
\date{\today}

\begin{abstract} 

We study growing open Jackson networks where each station is a single-server queue that follows the first-come first-served discipline with Poisson arrivals and exponentially distributed service times, characterized by node-specific rates.
In applying a fluid scaling to the queue-length process,  
we show that under certain conditions the queueing system can be approximated by an infinite-dimensional fluid limit with a kernel function in place of the transition matrix. This limiting process can be characterized by an infinite-dimensional Skorokhod problem, for which we develop a new theory by considering a broader class of reflection operators and general infinite-dimensional processes. 
We establish existence and uniqueness of a solution along with Lipschitz continuity provided the reflecting operator has a spectral radius less than 1.
By introducing an intermediate process in which the compensated Poisson components are removed, and then lifting this to an infinite-dimensional process, 
we exploit the new Lipschitz property of the infinite-dimensional Skorokhod mapping to prove convergence of the intermediate process. We then prove the necessary estimates for the difference between the original and intermediate processes by using martingale properties. Finally, we consider the empirical measure of the queueing processes, for which we show convergence to the measure associated with the path of the infinite-dimensional fluid limit, extending to the convergence of specific performance-related functionals.
\end{abstract}

\keywords{Growing open Jackson network, fluid limit, empirical measure, infinite-dimensional Skorokhod problem, reflection operator, existence and uniqueness, Lipschitz continuity,  space $D([0,T],L_1)$}

\maketitle

\section{Introduction}
\label{sec:intro}  
Since its introduction \cite{Jackson1957}, Jackson networks have been widely used in various applications, such as manufacturing and production, telecommunication, healthcare and service systems. We refer the readers to the manuscripts on queueing networks such as \cite{ChenYao2001fundamentals, Whitt2002,serfozo2012, robert2013, kelly2011reversibility, kelly2014stochastic,srikant2014communication} for comprehensive overviews. In standard Jackson networks, it is assumed that there is a fixed number of service stations (or nodes), along with a fixed network topology (connectivity between the nodes), and the stochastic flow dynamics of jobs in these networks is the focus of their study.

In open Jackson networks of single-server queues, there are external arrival processes at each service station, and jobs are served in the first-come first-served discipline, and after service completion, jobs may leave the system or be routed randomly to other service stations (possibly back to its own station). 
In the Markovian setting, assuming that the arrival processes are Poisson and the service times are exponentially distributed, a well-known result is the product-form stationary distribution for the joint queue length processes (see, e.g., \cite{Jackson1957,ChenYao2001fundamentals}). 
In heavy traffic, it is established that the queue-length processes, under the law of large numbers (LLN) scaling, converge in probability to the fluid queueing limit, which is characterized by the solution to the finite-dimensional Skorokhod problem (see, e.g., \cite{ChenYao2001fundamentals, Whitt2002}).  
There are various extensions of Jackson networks, including non-Markovian generalized Jackon networks with renewal arrivals and independent and identically distributed (i.i.d.) general service times at each station (see, e.g., \cite{GZ2006}), and Jackson networks with abandonment/reneging (see, e.g., \cite{huang2013diffusion}).

There also exist infinite Markovian open Jackson networks, where it is assumed that the number of service stations is infinite; see, e.g., \cite{kel1989jackson,kelbert1990central}, where the authors study the existence of an invariant measure and central limit theorem for the queue length processes. 
In \cite{stolyar1989asymptotic,fayolle1996,borovkov1998propagation,khmelev2001infinite}, the authors study the stability and mean-field approximations of closed Jackson networks where the numbers of customers and servers grow together to infinity such that their ratio converges to a constant. 
In \cite{rybko2005poisson1, rybko2005poisson2}, the authors study a growing closed and complete Jackson network with uniform transition probabilities to all other nodes, and consider the stationary distribution at any given node. Known as the Poisson hypothesis, they show that certain flow rates can be approximated by Poisson flow rates. This is expanded upon in \cite{rybko2013stationary}, where instead the Jackson network is open and more general transition rates are permitted. Here, stations are split into a fixed number of types, and routing to stations of the same type is uniform. In this way, as the network grows larger, there is not a transition kernel increasing in dimension to infinity as we study in our work.

In this paper, we study open Jackson networks of single-server queues, with independent external Poisson arrivals and i.i.d.\ exponential service times at each station, and with a Markovian routing scheme after service completion at each station. 
Specifically, we start with a finite connected network on $N$ stations, where $\lambda_i^N,\mu_i^N$ are the arrival and service rates at node $i$ respectively, and  $P^N$ is the routing matrix. The object of interest is the $N$-dimensional queue length process $Q^N=\{(Q_1^N(t) \dots, Q_N^N(t)): t\ge0\}$ where $Q_i^N(t)$ represents the number of jobs waiting at station $i$ at time $t$. 
We consider the fluid-scaled queueing process  $\bar{Q}^N(t) = N^{-\alpha} Q^N(N^\alpha t)$ for any $\alpha>0$,  together with an appropriate scaling on the routing matrix $P^N$. We assume that the underlying graph for the connectivity of the nodes from the routing matrix $P^N$ is dense, so that $P^N$ is scaled as of $O(1/N)$ and there exists a nonnegative function $G(u,v)$ over $[0,1]^2$ as the limit of $N P^N$ in the operator norm (see Assumption \ref{assumption:GNGop}; here $G(u,v)$ resembles a graphon but can take any nonnegative values). 
Observe that the dimension of the vector $Q^N$ grows as $N$ increases to infinity.  Hence, unlike the standard fluid analysis, we expect an infinite-dimensional fluid limit as an approximation in a proper sense. 

In the standard study with a fixed number of stations, the fluid limit is given by a finite-dimensional Skorokhod problem, with $(I-P^T)$ as the reflection matrix, where $I$ is the identity matrix and $P^T$ is the transpose of the routing matrix.  
For the growing network model, the fluid limit, denoted as  $\bar{Q}=\{\bar{Q}_u(t): t\ge 0, u \in [0,1]\}$, is characterized by an infinite-dimensional Skorokhod problem, with the corresponding  $({\bf1}-\boldsymbol{G}^T)$ as the reflection operator, where $\boldsymbol{G}$ is the integral operator associated with the function $G(u,v)$. We regard the limit $\bar{Q}_u(t)$ in the space $\mathcal{D}_T(L_1) \coloneq D([0,T], L_1[0,1])$.  
To understand the existence and uniqueness of a solution to the fluid limit $\bar{Q}_u(t)$, we develop a new theoretical framework for infinite-dimensional Skorokhod problems in $\mathcal{D}_T(L_1)$, applying the general theory to establish the well-definedness of the fluid limit $\bar{Q}_u(t)$.

Recall that for the finite-dimensional Skorokhod problems,  there exist two approaches to establish the existence and uniqueness when the network is open and the routing matrix has a spectral radius strictly less than 1. In \cite{Whitt2002}, the problem is first described as defining and finding the infimum of the regulator set. From this, existence, uniqueness and complementarity conditions can be established as well as continuity and lipschitz properties. In \cite{HarrisonReiman1981reflected}, the solution to the Skorokhod problem is achieved by using a contraction map made possible through a diagonalization argument on the routing matrix. 
In this paper, for the infinite-dimensional Skorokhod problem we establish the existence and uniqueness by adapting the first of these two methods, under the assumption that the spectral radius of the reflection operator is less than $1$. 
This requires establishing the regulator map as the pointwise infimum of the regulator set, which can be equivalently shown to be the unique fixed point of a mapping on $\mathcal{D}_T^\uparrow(L_1)$ (time-increasing elements of $\mathcal{D}_T(L_1)$) given in Definition \ref{def:pidefD} (see Theorem \ref{thm:unqfixpoint}). We also give the analogous complementarity characterization property for the reflection map (see Theorem \ref{thm:compchar}).
Moreover, we establish the Lipschitz continuity properties by adapting the methodology of \cite{Whitt2002} to the infinite-dimensional setting. 
This requires first showing that the $k$-fold composition of the reflection operator is bounded in operator norm for some finite $k$, which leads to results in Lipschitz continuity of both the free process and the reflection operator (see Theorem \ref{thm:lipschitz} and Lemma \ref{lem:krefbound}). Combining these two results in general Lipschitz continuity properties in Theorem \ref{thm:kconvergence}.
To facilitate the proof for the convergence of fluid-scaled queueing processes, we also prove similar bounds on convergent sequences of reflection operators in Lemma \ref{lem:opseqnorm}, which leads to Lipschitz continuity of converging processes (see  Theorem \ref{thm:kconvergenceseq}).

In order to prove convergence of the fluid-scaled queueing process $\bar{Q}^N(t)$, we introduce an intermediate fluid-scaled queueing process by removing the compensated Poisson components from the finite system, which then becomes deterministic. We establish the error bounds between the original and intermediate finite-dimensional fluid-scaled processes, by applying martingale inequalities (see Lemma \ref{lem:compbound}). Next, we construct the corresponding infinite-dimensional representation of the intermediate processes, as an infinite-dimensional Skorokhod problem, and employ the Lipschitz continuity property to conclude the convergence of the  infinite-dimensional intermediate fluid-scaled processes to the fluid limit (see Lemma \ref{lem:tQbQ-diff}). By combining these two convergence results we show that under sufficient assumptions on the convergence of $\lambda^N$, $\mu^N$ and $G^N$, the processes converge at a rate of $N^{-\alpha/2}$, where $\alpha$ is a positive constant chosen from the scaling assumptions (see Section \ref{thmprf:fullconv}). Moreover we also consider the empirical measure of the fluid-scaled queueing processes, which is shown to converge to a probability measure associated with the path of the infinite-dimensional fluid limit over the interval $[0,T]$. We discuss how this result applies to several functionals of the queueing process, see Remark \ref{rem:functionalmeasure}. The proof exploits the intermediate process and an argument relating the Wasserstein metric on probability measures to the metric for the queueing processes on $\mathcal{D}_T(L_1)$; see Section \ref{subsec:empconv}.

\subsection{Literature Review}
Our work is relevant to two recent streams of literature: graphon particle systems and queues on graphs. 
Our model can be also regarded as an interacting queueing system on graphs, since each node has its own dynamics as a single-server queue and there is an underlying network topology in the routing matrix. Hence, the interactions among the queues at each station are through the routing mechanism. 
In recent studies of interacting particle systems, the dynamics at each node is usually a (jump) diffusion, while the interaction is through a ``weighted" mean-field type functional in the drift and/or variance coefficient; see, e.g., 
\cite{BayraktarChakrabortyWu2023graphon,BetCoppiniNardi2020weakly,Lucon2020quenched,CoppiniDeCrescenzoPham2024nonlinear}. 
There is an underlying graph topology in the description of the weights; hence in the dense graph setting, under the proper scaling, the limiting processes can be described as a mean-field type of stochastic process on a graphon (capturing the interacting heterogeneity). However, in our model, under the scaling of the transition probability matrix, its limit is not given by a graphon, but rather a nonnegative kernel function over $[0,1]^2$ that takes values in $\RR_+$ and is not necessarily symmetric, which resembles the heterogeneity effect of a graphon. Hence, we do not work with cut-norm as in the literature of dense graph convergence to graphon but rather involving the $L^1$ convergence for the reflection operator. 
Moreover,  the processes of interest are very different, since the queueing processes are under the fluid scaling (with both time and space being scaled appropriately as described above) while there is not time scaling involved in the literature of interacting particle systems of (jump) diffusions. As a result, the proofs for the convergence of the processes of interest also differ significantly. In this paper convergence is proved by lifting the finite system to an infinite-dimensional representation via an intermediate compensated process instead of the usual method of sampling from the infinite-dimensional system.

There are existing studies on queueing dynamics on graphs. 
In \cite{sankararaman2019interference, banerjee2020ergodicity}, the authors study a queueing model on the finite-dimensional integer grids, with each node having a Poisson arrival and a service rate depending on the workload of neighboring queues under a processor sharing discipline. 
The interactions between neighboring nodes are not of the Jackson network type (nor of the mean-filed type); and the studies focus on stability and steady state analysis. 
In \cite{mandjes2018queues}, the authors study an open Markovian Jackson network of infinite-server queues where the connectivity between each pair of nodes alternates between up and down states. With the number of nodes being fixed, the authors study the transient behaviors of the queueing dynamics including the probability generating functions and diffusion approximations. In contrast to our paper, these models do not consider the behaviors of the networks as the number of nodes grows to infinity. 
Supermarket models on graphs are classical queueing networks in communication systems, where jobs are dispatched to parallel-server networks that may be interconnected via a graph, with the goal of achieving load balancing. 
The classical power-of-two policy is shown to be optimal in a finite system \cite{mitzenmacher2002power}, while there has recently been a stream of studies on large-scale systems as the number of nodes grows to infinity under various graph topologies; see e.g., \cite{mukherjee2018universality,mukherjee2018asymptotically,budhiraja2019supermarket,weng2020optimal,rutten2024mean,goldsztajn2025fluid} and the survey \cite{der2022scalable}. However, in these works, there is no Jackson-type routing, and no infinite-dimensional Skorokhod reflection mapping is involved. 

There has been previous work exploring reflection operators on processes over both time and uncountably infinite dimension $\RR^+\times[0,1]$. Introduced in \cite{nualart1992white} and further studied in several other papers (\cite{zambotti2004occupation,dalang2006hitting,debussche2007conservative,donati1993white,zhang2010white,hambly2019reflected,dalang2013holder}), the work studies solutions of the heat equation on the spatial interval $[0,1]$, with coordinatewise normal reflection to guarantee that solutions remain positive. While we are also working in the spatial interval $[0,1]$, in our work the oblique reflection term is much more general and in fact acts on the system through the reflection operator ${\bf1}-\boldsymbol{F}$.

\subsection{Organization of the Paper}
The paper is organized as follows. In the next subsection,  we summarize the notation and some definitions used throughout the paper. In Section \ref{sec:model}, we first introduce the finite Jackson network model and define the queueing process, and  present the scaling assumptions on the parameters and processes. We then state the main results and discuss their implications. 
In Section \ref{sec:skhmap}, we develop the general theory for infinte-dimensional Skorokhod problems, proving theorems on existence, uniqueness, complementarity, and Lipschitz continuity. In Section \ref{sec:convproof}, we prove the convergence theorems given in Section \ref{sec:model}. Supplementary theory and additional proofs are given in Appendix \ref{sec:appendix}.

\subsection{Definitions and Notation}
We summarize the commonly used notation here. 
Let $\mathcal{D}_T^k = D([0,T], \mathbb{R}^k)$ be the space of c{\`a}dl{\`a}g functions on $[0,T]$ taking values in $\mathbb{R}^k$. For $x\in\mathcal{D}_T^k$, $x$ has components $x_i(t)$, $t\in[0,T]$, $i=1,\dots, k$. 
Let $\mathcal{C}_T^k$ be the subspace of continuous function in $\mathcal{D}_T^k$. For $k=1$, we simply write $\mathcal{C}_T$ and $\mathcal{D}_T$. 
For $x: [a,b] \to \RR$, we define $\|x\|_1 = \int_{[a,b]} |x(u)|du$

{\it Space  $\mathcal{D}_T(L_1)$.} We say $f \coloneq \{f_u(t):u \in [0,1], t \in [0,T]\} \in \mathcal{D}_T(L_1) $ if $f \in D([0,T], L_1[0,1])$, meaning that (i)
$\|f\|_{T,1}<\infty$, where 
$$\|f\|_{T,1}:= \int_0^1\sup_{0 \leq t \leq T} |f_u(t)|\,du,$$ and (ii) for each $u\in[0,1]$, 
$f_u \in \mathcal{D}_T$. 
Note that the dominated convergence theorem guarantees that $[0,T] \ni t \mapsto \{f_u(t) : u\in[0,1]\} \in L_1$ is c{\`a}dl{\`a}g for $f \in \mathcal{D}_T(L_1)$. 
We write $\mathcal{C}_T(L_1)$ to denote the subspace of $\mathcal{D}_T(L_1)$, containing functions that are continuous in $t$, that is, in condition (ii) above,
$f_u \in \mathcal{C}_T$. 
Define $\mathcal{D}_T^\uparrow(L_1) = \{f \in \mathcal{D}_T(L_1): f_u(0) \geq 0, \, df_u(t) \geq 0 \, \forall t\in [0,T], u \in [0,1] \}$, and define $\mathcal{C}^\uparrow_T(L_1)$ analogously. 
For $f,g\in\mathcal{D}_T(L_1)$, we say $f\geq g$ if $f_u(t)\geq g_u(t)$ for all $u\in[0,1]$ and $t\in[0,T]$. 
For $y\in\mathcal{D}_T(L_1)$ and any $t\in[0,T]$, $[f(t)]^+$ is defined for any $u\in[0,1]$ by $[f(t)]^+_u \coloneq [f_u(t)]^+$, where  $[x]^+ \coloneq \max(x,0)$ for $x\in\mathbb{R}$. 

{\it Operators.} For a function $G \colon [0,1]^2 \to \RR_+$, we use the bold symbol to denote the corresponding integral operator $\boldsymbol{G}$ by 
\begin{equation}
\label{eq:Operator}
    (\boldsymbol{G} f)(u) := \int_0^1 G(u,v)f(v)\,dv, \quad f:[0,1] \mapsto\mathbb{R}. 
\end{equation}
When an operator $\boldsymbol{G}$ is given, it should be understood that the non-bold $G$ represents the corresponding kernel function. In the case where we are dealing with functions in $\mathcal{D}_T(L_1)$, it should be understood that the operator acts on the spatial dimension, not the temporal, and can be written as
\begin{equation}
\label{eq:Operator2}
    (\boldsymbol{G} f)_u(t) := \int_0^1 G(u,v)f_v(t)\,dv,\quad f \in \mathcal{D}_T(L_1).
\end{equation}
Operator $\boldsymbol{G}$ is said to be \textit{nonnegative} and we write $\boldsymbol{G} \ge 0$ if $G(u,v)\geq0$ for all $(u,v) \in [0,1] \times [0,1]$.
The transpose $\boldsymbol{G}^T$ of operator $\boldsymbol{G}$ is defined as
\begin{equation}
\label{eq:OperatorTranspose}
    (\boldsymbol{G}^T f)(u) := \int_0^1 G(v,u)f(v)\,dv.
\end{equation}
The $n$-fold composition of operator $\boldsymbol{G}$ is denoted by $\boldsymbol{G}^{(n)}$. 
The operator norm $\|\boldsymbol{G}\|_{op}$ of an integral operator is given by
$$\|\boldsymbol{G}\|_{op} \coloneqq \sup_{v \in [0,1]}\int_0^1  |G(u,v)|du. $$
For integral operators $\boldsymbol{F}_1, \boldsymbol{F}_2$ and $f\in\mathcal{D}_T(L_1)$, we have $\|\boldsymbol{F}_1f\|_{T,1}\le \|\boldsymbol{F}_1\|_{op}\|f\|_{T,1}$ and $\|\boldsymbol{F}_1\boldsymbol{F}_2\|_{op}\le\|\boldsymbol{F}_1\|_{op}\|\boldsymbol{F}_2\|_{op}$. These properties are proved in Lemmas \ref{lem:opnorm} and \ref{lem:compOpNorm} of Appendix \ref{app:useful}.
The identity operator ${\bf1}$ acts on either $L_1$ or $\mathcal{D}_T(L_1)$ through 
\begin{align*}
(\boldsymbol{1}f)(u) := f(u), \quad f:[0,1] \mapsto\mathbb{R}, \\
(\boldsymbol{1}f)_u(t) := f_u(t), \quad f \in \mathcal{D}_T(L_1).
\end{align*}
As $\|\boldsymbol{1}f\|_{T,1} = \|f\|_{T,1}$, we also define $\|{\bf1}\|_{op}=1$.
For functions $f,g \in \mathcal{D}_T(L_1)$, we define $\operatorname{diag}(f)$ by
$(\operatorname{diag}(f)g)_u(t) = f_u(t)g_u(t)$.

Let $\boldsymbol{F}$ be an operator as defined in \eqref{eq:Operator}. 
We say $\boldsymbol{F}\in\mathcal{R}$ if $F\geq0$, $\|\boldsymbol{F}\|_{op}\leq1$, and $\rho(\boldsymbol{F})<1$, where $\rho(\boldsymbol{F})$ denotes the spectral radius of $\boldsymbol{F}$. We call $\mathcal{R}$ the set of reflection operators. 

Given a Polish space $\mathbb{S}$, denote by $\mathcal{P}(\mathbb{S})$ the space of probability measures on $\mathbb{S}$ endowed with the topology of weak convergence, so that it is also a Polish space.
Denote by $W_1$ the Wasserstein-$1$ metric on $\mathcal{P}(\mathcal{D}_T)$, that is,
\begin{equation*}
    W_1(\mu,\nu) := \sup_{f} \left(\int_{\mathcal{D}_T} f \,d\mu - \int_{\mathcal{D}_T} f \,d\nu \right), \quad \mu,\nu \in \mathcal{P}(\mathcal{D}_T),
\end{equation*}
where the supremum is taken over all functions $f \colon \mathcal{D}_T \to \mathbb{R}$ that is $1$-Lipschitz.

\medskip

\section{Model and Main Results}
\label{sec:model}

\subsection{Open Jackson Network Model}
Consider an open Jackson network with $N$ nodes/stations. Jobs can arrive and leave externally while also circulating in the network. Let $A^N_i =\{A^N_i(t): t\ge 0\}$ be the external arrival process at station $i$, $i=1,\dots,N$, which are mutually independent Poisson processes with rates $\lambda_i^N$. The service times in each node are independent and exponentially distributed, with service rate $\mu_i^N$ at node $i=1,\dots,N$. 
The transition probabilities  $p_{ij}^N$ between nodes in the network together with the exit probabilities $p_{i0}^N$ satisfy
$$\sum_{j=1}^N p_{ij}^N = 1 - p_{i0}^N \in [0,1].$$
Note that for open Jackson networks, there exists at least one $i$ such that  $p_{i0}^N>0$.  Denote $P^N := (p^N_{ij})_{i,j=1,\dots,N}$.

Let $Q_i^N = \{Q_i^N(t): t\ge 0)\}$ be the queue length at station $i$, $i=1,\dots,N$. 
Then we can write the dynamics of $Q_i^N$ as
\begin{align} \label{eqn-QiN-closed}
Q_i^N(t) = Q_i^N(0) + A^N_i(t) + \sum_{j=1}^N  \Poisson^N_{ji}(\mu_j^N B_j^N(t)) - \sum_{j=0}^N \Poisson^N_{ij}(\mu_i^N B_i^N(t))
\end{align}
where $\Poisson^N_{ij}=\{\Poisson^N_{ij}(t): t\ge 0\}$, $i=1,\dotsc,N$, $j=0,\dotsc,N$, are mutually independent, rate-$p_{ij}^N$ Poisson processes, and $B_i^N = \{B_i^N(t): t\ge 0\}$ is the cumulative busy time process at station $i$, $i=1,\dots,N$, that is,
\[B^N_i(t) = \int_0^t 1_{\{Q^N_i(s)>0\}} ds, \quad t \ge 0, \, i =1,\dots,N. \]
From this we also have the cumulative idle time $I^N_i(t) = t- B^N_i(t)$ satisfying
\begin{equation}
\label{eq:barINi}
\int_0^\infty 1_{\{Q^N_i(t)>0\}} dI^N_i(t)=0, \quad \forall\,i=1,\dots,N.
\end{equation}

We denote $Q^N = \{Q^N_1,\dots,Q^N_N\}$ and similarly for $B^N$ and $I^N$. It is clear that these processes have sample paths in $\mathcal{D}_T^N$.

In this paper, we consider a large-scale Jackson network model, where the number of nodes/stations $N$ grows to infinity. 
Note that there is implicitly an underlying network topology, that is, a graph with the nodes being the service stations/servers and the edges indicating the connectivity of the nodes (represented as in the routing scheme). The underlying graph is not necessarily complete, but we assume that it is dense (that is, $p_{ij}^N = O(1/N)$ for $j \ne i$ and hence the degree of each note $i$ is $O(N)$). This rules out certain Jackson network models such as tandem networks and star-shaped networks, since they are not dense. See the formal assumptions on the scaling of $P^N$ in the next subsection and also the discussions in Remark \ref{rem:Pii}.

\subsection{Scaling Assumptions } 

We let 
\begin{equation} \label{eq:p-G-def}
p_{ij}^N =  \frac{1}{N} G_{ij}^N, \quad i,j=1,\dots,N,
\end{equation}
where $G^N=(G^N_{ij})$ satisfies
\begin{equation} \label{eq:GNsumN}
\sum_{j=1}^N G_{ij}^N \leq N.
\end{equation}
We will impose a series of assumptions on $G^N$ in the following. 

Abusing notation, we define  $G^N$ as a block-wise constant function defined on $[0,1] \times [0,1]$:
\begin{equation} \label{eqn-GN-uv}
G^N(u,v) = \sum_{i=1}^N \sum_{j=1}^N G^N_{ij} {\bf1}_{u \in K^N_i} {\bf1}_{v\in K^N_j},
\end{equation}
where $K^N_i, i=1,\dots, N$, is a partition of $[0,1]$, satisfying $\int_0^1{\bf1}_{v\in K^N_i}dv=|K^N_i|=\frac{1}{N}$. For simplicity we assume that $K^N_1 = [0,1/N]$ and $K^N_i=(\frac{i-1}{N},\frac{i}{N}]$ for $2 \le i \le N$. We denote $\boldsymbol{G}^N$ as the operator corresponding to $G^N:[0,1]^2\to \RR_+$, as defined in \eqref{eq:Operator}, and also $(\boldsymbol{G}^N)^T$ its transpose. 

We first assume the following.
\begin{assumption} \label{assumption:GNGop}
For each $N$ we have $\rho(P^N)<1$, or equivalently, $\rho(G^N)<N$. Furthermore, 
there exists some $G:[0,1]^2\to \RR_+$ with corresponding operator $\boldsymbol{G}$ such that for some constants $C>0$ and $\alpha >0$,  
\begin{equation}
\label{eq:GntoG}
\|(\boldsymbol{G}^N)^T-\boldsymbol{G}^T\|_{op} \leq CN^{-\alpha/2},
\end{equation}
where $\boldsymbol{G}^T \in\mathcal{R}$, that is, the spectral radius $\rho(\boldsymbol{G}^T)<1$  and $\|\boldsymbol{G}^T\|_{op}\le 1$, or equivalently for all $u\in[0,1]$, 
\begin{equation}
\label{eq:G-int1}
\int_0^1 G(u,v)\,dv \leq 1.
\end{equation}
\end{assumption}

The following lemma guarantees that the blockwise constant operator associated with the transition matrix satisfies the same conditions as the limiting object. Its proof is given in Appendix \ref{sec:additional-pf}

\begin{lemma} \label{lem:GNinR}
Under Assumption \ref{assumption:GNGop}, we have $(\boldsymbol{G}^N)^T\in\mathcal{R}$.
\end{lemma}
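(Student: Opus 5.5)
Three properties of $(\boldsymbol{G}^N)^T$ must be checked: nonnegativity, $\|(\boldsymbol{G}^N)^T\|_{op}\le 1$, and $\rho((\boldsymbol{G}^N)^T)<1$. Nonnegativity is immediate, since $G^N_{ij}=Np^N_{ij}\ge0$ and \eqref{eqn-GN-uv} is a nonnegative combination of indicators. For the operator norm, the kernel of $(\boldsymbol{G}^N)^T$ is $(u,v)\mapsto G^N(v,u)$. Hence
\[
\|(\boldsymbol{G}^N)^T\|_{op}=\sup_{v}\int_0^1 G^N(v,u)\,du .
\]
For $v\in K^N_i$ this integral equals $\sum_j G^N_{ij}|K^N_j|=\frac1N\sum_j G^N_{ij}\le 1$ by \eqref{eq:GNsumN}. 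So the norm bound holds directly from the row-sum condition, and the approximation estimate \eqref{eq:GntoG} is not needed for either of these two properties.

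The main step is the spectral radius. I plan to identify the spectrum of the integral operator $(\boldsymbol{G}^N)^T$ on $L_1[0,1]$ with that of the matrix $(G^N)^T/N=(P^N)^T$, together with $0$. Let $\Pi$ be the averaging map $L_1\to\RR^N$, $(\Pi f)_j=N\int_{K^N_j}f$, and let $E:\RR^N\to L_1$ be the map $x\mapsto\sum_j x_j{\bf1}_{K^N_j}$. Then the operator factors as $(\boldsymbol{G}^N)^T=E\,\big((P^N)^T\big)\,\Pi$; concretely,
\[
\big((\boldsymbol{G}^N)^Tf\big)(u)=\sum_{i,j}G^N_{ji}{\bf1}_{u\in K^N_i}\int_{K^N_j}f .
\]
Since $\Pi E=I_N$, the operator is finite rank, and its nonzero spectrum consists of eigenvalues. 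Any nonzero eigenvalue $\lambda$ with eigenfunction $f$ forces $f=E x$ with $x=\Pi f\ne0$ and $(P^N)^Tx=\lambda x$; conversely, each eigenvector of $(P^N)^T$ lifts through $E$. (Equivalently, the nonzero spectra of $AB$ and $BA$ coincide.) Therefore
\[
\rho((\boldsymbol{G}^N)^T)=\rho((P^N)^T)=\rho(P^N)<1
\]
by Assumption \ref{assumption:GNGop}.

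The obstacle is mostly bookkeeping. First, I must confirm that the paper's spectral radius is the usual one on $L_1$, or equivalently $\lim\|\cdot^{(n)}\|_{op}^{1/n}$ via Gelfand's formula. In the latter form the argument is again direct: $\big((\boldsymbol{G}^N)^T\big)^{(n)}=E\,\big((P^N)^T\big)^n\,\Pi$, and its operator norm equals the maximum column sum of $((P^N)^T)^n$, i.e. $\|(P^N)^n\|_\infty$. Taking $n$-th roots gives $\rho(P^N)<1$. Second, I must check carefully that the operator-norm definition in the paper (a supremum over $v$ of integrals in $u$) matches the induced $L_1$ norm. This is what lets composition with $E$ and $\Pi$ transfer the norms exactly.
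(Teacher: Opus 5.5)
Your proposal is correct and follows the paper's route. The operator-norm bound comes from the row-sum condition \eqref{eq:GNsumN} exactly as in the paper, and the spectral radius is handled by reducing eigenfunctions of $(\boldsymbol{G}^N)^T$ to eigenvectors of $(G^N)^T/N=(P^N)^T$ through block averages, which is what the paper does with its vector $\mathbf{c}$. Your factorization $(\boldsymbol{G}^N)^T=E\,(P^N)^T\,\Pi$ with $\Pi E=I_N$ also makes explicit two points the paper leaves implicit: the nonzero spectrum of this finite-rank operator consists only of eigenvalues, and the block-average vector of an eigenfunction with nonzero eigenvalue is itself nonzero.
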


\begin{remark}
Under equation \eqref{eq:GntoG} it follows that for sufficiently large $N$ we will have $\rho(P^N)<1$ and therefore $\rho((\boldsymbol{G}^N)^T)<1$ so that our later convergence results still hold. However, for ease of exposition, we directly assume $\rho(P^N)<1$ for all $N$.
\end{remark}

\begin{remark}
For Jackson networks, the transition probability from node $i$ to node $j$ may not be equal to that from node $j$ to node $i$. Hence, in our setting, $G$ may be asymmetic.  
Since graphon kernel functions are usually assumed to be symmetric and the so-called cut metric is necessarily used (see \cite{lovasz2012large}),  
 we refrain from calling the function $G$ a graphon kernel function.  

We also remark that in the graphon particle system of diffusions, e.g.\ in \cite{BayraktarChakrabortyWu2023graphon}, the interaction appears in the drift in the form $\frac{1}{N}\sum_{j=1}^N \xi_{ij}^N b(X_i^N,X_j^N)$, and similarly in the diffusion coefficient. Here, $\xi_{ij}^N\in[0,1]$ represents the connectivity between particles $i$ and $j$. Our current setup treats $G_{ij}^N$ as a kernel, which is not necessarily bounded by $1$, but instead satisfies \eqref{eq:GNsumN}.
\end{remark}

\begin{remark}
    Here we start from a finite system with $G^N$ and assume that it will converge to a limiting kernel function $G$ as given in Assumption \ref{assumption:GNGop}.
    One may also start from a kernel function $G$, and construct $G^N$ by a random sampling approach as done in  the literature of graphon particle systems (see, e.g., \cite{BayraktarChakrabortyWu2023graphon,BetCoppiniNardi2020weakly,Lucon2020quenched,CoppiniDeCrescenzoPham2024nonlinear}).
    This gives a way to construct a sequence of growing network models consistently in the statistical sense.
    We refer to \cite{Whitt1984open} 
    for a different way of constructing consistently growing closed Jackson networks in the sparse setting. 
\end{remark}

\begin{remark}  \label{rem:Pii}
    Assumption \ref{assumption:GNGop} on the scaling of $\{p_{ij}^N\}$ (as defined in \eqref{eq:p-G-def}) rules out the cases where $p_{ii}^N = O(1)$ for certain $i$'s.
    For example, if $p_{ii}^N \equiv 0.7$ and $p_{ij}^N = \frac{0.3}{N-1}$, then one would expect $G(u,v) \equiv 0.3$ for any $u\neq v$ and still have $\int_0^1 G(u,v)dv\le 1$ for any value of $G(u,u) \in \RR_+$. In this case, however, we do not have convergence of $\boldsymbol{G}^N$ to $\boldsymbol{G}$ as specified in the assumption. 
    This goes beyond the scope of the current work, and we leave it as a future work.
\end{remark}

\subsection{Convergence}

We define the following scaled queueing processes (with similar definitions for $\bar{B}^N_i$, $\bar{I}^N_i$ and $\bar{A}^N_i$):
\begin{equation}
\label{eq: scaledprocess}
    \bar{Q}_i^N(t) := 
\frac{Q_i^N(N^\alpha t)}{N^\alpha}, \quad t\ge 0.
\end{equation}
Recall that $\alpha>0$ already appears in Assumption \ref{assumption:GNGop}.
We also define 
\begin{equation}\label{initQinft}
 \bar{Q}_u^N(0) =   \sum_{i=1}^N \bar{Q}_i^N(0)  {\bf1}_{u \in K^N_i}, \quad u \in [0,1].
\end{equation}
Similarly, define the quantities $\lambda^N_u, \mu^N_u$ for each $u \in [0,1]$, that is,
\[
\lambda^N_u = \sum_{i=1}^N \lambda^N_i  {\bf1}_{u \in K^N_i}, \quad \mu^N_u = \sum_{i=1}^N \mu^N_i  {\bf1}_{u \in K^N_i}, \quad u \in [0,1].
\]

We make the following assumption on the primitives $\bar{Q}^N_u(0)$, $\lambda^N_u$ and $\mu^N_u$. 

\begin{assumption}
\label{assumption:Nbound}
There exist deterministic and nonnegative functions $\bar{Q}_u(0)$, $\lambda_u$ and $\mu_u$  in $L_1[0,1]$ such that $\mu_u>0$ for all $u \in [0,1]$, and 
for some $C>0$, 
$$\max\big\{\|\bar{Q}^N(0)-\bar{Q}(0)\|_1, \|\lambda^N-\lambda\|_1, \|\mu^N-\mu\|_1\big\} \leq C N^{-\alpha/2}.$$
\end{assumption}

\begin{definition}  \label{def:GFM}
The infinite-dimensional fluid model is defined as follows: 
\begin{equation}
    \label{eq:Qbar-new}
    \bar{Q}(t) =    \bar{X}(t) + ({\bf1}-\boldsymbol{G}^T) \bar{Y}(t),
\end{equation}
where 
\begin{equation}
    \label{eq:Xbar-new}
  \bar{X}(t) =\bar{Q}(0) + \big[\lambda  - ({\bf1} - \boldsymbol{G}^T)\mu \big]  t,
\end{equation}
\begin{equation}
    \label{eq:Ybar-new}
\bar{Y}(t)=\operatorname{diag}(\mu) \bar{I}(t),
\end{equation}
and $ \bar{I}(t)$ satisfies 
\begin{equation}
\label{eq:idletime}
d\bar{I}_u(t)\geq0\,, \quad \bar{I}_u(0) = 0\,, \mbox{ and } \int_0^\infty 1_{\{\bar{Q}_u(s)>0\}} \bar{I}_u(ds)=0, \quad \forall\,u \in [0,1].
\end{equation}
We denote $\bar{Q} =\Phi_{\boldsymbol{G}^T}(\bar{X}) $ and $\bar{Y} =\Psi_{\boldsymbol{G}^T}(\bar{X})$ and call them as the reflected process and regulator, respectively. It is apparent that $\bar{Q}(0)\ge0$ and $\bar{X}\in\mathcal{C}_T(L_1)$. 
\end{definition}

Equation \eqref{eq:Qbar-new} can be explicitly written as
\begin{equation}
    \label{eq:Qbar}
    \bar{Q}_u(t) = \bar{X}_u(t) + \left( \mu_u\bar{I}_u(t) - \int_0^1 \mu_v \bar{I}_v(t) G(v,u) \,dv \right),
\end{equation}
where 
\begin{equation}\label{eq:Xbar}
\bar{X}_u(t) = \bar{Q}_u(0) + \lambda_u t -\left(\mu_u  - \int_0^1 \mu_v  G(v,u) \,dv \right)t. 
\end{equation}
Equivalently, the above can be rewritten as 
\begin{equation}
     \label{eq:Qbar1}
    \bar{Q}_u(t) = \bar{Q}_u(0) + \lambda_u t - \mu_u (t - \bar{I}_u(t)) + \int_0^1 \mu_v \left(t-\bar{I}_v(t)\right) G(v,u) \,dv.
\end{equation}

\begin{prop}
    \label{prop:Lipschitz}
Under the conditions on $\boldsymbol{G}$ in Assumption \ref{assumption:GNGop}, the infinite-dimensional fluid limit in Definition \ref{def:GFM} has a unique solution $(\bar{Q}, \bar{Y}) \in \mathcal{C}_T(L_1) \times \mathcal{C}^\uparrow_T(L_1)$ given  $\bar{X}$.  
Moreover, 
the mappings $\Phi_{\boldsymbol{G}^T}$ and $\Psi_{\boldsymbol{G}^T}$ are Lipschitz continuous. 
\end{prop}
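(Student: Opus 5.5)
This proposition is the specialization of the general infinite-dimensional Skorokhod theory developed in Section \ref{sec:skhmap} to the reflection operator $\boldsymbol{F}=\boldsymbol{G}^T$ and the free process $\bar X$ of \eqref{eq:Xbar-new}. So the proof reduces to checking hypotheses and then handling the change of variables $\bar Y=\operatorname{diag}(\mu)\bar I$. By Assumption \ref{assumption:GNGop} we have $\boldsymbol{G}^T\in\mathcal{R}$, i.e. $G\ge0$, $\|\boldsymbol{G}^T\|_{op}\le1$ (this is \eqref{eq:G-int1}) and $\rho(\boldsymbol{G}^T)<1$.

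We next check that $\bar X\in\mathcal{C}_T(L_1)$. Each $\bar X_u(\cdot)$ is affine in $t$. Moreover
\[
\|\bar X\|_{T,1}\le \|\bar Q(0)\|_1+T\bigl(\|\lambda\|_1+\|\mu\|_1+\|\boldsymbol{G}^T\mu\|_1\bigr)\le \|\bar Q(0)\|_1+T\bigl(\|\lambda\|_1+2\|\mu\|_1\bigr),
\]
which is finite because $\bar Q(0),\lambda,\mu\in L_1$ (Assumption \ref{assumption:Nbound}). Since $\bar X(0)=\bar Q(0)\ge0$, the general existence and uniqueness result applies. That result identifies the regulator as the unique fixed point in $\mathcal{D}^\uparrow_T(L_1)$ of the map of Definition \ref{def:pidefD}, namely
\[
\pi(y)(t)=\sup_{s\le t}\bigl[-\bar X(s)-\boldsymbol{G}^T y(s)\bigr]^+ \quad\text{(componentwise)}.
\]
This is Theorem \ref{thm:unqfixpoint}, which shows the fixed point is the pointwise infimum of the regulator set. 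Theorem \ref{thm:compchar} then gives the equivalence with the complementarity formulation: there is a unique pair $(\bar Q,\bar Y)$ with $\bar Q=\bar X+({\bf1}-\boldsymbol{G}^T)\bar Y\ge0$, $\bar Y$ nondecreasing with $\bar Y(0)=0$, and $\int 1_{\{\bar Q_u>0\}}d\bar Y_u=0$.

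We then translate to $\bar I$ by setting $\bar I_u=\bar Y_u/\mu_u$. This is well defined since $\mu_u>0$ for every $u$. Because $\mu_u$ does not depend on $t$, the monotonicity, initial condition and complementarity \eqref{eq:idletime} for $\bar I$ are equivalent to those for $\bar Y$. Hence solutions of Definition \ref{def:GFM} correspond bijectively to solutions of the Skorokhod problem, and uniqueness transfers.

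For continuity in time we use the fixed-point representation. If $y$ is continuous in $t$, then so is $\boldsymbol{G}^T y$: dominated convergence in $v$ gives this for each $u$, with domination by $\sup_t|y_v(t)|\in L_1$. The running supremum of a continuous function is continuous. So $\pi$ preserves $\mathcal{C}^\uparrow_T(L_1)$, and the fixed point, obtained as a monotone or iterative limit starting from $0$, stays in this closed class. Alternatively, one can use the Lipschitz bound restricted to a horizon $[s,t]$ to transfer the continuity of $\bar X$ to $\bar Y$. Either way $\bar Y\in\mathcal{C}^\uparrow_T(L_1)$, and then $\bar Q\in\mathcal{C}_T(L_1)$.

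Lipschitz continuity of $\Psi_{\boldsymbol{G}^T}$ and $\Phi_{\boldsymbol{G}^T}$ in $\|\cdot\|_{T,1}$ follows from Theorem \ref{thm:kconvergence} (built on Theorem \ref{thm:lipschitz} and Lemma \ref{lem:krefbound}). The needed input is $\rho(\boldsymbol{G}^T)<1$: by Gelfand's formula this yields some $k$ with $\|(\boldsymbol{G}^T)^{(k)}\|_{op}<1$. This in turn gives $\|\Psi(x_1)-\Psi(x_2)\|_{T,1}\le C_k\|x_1-x_2\|_{T,1}$, and then $\|\Phi(x_1)-\Phi(x_2)\|_{T,1}\le(1+\|{\bf1}-\boldsymbol{G}^T\|_{op}C_k)\|x_1-x_2\|_{T,1}$.

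The only step that is not pure citation is the time-continuity of the solution. If the general theorems are stated only in $\mathcal{D}_T(L_1)$, I would handle it via the fixed-point iteration argument above.
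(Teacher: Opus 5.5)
Your existence/uniqueness part (Theorems \ref{thm:unqfixpoint} and \ref{thm:compchar}, i.e.\ Theorem \ref{thm:summary}, plus the rescaling $\bar I=\bar Y/\mu$) and your Lipschitz part are what the paper does; it simply cites Theorem \ref{thm:summary} and Theorem \ref{thm:lipschitz}. There is one slip: Definition \ref{def:pidefD} is $\sup_{s\le t}[-\bar X(s)+\boldsymbol{G}^T y(s)]^+$, and with your minus sign the map is order-reversing, so the monotone iteration would not apply.

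You diverge from the paper at time continuity, and there the claim ``the fixed point \dots\ stays in this closed class'' is not justified. The iterates $\pi^n(0)$ increase to $W^*=\Psi_{\boldsymbol{G}^T}(\bar X)$. However, an increasing limit of continuous nondecreasing paths need not be continuous, e.g.\ $\min\{1,n(t-\frac12)^+\}\uparrow 1_{\{t>1/2\}}$. The convergence you do have, from Lemma \ref{lem:pikcontraction}, is in $\|\cdot\|_{T,1}$. Under that norm $\mathcal{C}_T(L_1)$ is not closed, because it demands continuity for every $u$ while $\|\cdot\|_{T,1}$ cannot see null sets of $u$.

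The repair is short:
\begin{itemize}
\item Since $W^*-\pi^n(0)\ge0$ decreases in $n$ and $\|W^*-\pi^n(0)\|_{T,1}\to0$, monotone convergence gives $\sup_{t\le T}|W^*_u(t)-\pi^n(0)_u(t)|\to0$ for a.e.\ $u$. Hence $W^*_u$ is continuous for a.e.\ $u$.
\item Then $(\boldsymbol{G}^TW^*)_u(\cdot)$ is continuous for every $u$ by dominated convergence, since the null set is invisible to the integral.
\item The identity $W^*=\pi(W^*)$ then upgrades continuity to every $u$.
\end{itemize}
Your alternative via the Lipschitz bound on time windows does not do this by itself. An estimate in $\|\cdot\|_{T,1}$ controls $t\mapsto\bar Y(t)$ as an $L_1$-valued path; it does not rule out jumps of individual paths $\bar Y_u$ at $u$-dependent times. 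For that you would need the pointwise comparison from inside the proof of Theorem \ref{thm:lipschitz}.

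The paper instead uses Lemma \ref{lem:contXcontY}, which involves no limit at all. If some $\bar Y_u$ jumped at $t$, consider the jump profile $\Delta=\bar Y(t)-\bar Y(t-)\ge0$. Complementarity forces $\bar Q_u(t)=0$ wherever $\Delta_u>0$, and $\bar X$ has no jumps, so $({\bf1}-\boldsymbol{G}^T)\Delta\le0$. Hence $\Delta\le0$ by positivity of $({\bf1}-\boldsymbol{G}^T)^{-1}=\sum_n(\boldsymbol{G}^T)^{(n)}$. That argument is shorter and uses only complementarity and the Neumann series.

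Your repaired iteration argument is more constructive. Write $\omega(\cdot,h)$ for the modulus of continuity and propagate it through $\pi$: the running supremum does not increase it, and $\boldsymbol{G}^T$ acts on it pointwise. This gives $\omega(\bar Y_u,h)\le\big(({\bf1}-\boldsymbol{G}^T)^{-1}\omega(\bar X,h)\big)_u$ for a.e.\ $u$. That is quantitative time regularity (here even Lipschitz in $t$, since $\bar X$ is affine), which the contradiction argument does not provide.
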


\begin{proof}
The existence and uniqueness result follows from Theorem \ref{thm:summary}. Since $\bar{X}\in\mathcal{C}_T(L_1)$, $(\bar{Q}, \bar{Y}) \in \mathcal{C}_T(L_1) \times \mathcal{C}^\uparrow_T(L_1)$ follows from Lemma \ref{lem:contXcontY}. Finally, the Lipschitz continuity property follows from Theorem \ref{thm:lipschitz}.  
\end{proof}

The following Theorem and Corollaries give convergence results from the finite system to its infinite dimensional approximation. Their proofs are given in Section \ref{sec:convproof}.

\begin{theorem} \label{thm-main}
Under Assumptions \ref{assumption:GNGop} and \ref{assumption:Nbound}, 
\begin{equation} \label{eqn:main1}
\frac{1}{N} \sum_{i=1}^N \E \sup_{0 \le t \le T} \bigg|\bar{Q}_i^N(t)-N\int_{(i-1)/N}^{i/N}\bar{Q}_u(t)\,du\bigg|\leq \kappa_T N^{-\alpha/2},
\end{equation}
for some constant $\kappa_T>0$. 
\end{theorem}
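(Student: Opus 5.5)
The plan follows the strategy outlined in the introduction: introduce a deterministic intermediate process, lift it to $\mathcal{D}_T(L_1)$, and split the error into two pieces. First I rewrite the scaled dynamics \eqref{eqn-QiN-closed} in fluid scale:
\begin{equation*}
\bar Q_i^N(t)=\bar X_i^N(t)+\mu_i^N\bar I_i^N(t)-\sum_{j=1}^N p^N_{ji}\mu_j^N\bar I_j^N(t)+\bar M_i^N(t),
\end{equation*}
where $\bar X_i^N(t)=\bar Q_i^N(0)+\big[\lambda_i^N-\mu_i^N+\sum_j p_{ji}^N\mu_j^N\big]t$. Here $\bar M_i^N$ collects the compensated Poisson terms: arrivals, routing into $i$ and routing out of $i$, each evaluated at the busy time and divided by $N^\alpha$. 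Lifting blockwise via $K_i^N$ and using $p_{ji}^N=G^N_{ji}/N$, the sum $\sum_j p^N_{ji}\mu^N_j\bar I^N_j$ becomes $((\boldsymbol G^N)^T\operatorname{diag}(\mu^N)\bar I^N)_u$. Hence the lifted process $\bar Q^N_u$ solves the infinite-dimensional Skorokhod problem with reflection operator ${\bf 1}-(\boldsymbol G^N)^T$, which lies in $\mathcal R$ by Lemma \ref{lem:GNinR}. Its free process is $\bar X^N+\bar M^N$, and its regulator is $\operatorname{diag}(\mu^N)\bar I^N$; this regulator increases only where $\bar Q^N_u=0$, by \eqref{eq:barINi}. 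The left side of \eqref{eqn:main1} is bounded by $\E\int_0^1\sup_t|\bar Q^N_u(t)-\bar Q_u(t)|du$, because averaging over $K_i^N$ is a contraction.

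\textbf{Intermediate process.} I define $\tilde Q^N=\Phi_{(\boldsymbol G^N)^T}(\bar X^N)$, the solution with the martingale removed. I then bound $\bar Q^N-\tilde Q^N$ and $\tilde Q^N-\bar Q$ separately.

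For the first term, Lipschitz continuity of $\Phi_{(\boldsymbol G^N)^T}$ (Theorem \ref{thm:lipschitz}) gives
\begin{equation*}
\|\bar Q^N-\tilde Q^N\|_{T,1}\le C\|\bar M^N\|_{T,1}.
\end{equation*}
The constant must be uniform in $N$, which is what Lemma \ref{lem:opseqnorm} and Theorem \ref{thm:kconvergenceseq} supply for a convergent sequence of operators. Next, $\|\bar M^N\|_{T,1}=\frac1N\sum_i\sup_t|\bar M^N_i(t)|$. For each $i$, Doob's $L^2$ inequality bounds $\E\sup_t|\bar M_i^N(t)|$ by $N^{-\alpha}\big(C N^\alpha(\lambda_i^N+\mu_i^N\sum_j p_{ij}^N+\sum_j p_{ji}^N\mu_j^N+\mu_i^N)T\big)^{1/2}$, since the busy time is at most $N^\alpha T$. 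Jensen's inequality then moves the average over $i$ inside the square root. The sum $\frac1N\sum_i\sum_j p^N_{ji}\mu^N_j$ is bounded by $\|\mu^N\|_1$ using \eqref{eq:GNsumN}, and $\|\lambda^N\|_1,\|\mu^N\|_1$ are bounded by Assumption \ref{assumption:Nbound}. This yields $\E\|\bar M^N\|_{T,1}\le C_TN^{-\alpha/2}$, which is the content of Lemma \ref{lem:compbound}.

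For the second term, Theorem \ref{thm:kconvergenceseq} bounds $\|\tilde Q^N-\bar Q\|_{T,1}$ by a constant times
\begin{equation*}
\|\bar X^N-\bar X\|_{T,1}+\|(\boldsymbol G^N)^T-\boldsymbol G^T\|_{op}\,\|\bar Y\|_{T,1}.
\end{equation*}
It remains to estimate $\|\bar X^N-\bar X\|_{T,1}$. I split
\begin{equation*}
(\boldsymbol G^N)^T\mu^N-\boldsymbol G^T\mu=(\boldsymbol G^N)^T(\mu^N-\mu)+((\boldsymbol G^N)^T-\boldsymbol G^T)\mu,
\end{equation*}
and bound both pieces by $CN^{-\alpha/2}$ using \eqref{eq:GntoG} and Assumption \ref{assumption:Nbound}. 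With $\|(\boldsymbol G^N)^T\|_{op}\le1$, this gives $\|\bar X^N-\bar X\|_{T,1}\le C(1+T)N^{-\alpha/2}$. The factor $\|\bar Y\|_{T,1}$ is finite by Proposition \ref{prop:Lipschitz}, so this term is deterministic and of order $N^{-\alpha/2}$ (Lemma \ref{lem:tQbQ-diff}). Adding the two pieces proves \eqref{eqn:main1}.

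The main obstacle is the stability estimate for the Skorokhod map under perturbation of the reflection operator with a constant uniform in $N$. The Lipschitz constant depends on bounds for $\sum_k\|((\boldsymbol G^N)^T)^{(k)}\|_{op}$, and these must hold uniformly as $(\boldsymbol G^N)^T\to\boldsymbol G^T$. I would handle this as in Lemma \ref{lem:opseqnorm}. Since $\rho(\boldsymbol G^T)<1$, some power satisfies $\|(\boldsymbol G^T)^{(k)}\|_{op}<1$. Operator-norm convergence then gives $\|((\boldsymbol G^N)^T)^{(k)}\|_{op}\le\theta<1$ for all large $N$, and the finitely many small $N$ are absorbed into the constant.
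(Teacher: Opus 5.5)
Your proof is correct. Its architecture is the paper's: strip the compensated Poisson terms to get a deterministic intermediate process, bound the martingale part by Doob plus Jensen, and compare the intermediate and limiting processes via Theorem \ref{thm:kconvergenceseq}.

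The genuine difference is in the stochastic-versus-intermediate step.

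The paper keeps that comparison finite-dimensional. It solves the $N$-dimensional Skorokhod problem for $\breve Q^N$ and invokes the finite-dimensional Lipschitz bound of Lemma \ref{lem:compqueue}, adapted from Whitt. In Whitt's form the constant grows like $N/(1-\gamma)$, so the appendix has to justify a uniform constant by identifying powers of $P^N$ with compositions of the block operator $(\boldsymbol G^N)^T$. Only the deterministic $\breve Q^N$ is then lifted, which produces a three-term split with a vanishing middle term.

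You instead lift the stochastic system itself. You note, via \eqref{eq:barINi} and the uniqueness in Theorem \ref{thm:summary}, that it equals $\Phi_{(\boldsymbol G^N)^T}$ applied to the lifted free process. You then apply the infinite-dimensional Theorem \ref{thm:lipschitz} pathwise, with uniformity in $N$ supplied by Lemma \ref{lem:opseqnorm}.

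Your route is shorter and self-contained. The norm $\|\cdot\|_{T,1}$ of a lifted process is exactly the averaged quantity $\frac1N\sum_i\sup_t|\cdot|$ in \eqref{eqn:main1}, so no transfer of a finite-dimensional result stated in a different norm is needed. Lemma \ref{lem:compqueue} in effect becomes a corollary of Theorem \ref{thm:lipschitz}, and the decomposition collapses to two terms. Both routes rest on the same fact: one fixed $k$ gives $\|((\boldsymbol G^N)^T)^{(k)}\|_{op}\le\gamma$ for all large $N$.

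One small citation slip: Theorem \ref{thm:kconvergenceseq} gives the factor $\|\bar X\|_{T,1}$, not $\|\bar Y\|_{T,1}$, in front of $\|(\boldsymbol G^N)^T-\boldsymbol G^T\|_{op}$. Your version is also true, because $(\bar Q,\bar Y)$ solves the problem for $(\boldsymbol G^N)^T$ with free process $\bar X+((\boldsymbol G^N)^T-\boldsymbol G^T)\bar Y$. Either factor is a finite constant, so the conclusion is unaffected.
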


\begin{coro} \label{coro:bQNbQ-conv0}
    Suppose that 
    the set of discontinuities of the mapping $[0,1] \ni u \mapsto \{\bar{Q}_u(t) : t \in [0,T]\} \in \mathcal{C}_T$ has Lebesgue measure $0$, $\|\boldsymbol{G}\|_{op}<\infty$, and that $\lambda$ and $\mu$ are bounded. Then, under the same assumptions in Theorem \ref{thm-main},
    \begin{equation}
        \label{eq:cvg-coupled}
        \frac{1}{N} \sum_{i=1}^N \E \sup_{0 \le t \le T} |\bar{Q}_i^N(t)-\bar{Q}_{i/N}(t)| \to 0, \quad \mbox{ as } N \to \infty.
    \end{equation}
\end{coro}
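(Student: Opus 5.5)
By the triangle inequality, for each $i$,
\[
\sup_{t}|\bar{Q}_i^N(t)-\bar{Q}_{i/N}(t)| \le \sup_t\Big|\bar{Q}_i^N(t)-N\int_{K^N_i}\bar{Q}_u(t)\,du\Big| + N\int_{K^N_i}\sup_t|\bar{Q}_u(t)-\bar{Q}_{i/N}(t)|\,du .
\]
Averaging over $i$ and taking expectations, the first term is at most $\kappa_T N^{-\alpha/2}\to 0$ by Theorem \ref{thm-main}. The corollary therefore reduces to a deterministic statement about the fluid limit:
\[
R_N:=\sum_{i=1}^N\int_{K^N_i}\|\bar{Q}_u-\bar{Q}_{i/N}\|_{\infty}\,du\to0,
\]
where $\|\cdot\|_\infty$ is the sup norm on $\mathcal{C}_T$. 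This is a Riemann-sum-type approximation of the map $u\mapsto\bar{Q}_u\in\mathcal{C}_T$ by its values at the right endpoints $i/N$.

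Define $h_N(u):=\|\bar{Q}_u-\bar{Q}_{\lceil Nu\rceil/N}\|_\infty$, with the obvious adjustment on $K^N_1$. Then $R_N=\int_0^1 h_N(u)\,du$. Let $u$ be a continuity point of $u\mapsto\bar{Q}_u$. Since $\lceil Nu\rceil/N\to u$, we get $h_N(u)\to0$. By hypothesis the discontinuity set has Lebesgue measure zero, so $h_N\to0$ almost everywhere.

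To pass the limit inside the integral by dominated convergence, I need a bound on $\sup_{u}\|\bar{Q}_u\|_\infty$, since $h_N(u)\le 2\sup_{u}\|\bar{Q}_u\|_\infty$. This uniform bound is the main obstacle. Membership $\bar{Q}\in\mathcal{C}_T(L_1)$ only gives integrability in $u$, while evaluation at the grid points $i/N$ needs control at every point.

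To obtain the bound I would use \eqref{eq:Qbar} together with the structure of the regulator.
\begin{itemize}
\item The regulator satisfies $0\le\bar{Y}\le\Psi$, where $\Psi$ is given through the fixed-point/infimum characterization of Theorem \ref{thm:unqfixpoint}. Concretely, $\bar Y$ is the minimal fixed point of $y\mapsto \boldsymbol{G}^Ty+\sup_{s\le t}[-\bar X(s)]^+$ (componentwise), so iterating gives
\[
\bar{Y}\le\sum_n(\boldsymbol{G}^T)^{(n)}\sup_{s\le\cdot}[-\bar{X}(s)]^+ .
\]
\item The key difficulty is showing this Neumann series is bounded pointwise. I would use $\|\boldsymbol{G}\|_{op}<\infty$, which gives $\sup_u\int_0^1G(u,v)\,dv$-type row control for $\boldsymbol{G}^T$ acting on $L_\infty$ functions, combined with \eqref{eq:G-int1} and $\rho<1$.
\item Specifically, I expect $\|(\boldsymbol{G}^T)^{(n)}\|_{L_\infty\to L_\infty}\le\|\boldsymbol{G}\|_{op}$-type bounds on one factor, with the remaining factors giving geometric decay via Lemma \ref{lem:krefbound}.
\end{itemize}

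Boundedness of $\lambda$ and $\mu$ makes $\bar{X}$ bounded in $u$ apart from $\bar{Q}(0)$. I would therefore either assume, or argue, that $\bar{Q}(0)$ is bounded; continuity almost everywhere might be used here. If a uniform bound fails, the fallback is to dominate via $\|\bar{Q}_{u}\|_\infty\le f(u)$ with $f$ integrable. I would then handle the grid-point term $N\int_{K^N_i}\|\bar Q_{i/N}\|_\infty du=\frac1N\sum_i\|\bar Q_{i/N}\|_\infty$ through a uniform-integrability argument.

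Once domination holds, dominated convergence gives $R_N\to0$, which completes \eqref{eq:cvg-coupled}.
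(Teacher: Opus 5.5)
Your skeleton is the paper's: Theorem~\ref{thm-main} removes the stochastic part, $h_N\to0$ a.e.\ by a.e.\ continuity, and dominated convergence finishes once $\sup_u\sup_{t\le T}\bar Q_u(t)<\infty$. The gap is in how you propose to get that uniform bound.

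\textbf{The Neumann-series route is not supported.} The inequality $\bar Y\le\widetilde W:=\sum_n(\boldsymbol{G}^T)^{(n)}\sup_{s\le\cdot}[-\bar X(s)]^+$ is fine: it follows from Theorem~\ref{thm:exrefmap} plus minimality of the regulator. (Note that $\bar Y$ is not the minimal fixed point of $y\mapsto\boldsymbol{G}^Ty+\sup_{s\le\cdot}[-\bar X(s)]^+$; that fixed point is $\widetilde W$.) The trouble is bounding $\widetilde W$ pointwise. The geometric decay available (Lemma~\ref{lem:krefbound} and the bound on $(\mathbf{1}-\boldsymbol{F})^{-1}$) is decay of $\|\cdot\|_{op}$, which is the $L_1\to L_1$ norm. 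On bounded functions, $\boldsymbol{G}^T$ has norm $\sup_u\int_0^1G(v,u)\,dv=\|\boldsymbol{G}\|_{op}$. This is only assumed finite and may exceed $1$. Combining one $L_\infty$-bounded factor with $L_1$-decaying factors would need an $L_1\to L_\infty$ bound, i.e.\ a bounded kernel, which is not assumed.

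\textbf{The missing idea.} You never need to control the Neumann series. The paper writes $\mu_u\bar I_u$ through the one-dimensional Skorokhod formula with free path $\bar X_u-(\boldsymbol{G}^T\bar Y)_u$. Since $\bar X_u(s)\ge-\mu_us$ and $\boldsymbol{G}^T\bar Y$ is nondecreasing, this gives $\mu_u\bar I_u(t)\le\mu_ut+(\boldsymbol{G}^T\bar Y)_u(t)$. Inserting this into \eqref{eq:Qbar}, the interaction term cancels: $0\le\bar Q_u(t)\le\bar X_u(t)+\mu_ut=\bar Q_u(0)+\big(\lambda_u+\int_0^1\mu_vG(v,u)\,dv\big)t$. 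In your own framework this is one line. The path $W(t)=\mu t$ is a feasible regulator, because $\bar X(t)+(\mathbf{1}-\boldsymbol{G}^T)\mu t=\bar Q(0)+\lambda t\ge0$. Hence $\bar Y\le\mu t$ pointwise, and $\bar Q\le\bar X+\bar Y$. The hypothesis $\|\boldsymbol{G}\|_{op}<\infty$ is used only to bound $\int_0^1G(v,u)\,dv$ uniformly in $u$.

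\textbf{On $\bar Q(0)$.} You are right that $\bar Q(0)$ must be bounded, and the paper's argument tacitly needs this too. The final bound in Lemma~\ref{lem:Qbar-bound} contains $\bar Q_u(0)$, yet neither that lemma nor Corollary~\ref{coro:bQNbQ-conv0} assumes $\sup_u\bar Q_u(0)<\infty$; only Corollary~\ref{coro:example} does. Your fallback (integrable domination plus uniform integrability of the grid values) cannot handle the unbounded case, because $\frac1N\sum_i\|\bar Q_{i/N}\|_\infty$ is not controlled by any $L_1$ information. For example, take $G\equiv0$ and $\lambda\equiv\mu\equiv1$, so that $\bar Q_u(t)=\bar Q_u(0)=f(u)$. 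Let $f\in L_1$ be continuous on $(0,1]$, built from spikes at $1/k$ of height $k^3$ and width $k^{-5}$. With block averages of $f$ as initial data, Assumption~\ref{assumption:Nbound} holds for a small $\alpha>0$. Yet for $N=k$ the single term $i=1$ contributes at least $k^2-\|f\|_1$ to $\frac1N\sum_i\sup_t\big|N\int_{K^N_i}\bar Q_u(t)\,du-\bar Q_{i/N}(t)\big|$. Hence \eqref{eq:cvg-coupled} fails. Boundedness of $\bar Q(0)$ therefore has to be added as a hypothesis, not argued.
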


In the following Corollary, we give an example for the first condition in Corollary \ref{coro:bQNbQ-conv0} to hold.

\begin{coro}
    \label{coro:example}
    Suppose $\bar{Q}_u(0),\lambda_u,\mu_u$ are bounded and continuous in a.e.\ $u$ and $G(u,v)$ is continuous in a.e.\ $(u,v)$.
    Also suppose $\|\boldsymbol{G}\|_{op}<\infty$.
    Then, under the same assumptions in Theorem \ref{thm-main}, \eqref{eq:cvg-coupled} holds.
\end{coro}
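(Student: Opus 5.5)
The plan is to reduce Corollary \ref{coro:example} to Corollary \ref{coro:bQNbQ-conv0}. The hypotheses $\|\boldsymbol{G}\|_{op}<\infty$ and boundedness of $\lambda,\mu$ are assumed directly. So it suffices to prove that the map $u\mapsto \bar{Q}_u(\cdot)\in\mathcal{C}_T$ is continuous at a.e.\ $u$.

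\textbf{Step 1: the integral terms are continuous in $u$.} By \eqref{eq:Qbar1}, for each $u$,
\[
\bar{Q}_u(t)=\bar{Q}_u(0)+\lambda_u t-\mu_u\big(t-\bar{I}_u(t)\big)+\int_0^1 \mu_v\big(t-\bar{I}_v(t)\big)G(v,u)\,dv .
\]
Let $E_0$ be the null set where $\bar{Q}_\cdot(0),\lambda,\mu$ fail to be continuous. By Fubini, for a.e.\ $u$ the section $v\mapsto G(v,u)$ is continuous at $(v,u)$ for a.e.\ $v$. The bracket $\mu_v(t-\bar{I}_v(t))=\mu_v\bar{B}_v(t)$ lies in $[0,\|\mu\|_\infty T]$, since $0\le \bar{I}_v(t)\le t$ (idleness is at most elapsed time). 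I would check this last bound from the fixed-point or complementarity characterization (Theorem \ref{thm:compchar}), or by passing to the limit from $0\le \bar{I}^N_i\le t$.

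Given this bound, continuity of the integral term in $u$, uniformly in $t$, follows from dominated convergence. The domination requires a bound on $G$. Continuity a.e.\ does not give that, so I would either use $\|\boldsymbol{G}\|_{op}<\infty$ together with a truncation argument, or argue via $L_1$-continuity: the map $u\mapsto G(\cdot,u)\in L_1$ is continuous at a.e.\ $u$ when $G$ is a.e.\ continuous and the column integrals are uniformly bounded. This integrability point is the first technical issue.

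\textbf{Step 2: the idle time is continuous in $u$ (main obstacle).} Write $h_u(t)$ for the integral term and set $Z_u(t)=\bar{Q}_u(0)+\lambda_u t-\mu_u t+h_u(t)$. Then $\bar{Q}_u=Z_u+\mu_u\bar{I}_u$, with $\bar{I}_u$ increasing only when $\bar{Q}_u=0$. This is exactly the one-dimensional Skorokhod problem for each fixed $u$, so
\[
\mu_u\bar{I}_u(t)=\sup_{s\le t}[-Z_u(s)]^+ ,
\]
using $\mu_u>0$ and the uniqueness from Theorem \ref{thm:summary}. This identification is the key step: the coupling through $\bar{I}$ sits entirely inside $h$, and $h$ is already controlled in Step 1.

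\textbf{Step 3: conclude.} By Step 1, at a.e.\ $u$ the map $u\mapsto Z_u$ is continuous in sup norm. The one-dimensional reflection map is 1-Lipschitz in sup norm, so $\bar{Q}_u=Z_u+\sup_{s\le\cdot}[-Z_u(s)]^+$ is also continuous in $\mathcal{C}_T$ at those $u$. The exceptional set is contained in the union of $E_0$ and the Fubini-null set, so it is null. Corollary \ref{coro:bQNbQ-conv0} then gives \eqref{eq:cvg-coupled}.
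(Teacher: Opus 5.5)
Your outline is the paper's own route. You regard $\bar Q_u$ as the scalar reflection of a free process containing $\int_0^1\mu_v(t-\bar I_v(t))G(v,u)\,dv$, show that this free process is continuous in $u$ at a.e.\ $u$, use the 1-Lipschitz property of the one-dimensional Skorokhod map, and invoke Corollary~\ref{coro:bQNbQ-conv0}. Steps 2 and 3 are fine. Your bound $0\le \bar I_v(t)\le t$ is also correct: by induction $\pi^n_{\bar X,G^T}(0)_u(t)\le \mu_u t$, and $\pi^n_{\bar X,G^T}(0)\uparrow \mu\bar I$.

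The gap is Step 1, and neither of your fixes closes it. Truncation needs $\sup_{u'}\int_0^1 (G(v,u')-M)^+\,dv\to 0$ as $M\to\infty$, i.e.\ uniform integrability of the columns $G(\cdot,u')$. The hypothesis $\|\boldsymbol{G}\|_{op}<\infty$ only bounds their mass. The claim that $u\mapsto G(\cdot,u)\in L_1$ is continuous at a.e.\ $u$ is false. Take $C\subset[0,1]$ closed, nowhere dense, with $0,1\in C$ and $|C|>0$, and put $d(u)=\operatorname{dist}(u,C)$. Set $G(v,u)=\frac{\varepsilon}{d(u)}\mathbf{1}\{|v-u|\le d(u)/2\}$ for $u\notin C$ and $G(\cdot,u)=0$ for $u\in C$. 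Then:
\begin{itemize}
\item $G$ is continuous off the null set $\{(u,u):u\in C\}\cup\{(v,u):u\notin C,\ |v-u|=d(u)/2\}$;
\item $\|\boldsymbol{G}\|_{op},\|\boldsymbol{G}^T\|_{op}\le 2\varepsilon\log 3$;
\item yet $\int_0^1 G(v,u)\,dv=\varepsilon\mathbf{1}_{C^c}(u)$.
\end{itemize}
So already for $f\equiv 1$ the map $u\mapsto \int_0^1 f(v)G(v,u)\,dv$ is discontinuous at every point of $C$. Here $G(v,u')\to G(v,u)$ for a.e.\ $v$, but the mass $\varepsilon$ escapes to the $v$-null point $v=u$.

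The paper's proof has the same hole. Its appeal to dominated convergence has no dominating function for $G(v,u')\mu_v\bar I_v(t)$ when $G$ is unbounded. It appears the hole cannot be patched under the stated hypotheses; I sketch why rather than give a full proof. Consider a variant of the example above:
\begin{itemize}
\item $C$ is a fat Cantor set built by binary removals of length at most $4^{-k}$ at stage $k$, arranged so that $C$ contains no rationals in $(0,1)$;
\item $\psi$ is its Cantor function;
\item $G(v,u)=\frac{m}{\sqrt{d(u)}}\mathbf{1}\{|v-\psi(u)|\le \sqrt{d(u)}/2\}$ off $C$ and $0$ on $C$, with $m$ small.
\end{itemize}
This kernel is a.e.\ continuous and has $\|\boldsymbol{G}\|_{op}=m$. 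It also satisfies Assumption~\ref{assumption:GNGop} with block averages $G^N$ and small $\alpha$, because $v\mapsto G(v,\cdot)\in L_1$ is log-Lipschitz. Take $\lambda\equiv\mu\equiv\bar Q(0)\equiv 1$. Then there is no reflection and $\bar Q_u(t)=1+mt\mathbf{1}_{C^c}(u)$. Every grid point $i/N<1$ lies in $C^c$, so Theorem~\ref{thm-main} forces the left side of \eqref{eq:cvg-coupled} to converge to $mT|C|>0$.

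So an extra hypothesis is needed, for instance:
\begin{itemize}
\item $G$ bounded; or
\item $\{G(\cdot,u)\}_u$ uniformly integrable; or
\item $u\mapsto G(\cdot,u)$ continuous in $L_1$ at a.e.\ $u$.
\end{itemize}
Under any of these, your Step 1 goes through by dominated (or Vitali) convergence. Your bound $0\le \mu_v(t-\bar I_v(t))\le \|\mu\|_\infty T$ then gives the uniformity in $t$ that the paper's version leaves implicit.
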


The assumption of a.e.\ continuity of $G$ in Corollary \ref{coro:example} can be satisfied, for example, by block networks and nearest-neighbor networks; see examples (c) and (e) from Remark \ref{rmk:examples}.

\bigskip

We also consider the empirical measure $\nu^N$ and the averaged measure $\bar\nu$:
$$\nu^N= \frac{1}{N} \sum_{i=1}^N \delta_{\bar{Q}^N_i(\cdot)}, \quad \bar\nu:= \int_0^1 \delta_{\bar{Q}_u(\cdot)}\,du,$$
where $\delta_x$ denotes the Dirac measure at the point $x$. We present the following convergence result, with the proof given in Section  \ref{subsec:empconv}.

\begin{theorem} \label{thm:main2}
Under the same assumptions in Theorem \ref{thm-main}, we have
\begin{equation} \label{eq:nuN-conv}
W_1(\nu^N,\bar\nu) \to 0 \quad \mbox{in probability as } N \to \infty. 
\end{equation}
Moreover, for any function $f\colon \mathcal{D}_T\to \RR$ that is Lipschitz with respect to the uniform metric (that is, $|f(x)-f(y)| \le C\sup_{0 \le t \le T} |x(t)-y(t)|$ for all $x,y \in \mathcal{D}_T$ for some $C \in (0,\infty)$), we have
\begin{equation} \label{eq:nuNf-conv}
\frac{1}{N} \sum_{i=1}^N f(\bar{Q}^N_i)\to \int_0^1 f(\bar{Q}_u)\,du \quad \mbox{ in probability as } N \to \infty.
\end{equation}
\end{theorem}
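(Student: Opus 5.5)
The plan is to reduce \eqref{eq:nuN-conv} to Theorem \ref{thm-main} through a coupling. The difficulty is that Theorem \ref{thm-main} compares $\bar{Q}^N_i$ with the local average $N\int_{K^N_i}\bar{Q}_u\,du$, whereas $\bar\nu$ is built from the individual paths $\bar{Q}_u$. I would therefore introduce two intermediate measures:
\[
\hat\nu^N:=\frac1N\sum_{i=1}^N\delta_{\hat Q^N_i}, \quad \hat Q^N_i(\cdot):=N\int_{K^N_i}\bar{Q}_u(\cdot)\,du, \qquad \tilde\nu^N:=\int_0^1\delta_{\hat Q^N_{\iota_N(u)}}\,du,
\]
where $\iota_N(u)=i$ for $u\in K^N_i$. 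Since $|K^N_i|=1/N$, we have $\tilde\nu^N=\hat\nu^N$. The triangle inequality then gives $W_1(\nu^N,\bar\nu)\le W_1(\nu^N,\hat\nu^N)+W_1(\hat\nu^N,\bar\nu)$, and I will bound the two terms separately.

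\textbf{First term.} Use the coupling that matches atom $i$ with atom $i$, and note that the Wasserstein distance on $\mathcal{D}_T$ is dominated by the uniform-metric coupling cost. This gives
\[
W_1(\nu^N,\hat\nu^N)\le\frac1N\sum_{i=1}^N\sup_{t\le T}\big|\bar{Q}^N_i(t)-\hat Q^N_i(t)\big|.
\]
By Theorem \ref{thm-main} the expectation of the right side is at most $\kappa_TN^{-\alpha/2}$, so by Markov's inequality this term tends to $0$ in probability.

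\textbf{Second term.} This term is deterministic. Couple $\hat\nu^N=\tilde\nu^N$ with $\bar\nu$ by sending $u\mapsto(\hat Q^N_{\iota_N(u)},\bar{Q}_u)$ with $u$ uniform on $[0,1]$. This gives
\[
W_1(\hat\nu^N,\bar\nu)\le\int_0^1\sup_{t\le T}\Big|N\int_{K^N_{\iota_N(u)}}\bar{Q}_v(t)\,dv-\bar{Q}_u(t)\Big|\,du=:\epsilon_N .
\]
To show $\epsilon_N\to0$ I view $u\mapsto\bar{Q}_u$ as an element of the Bochner space $L_1([0,1];\mathcal{C}_T)$ with the uniform norm. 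This is legitimate because $\bar{Q}\in\mathcal{C}_T(L_1)$ by Proposition \ref{prop:Lipschitz}, so $\|\bar{Q}\|_{T,1}=\int_0^1\|\bar{Q}_u\|_\infty\,du<\infty$, and measurability follows since $\mathcal{C}_T$ is separable.

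The operator $E_N$ that averages over the blocks $K^N_i$ is a contraction on $L_1([0,1];\mathcal{C}_T)$, because $\|\int f\|\le\int\|f\|$. Moreover $E_Nf\to f$ for every $f$ in the dense class of simple functions constant on dyadic-type intervals; for intervals with arbitrary endpoints the error is at most $2\|c\|/N$ per interval. A standard $\varepsilon/3$ argument then gives $\|E_N\bar{Q}-\bar{Q}\|_{L_1(\mathcal{C}_T)}\to0$, which is exactly $\epsilon_N\to0$. No continuity in $u$ is needed here, which explains why Corollary \ref{coro:bQNbQ-conv0} needs extra hypotheses while this theorem does not. Combining the two terms proves \eqref{eq:nuN-conv}.

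\textbf{Consequence \eqref{eq:nuNf-conv}.} Let $f$ be $C$-Lipschitz in the uniform metric. Applying the two couplings above to $f$ directly gives
\[
\Big|\frac1N\sum_i f(\bar{Q}^N_i)-\int_0^1f(\bar{Q}_u)\,du\Big|\le C\Big(\frac1N\sum_i\sup_{t\le T}|\bar{Q}^N_i-\hat Q^N_i|+\epsilon_N\Big)\to0
\]
in probability. This holds even though $f$ need not be bounded; the integrability needed for $\int_0^1 f(\bar{Q}_u)\,du$ comes from $|f(x)|\le|f(0)|+C\|x\|_\infty$ together with $\|\bar{Q}\|_{T,1}<\infty$. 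Equivalently, \eqref{eq:nuNf-conv} is $C$ times the Kantorovich–Rubinstein dual of the first-moment bound.

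\textbf{Main obstacle.} The step I expect to be hardest is the Bochner-space approximation in the second term, specifically verifying that $u\mapsto\bar{Q}_u$ is strongly measurable into $\mathcal{C}_T$. I would handle this by noting that $u\mapsto\bar{Q}_u(t)$ is measurable for each rational $t$, that paths are continuous, and then invoking Pettis's theorem.
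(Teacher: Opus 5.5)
Your proof is correct, but it takes a different route from the paper's.

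\textbf{The paper's route.} The paper never passes through the block averages $N\int_{K^N_i}\bar{Q}_u\,du$. It compares $\nu^N$ with the empirical measure of the lifted intermediate paths $\widetilde{Q}^N_{i/N}$ from \eqref{eq:infliftqueue}. Because $\widetilde{Q}^N$ is constant in $u$ on each $K^N_i$, that measure coincides exactly with $\int_0^1\delta_{\widetilde{Q}^N_u}\,du$. The deterministic term is then bounded directly by $\|\widetilde{Q}^N-\bar{Q}\|_{T,1}$ (Lemmas \ref{lem:lift-lip} and \ref{lem:tQbQ-diff}). The random term is bounded by $\frac1N\sum_i\E\sup_t|\bar{Q}^N_i(t)-\breve{Q}^N_i(t)|$ (Lemma \ref{lem:compbound} together with \eqref{eq:lift0}). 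Both bounds are in fact $O(N^{-\alpha/2})$, so the paper's argument yields $L^1$ convergence with a rate and needs no approximation step.

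\textbf{Your route.} You use only the statement of Theorem \ref{thm-main}. The price is the extra averaging error $\epsilon_N=\|E_N\bar{Q}-\bar{Q}\|_{T,1}$. You control it qualitatively by two facts: interval-simple functions are dense in $L_1([0,1];\mathcal{C}_T)$, and $E_N$ is a contraction. That argument is sound. It also shows that the empirical-measure result follows from the averaged estimate alone, with no regularity in $u$. In addition, you handle the measurability of $u\mapsto\bar{Q}_u$ and the integrability of $f(\bar{Q}_u)$ for unbounded Lipschitz $f$ more carefully than the paper does.

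\textbf{Recovering the rate.} You can get the rate inside your framework. Since $E_N$ is a contraction in $\|\cdot\|_{T,1}$ and $E_N\widetilde{Q}^N=\widetilde{Q}^N$, you have $\epsilon_N\le\|E_N(\bar{Q}-\widetilde{Q}^N)\|_{T,1}+\|\widetilde{Q}^N-\bar{Q}\|_{T,1}\le 2\|\widetilde{Q}^N-\bar{Q}\|_{T,1}=O(N^{-\alpha/2})$. However, this uses the paper's internal Lemma \ref{lem:tQbQ-diff}, not just the statement of Theorem \ref{thm-main}.
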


\begin{remark}
    \label{rem:functionalmeasure}
    We first remark that the function $f$ is over the paths of $\{\bar{Q}^N_i(t): t \in [0,T]\}$ and the limit $\{\bar{Q}_u(t): t \in [0,T]\}$ for each $u \in [0,1]$.
    Also, \eqref{eq:nuN-conv} automatically implies \eqref{eq:nuNf-conv} for bounded and continuous functions $f$ on $\mathcal{D}_T$, and Theorem \ref{thm:main2} shows that \eqref{eq:nuNf-conv} also holds for Lipschitz functions.
    We discuss a few examples of applications with corresponding functionals of the paths below.
    \begin{enumerate}[label=(\alph*)]
    \item 
        The running maxima, corresponding to the maximum of the queue length over $[0,T]$.
        Let $f \colon \mathcal{D}_T \to \mathbb{R}$ be given by
        \begin{equation*}
        f(q) = h\Big(\sup_{0 \le t \le T} q(t)\Big), \quad q \in \mathcal{D}_T,
        \end{equation*}
        where $h$ is a Lipschitz cost function, such as $h(x) = (x-a)^+$ for some $a \in (0,\infty)$. 
        Since $f$ is Lipschitz, from \eqref{eq:nuNf-conv} we have
        \begin{equation*}
            \frac{1}{N} \sum_{i=1}^N h\Big(\sup_{0 \le t \le T}\bar{Q}^N_i(t)\Big) \to \int_0^1 h\Big(\sup_{0 \le t \le T}\bar{Q}_u(t)\Big)\,du
        \end{equation*}
        in probability as $N \to \infty$.
    
    \item 
        The path integration, corresponding to the total waiting time over $[0,T]$.
        Let $f \colon \mathcal{D}_T \to \mathbb{R}$ be given by
        $$f(q)=\int_0^T q(t)\,dt, \quad q \in \mathcal{D}_T.$$
        Since $f$ is Lipschitz, from \eqref{eq:nuNf-conv} we have
        \begin{equation}
            \label{eqn-int-conv}
            \frac{1}{N} \sum_{i=1}^N \int_0^T \bar{Q}^N_i(t)\,dt \to \int_0^1 \Big( \int_0^T \bar{Q}_u(t)\,dt \Big)\,du
        \end{equation}
        in probability as $N \to \infty$.
        Note that this means
        \begin{equation*}
            \int_0^T \frac{1}{N} \sum_{i=1}^N \bar{Q}^N_i(t)\,dt \to \int_0^T \Big( \int_0^1 \bar{Q}_u(t)\,du \Big)\,dt,
        \end{equation*}
        which essentially says the convergence of the average queue length.
        
    \item 
        Projection map.
        Let $f \colon \mathcal{D}_T \to \mathbb{R}$ be given by 
        $$f(q)=q(t), \quad q \in \mathcal{D}_T,$$
        for some fixed $t \in [0,T]$.
        Since $f$ is Lipschitz, from \eqref{eq:nuNf-conv} we have
        \[
        \lim_{N\to\infty}\frac{1}{N} \sum_{i=1}^N  \bar{Q}^N_i(t)  =\int_0^1 \bar{Q}_u(t)\,du \quad \text{in probability}. 
        \] 
    \end{enumerate}
\end{remark}

\begin{remark}
\label{rmk:examples}
Our model formulation allows various forms of heterogeneity, in terms of the arrival and service rates at each station and the transition probabilities (equivalently, the underlying network topology). 
We first observe from \eqref{eq:Qbar}--\eqref{eq:Xbar}, that, if the arrival and service rates are homogeneous, that is, $\lambda_u \equiv \lambda$ and $\mu_u \equiv \mu$ for constant $\lambda, \mu$, assuming that $g_u:=\int_0^1 G(v,u) \,dv$ is also a constant, denoted as $g$, and in addition, $\bar{Q}_u(0) \equiv \bar{Q}(0)$ for all $u$, then we obtain a homogeneous ODE (in fact, a standard mean-field limit). The quantity $g_u:=\int_0^1 G(v,u) \,dv$ being constant means the total routing probabilities inflow for each node are the same, however, it does allow certain heterogeneity in the connectivity since we are not requiring $G(u,v)$ to be constant. 

On the other hand, our model allows for many choices of the kernel $G(u,v)$. We give a few examples below:
\begin{enumerate}
    \item[(a)] Symmetric network model: the transition probabilities for the finite model satisfy $p^N_{ij}= p^N_{ji}$ for each $i,j$, and in the limit $G(u,v) = G(v,u)$ for each $u, v \in [0,1]$.
If, in addition, $\lambda_u\equiv \lambda$ and $\mu_u \equiv \mu$ for all $u \in [0,1]$, then we have $\bar{X}_u(t) = \bar{Q}_u(0) + (\lambda  -\mu G_0(u)) t$ where $G_0(u):=1  - \int_0^1  G(u,v) \,dv$. Note that $G_0(u)$ can be interpreted as the probability of leaving the network after service at node ``$u$" in the limit. Then we obtain
$\bar{Q}_u(t) = \bar{Q}_u(0) + (\lambda  -\mu G_0(u)) t + \mu \left( \bar{I}_u(t) - \int_0^1  \bar{I}_v(t) G(u,v) \,dv \right).$

\item[(b)] Bipartite network model: suppose we
partition the set of nodes into two subsets $\{1, \dots, K\}$ and $\{K+1,\dots,N\}$, and the transitions only occur across the two subsets but not within the subsets. In this case, the kernel function will take the form: for some $k\in(0,1)$ (corresponding to the limit of $K/N$), $G(u,v)=0$ for $(u,v)\in([0,k]\times[0,k])\cup ([k,1]\times[k,1])$, and 
$G(u,v)\ge 0$ can be arbitrary for $(u,v)\in([0,k]\times[k,1])\cup ([k,1]\times[0,k])$. 

\item [(c)] Block network model: suppose we have $\{J_i\}_{i=1,\cdots M}$ and $\{K_i\}_{i=1,\cdots M'}$ to be any two arbitrary fixed partitions of $[0,1]$, and 
$G(u,v)$ is defined as a constant over each rectangle $J_i\times K_j$.  
Given $N$ nodes, we can construct a network model $G^N$ by sampling, $G^N_{ij} = G(i/N,j/N)$. 

\item [(d)] Network model with clusters: For a finite network, let $\{R^N_l\}_{l=1,\cdots,M}$ be a partition of $\{1,\cdots,N\}$ in the increasing order. For $l\neq l'$, $p^N_{ij} \ll 1/N$ for $i\in R^N_l$ and $j\in R^N_{l'}$. Since $G^N_{ij} = Np^N_{ij}$, this corresponds to $G^N_{ij} \ll 1$ for $i\in R^N_l$ and $j\in R^N_{l'}$. Let $\{J_i\}_{i=1,\cdots M}$ be the corresponding partition of $[0,1]$. 
In the case where different clusters do not interact, we have $G(u,v)\geq0$ on $J_i \times J_i$, $i=1,\cdots,M$ and $G(u,v) = 0$ for $u\in J_i, v\in J_{i'}$ with $i\neq i'$.  One may also consider a case where $l\neq l'$, $p^N_{ij} = O(\epsilon)/N$ for $i\in R^N_l$ and $j\in R^N_{l'}$ and for small $\epsilon>0$, which leads to $G(u,v) = O(\epsilon)$ for  $u\in J_i, v\in J_{i'}$ with $i\neq i'$.

\item [(e)] Network with nearest dense $k$-neighbors on a circle: Let $k$ be an integer less than $n$ such that $k=O(N)$, i.e., $k=\left\lfloor\alpha N\right\rfloor$ for some $0<\alpha<1$ where we assume that $\alpha$ is fixed as $N\to\infty$ (assume $k$ even for simplicity). Each node is connected to its $k$ nearest neighbors through a routing matrix satisfying $p_{ij}=(1-\varepsilon)/k$ for $j\in[i-\frac{k}{2},\dots,i-1,i+1,\dots,i+\frac{k}{2}]\mbox{ mod } N$ and some $0 < \varepsilon < 1$. In the graph limit this corresponds to 
$G(u,v)=\frac{1-\varepsilon}{\alpha}\mathsf{1}(\min{(|u-v|,1-|u-v|)} \le \alpha)$.

\item[(f)] Other functionals: 
    $G(u,v) = au^\alpha v^\beta$, for $a,\alpha, \beta >0$, $u, v \in [0,1]$, satisfying $\|\boldsymbol{G}\|_{op}\le 1$,
    or 
    $G(u,v)=a+b\sin(2\pi u)+c\sin(2\pi v)$ for any $a, b, c$ satisfying $1\ge a>|b|+|c|$. 
\end{enumerate}

For any infinite system taking a form of the above the kernel $\boldsymbol{G}$ must be chosen to satisfy  $\boldsymbol{G}^T\in\mathcal{R}$.
In this case, convergence from the finite to infinite system can be guaranteed.
\end{remark}

\bigskip

\section{An Infinite-Dimensional Skorokhod Reflection Mapping}
\label{sec:skhmap}

In this section, we study an infinite-dimensional Skorokhod mapping, proving the existence and uniqueness of a solution, and the Lipschitz continuity property. 

\subsection{Motivation}
\label{FequalsGT}\ \\
If we write $Z(t)\coloneq \bar{Q}(t)$, $X(t) \coloneq \bar{Q}(0) + \lambda t - ({\bf1}-\boldsymbol{G}^T)\mu t$ and $Y(t) \coloneq \bar{Y}(t) \coloneq \operatorname{diag}(\mu)\bar{I}(t)$, from relations \eqref{eq:Qbar-new}--\eqref{eq:idletime}  and writing $\boldsymbol{F} \coloneq \boldsymbol{G}^T$, we have the following system:
\begin{equation} 
    \label{eq:cond1op}
    Z(t) = X(t) +  ({\bf1}-\boldsymbol{F}) Y(t) \geq 0,
\end{equation}
\begin{equation}
    \label{eq:cond2op}
    dY(t) \geq 0 \text{  and  } Y(0) = 0,
\end{equation}
\begin{equation} 
    \label{eq:cond3op}
    Z_u(t)dY_u(t) = 0 \text{ for all } u \in [0,1]\,.
\end{equation}

\begin{remark}
\label{rem:refregdef}
We say that this set of equations defines an infinite-dimensional Skorokhod mapping in $\mathcal{D}_T(L_1)$. 
When  we have existence and uniqueness of $(Z,Y)$ given $X$ and $\boldsymbol{F}$, denote by $\Phi_F(X):=Z$ and $\Psi_F(X):=Y$ the \textbf{reflected process} and \textbf{regulator}, respectively. 
\end{remark}

Given a function $X$ and an operator $\boldsymbol{F}$, one may ask if a solution to the system \eqref{eq:cond1op}--\eqref{eq:cond3op} exists, and if so whether such a solution is unique. The following theorem shows that such existence and uniqueness holds for suitable spaces.

\begin{theorem}
\label{thm:summary}
Take any $X\in\mathcal{D}_T(L_1)$ with $X(0)\ge0$ and $\boldsymbol{F}\in\mathcal{R}$. There is a unique solution $(Y,Z)\in\mathcal{D}_T^\uparrow(L_1)\times\mathcal{D}_T(L_1)$ to the system given by \eqref{eq:cond1op}--\eqref{eq:cond3op}.
\end{theorem}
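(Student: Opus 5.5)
The plan follows Whitt's regulator-set approach. First I show that any solution is a fixed point of a monotone map, then I build the fixed point by Picard iteration, and finally I get uniqueness from the spectral radius condition.

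\textbf{Step 1: the map $\pi$.} For $Y\in\mathcal{D}_T^\uparrow(L_1)$, define
\[
\pi(Y)_u(t) := \sup_{0\le s\le t}\big[-X_u(s) + (\boldsymbol{F}Y)_u(s)\big]^+ .
\]
This is the one-dimensional Skorokhod regulator applied coordinatewise to $X - ({\bf1}-\boldsymbol{F})Y + Y = X+\boldsymbol{F}Y$. Since $F\ge0$ and $Y$ is nondecreasing in $t$, the process $\boldsymbol{F}Y$ is nondecreasing, nonnegative and c\`adl\`ag in each $u$ by dominated convergence. Moreover $\|\boldsymbol{F}Y\|_{T,1}\le\|\boldsymbol{F}\|_{op}\|Y\|_{T,1}$, so $\pi(Y)\in\mathcal{D}_T^\uparrow(L_1)$, with $\|\pi(Y)\|_{T,1}\le \|X\|_{T,1}+\|Y\|_{T,1}$.

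A pair $(Y,Z)$ solves \eqref{eq:cond1op}--\eqref{eq:cond3op} if and only if $Y=\pi(Y)$. To see this, fix $u$. Then $Z_u = (X_u+(\boldsymbol{F}Y)_u) + Y_u$ is a one-dimensional Skorokhod problem with free process $X_u+(\boldsymbol{F}Y)_u$, whose initial value $X_u(0)+(\boldsymbol{F}Y)_u(0)$ is nonnegative. By uniqueness of the one-dimensional problem, its regulator is exactly $\pi(Y)_u$.

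\textbf{Step 2: existence.} The map $\pi$ is monotone: $Y^1\le Y^2$ implies $\pi(Y^1)\le\pi(Y^2)$, because $F\ge0$. It also satisfies a Lipschitz bound:
\[
\sup_{t}|\pi(Y^1)_u(t)-\pi(Y^2)_u(t)| \le \big(\boldsymbol{F}\sup_t|Y^1-Y^2|\big)(u).
\]
Start the iteration at $Y^{(0)}=0$ and set $Y^{(n+1)}=\pi(Y^{(n)})$. The sequence is nondecreasing, and
\[
\|Y^{(n+1)}-Y^{(n)}\|_{T,1}\le \|\boldsymbol{F}^{(n)}\|_{op}\,\|Y^{(1)}\|_{T,1}.
\]
Since $\rho(\boldsymbol{F})<1$, Gelfand's formula gives $\|\boldsymbol{F}^{(n)}\|_{op}\le c\,r^n$ for some $r<1$. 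The increments are therefore summable, and $Y^{(n)}\uparrow Y$ in $\|\cdot\|_{T,1}$.

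It remains to check that the limit stays in the space and is a fixed point.
\begin{itemize}
\item The limit $Y$ lies in $\mathcal{D}_T^\uparrow(L_1)$. Summability of $\sup_t|Y^{(n+1)}_u-Y^{(n)}_u|$ in $L_1$ gives uniform-in-$t$ convergence for a.e.\ $u$. Hence each such $Y_u$ is c\`adl\`ag and nondecreasing.
\item On the exceptional null set of $u$, I define $Y_u:=\pi(Y)_u$. This is harmless because $\boldsymbol{F}Y$ does not see null sets.
\item Passing to the limit, $\pi(Y)=Y$ holds for a.e.\ $u$ by the Lipschitz bound, and then for every $u$ by the redefinition above.
\end{itemize}
I would then set $Z=X+({\bf1}-\boldsymbol{F})Y$.

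\textbf{Step 3: uniqueness.} Suppose $Y^1$ and $Y^2$ are two fixed points. Let $h(u)=\sup_t|Y^1_u(t)-Y^2_u(t)|$. The Lipschitz bound gives $h\le\boldsymbol{F}h$ pointwise, and iterating, $h\le\boldsymbol{F}^{(n)}h$. Taking $L_1$ norms,
\[
\|h\|_1\le\|\boldsymbol{F}^{(n)}\|_{op}\|h\|_1\to0,
\]
so $h=0$ a.e. Then $\boldsymbol{F}Y^1=\boldsymbol{F}Y^2$, and the fixed-point identity gives $Y^1_u=Y^2_u$ for every $u$. It follows that $Z^1=Z^2$.

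\textbf{Expected main obstacle.} The delicate points are the measure-theoretic bookkeeping: the requirement that the identities hold for every $u$ rather than a.e., c\`adl\`ag regularity of the limit, and measurability of $u\mapsto\sup_t$. The key analytic input is converting $\rho(\boldsymbol{F})<1$ into geometric decay of $\|\boldsymbol{F}^{(n)}\|_{op}$. This needs the operator norm here to be the induced norm on $L_1$, which holds since it is $\sup_v\int|F(u,v)|du$. Gelfand's formula then applies to the bounded operator $\boldsymbol{F}$ on $L_1$.
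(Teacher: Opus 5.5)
Your argument is correct in substance, but it takes a genuinely different route from the paper. Although you describe it as Whitt's regulator-set approach, you never use the feasible set $\mathbf{\Psi}_F(X)$ or its infimum.

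\textbf{The paper's route.} It exhibits the explicit feasible regulator $(\mathbf{1}-\boldsymbol{F})^{-1}\sup_{0\le s\le t}[-X(s)]^+$ and shows that the coordinatewise infimum $\Psi_F(X)$ is again feasible (Theorem \ref{thm:exrefmap}). It then gets a fixed point of $\pi_{X,F}$ by an order-theoretic iteration from $0$, bounded above by a feasible regulator (Theorem \ref{thm:monotone-fixed-point}). That fixed point is identified with the minimal regulator (Theorem \ref{thm:unqfixpoint}). Complementarity and uniqueness of solutions then come from Theorem \ref{thm:compchar}.

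\textbf{Your route.} You reduce the system, coordinate by coordinate, to the one-dimensional Skorokhod problem. Solutions are then exactly the fixed points of $\pi_{X,F}$, and complementarity comes for free. Existence is a Picard/contraction argument driven by geometric decay of $\|\boldsymbol{F}^{(n)}\|_{op}$, closer in spirit to the Harrison--Reiman approach mentioned in the introduction.

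\textbf{What each buys.} Your route is shorter and self-contained. The summable increments give convergence uniformly in $t$ for a.e.\ $u$, so c\`adl\`ag regularity of the limit is automatic. You thereby bypass the order-continuity argument of Lemma \ref{lem:monotone-fixed-point2}(iii) and the right-continuous regularization of Lemma \ref{lem:monotone-fixed-point1}. Your Step 3 also gives uniqueness among all fixed points directly. In the paper, the proof of Theorem \ref{thm:unqfixpoint} only identifies the limit of the iterates from $0$ with $\Psi_F(X)$; uniqueness among all fixed points is cleanly supplied only by the contraction estimate of Lemma \ref{lem:pikcontraction}, which is essentially your Step 3. The paper's route buys the structural fact that the regulator is the least element of $\mathbf{\Psi}_F(X)$, with an explicit feasible upper bound. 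You can recover minimality in one line: any feasible $W$ satisfies $\pi_{X,F}(W)\le W$, hence $\pi^n_{X,F}(0)\le W$ for all $n$.

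\textbf{Two small repairs.}

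First, the sign in Step 1 is off. We have $X+(\mathbf{1}-\boldsymbol{F})Y-Y=X-\boldsymbol{F}Y$, so coordinate $u$ is a one-dimensional problem with free process $X_u-(\boldsymbol{F}Y)_u$; your formula for $\pi$ is consistent with this. Its initial value is nonnegative because $Y(0)=0$ by \eqref{eq:cond2op}, not because both terms are nonnegative. For the converse direction you must also check that a fixed point vanishes at $t=0$. This holds: $X(0)\ge 0$ gives $Y(0)\le\boldsymbol{F}Y(0)$, and your Step 3 argument then forces $Y(0)=0$. Alternatively, note that every Picard iterate from $0$ vanishes at $t=0$.

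Second, redefining $Y$ only where uniform convergence fails does not by itself give $\pi_{X,F}(Y)=Y$ at every $u$. Instead, replace $Y$ by $\pi_{X,F}(Y)$ itself. It equals $Y$ a.e., so $\boldsymbol{F}$ does not see the change, and it is an exact fixed point everywhere. Alternatively, monotone convergence gives $\pi_{X,F}(Y^{(n)})_u\uparrow\pi_{X,F}(Y)_u$ at every $u$.
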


\noindent We will prove the above theorem in the rest of this subsection. Existence is shown in Theorem \ref{thm:exrefmap}, uniqueness in Theorem \ref{thm:unqfixpoint}, and the complementarity condition \eqref{eq:cond3op} is shown in Theorem \ref{thm:compchar}.

\subsection{Definition and Characterization}

\begin{definition}[Reflection map in $\mathcal{D}_T(L_1)$]
Let $X$ be any function in $\mathcal{D}_T$. For any $\boldsymbol{F} \in \mathcal{R}$, define the \textit{feasible regulator} set $\mathbf{\Psi}_F$ by 
$$\mathbf{\Psi}_F(X) \coloneq \{W \in \mathcal{D}_T^\uparrow(L_1) : X+(\mathbf{1}-\boldsymbol{F})W \geq 0\}.$$
Given such $\mathbf{\Psi}_F$, further define the \textit{reflection map} $R_F \coloneq (\Psi_F,\Phi_F): \mathcal{D}_T(L_1) \rightarrow \mathcal{D}_T^\uparrow(L_1) \times \mathcal{D}_T(L_1)$ of $X$ by
\begin{align}
    & Y \equiv \Psi_F(X) \coloneq \inf \mathbf{\Psi}_F(X), \\
    & Z \equiv \Phi_F(X) \coloneq X + (\mathbf{1}-\boldsymbol{F})Y. 
\end{align}
Here the infimum over $\mathbf{\Psi}_F(X)$ is understood as the following. For each $u \in [0,1]$ and $t \in [0,T]$,
$$Y_u(t) \coloneq \inf\{W_u(t) \in \mathbb{R}: W \in \mathbf{\Psi}_F(X)\}$$
\end{definition}

As $\inf \mathbf{\Psi}_F(X)$ is taken componentwise, we still have to show that $\mathbf{\Psi}_F(X) \neq \emptyset$ and $Y \in \mathbf{\Psi}_F$, for which we will need the next two lemmas. As will be proved, the reflection map $R_F$ is precisely the reflected process and regulator of the system \eqref{eq:cond1op}--\eqref{eq:cond3op}.

\begin{lemma}
\label{eq:infimum}
Take $\mathcal{W} \subseteq\mathcal{D}_T^\uparrow(L_1)$. Then $\uW\, \coloneq \inf \mathcal{W} \in \mathcal{D}_T^\uparrow(L_1)$.
\end{lemma}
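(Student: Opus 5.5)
The plan is to check the defining properties of $\mathcal{D}_T^\uparrow(L_1)$ for $\uW$ directly: monotonicity and nonnegativity, the càdlàg property of each $\uW_u$, and finiteness of $\|\uW\|_{T,1}$. Throughout I assume $\mathcal{W}\neq\emptyset$; otherwise the infimum is $+\infty$ and the statement is vacuous.

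\emph{Monotonicity and nonnegativity.} Fix $u\in[0,1]$. Every $W\in\mathcal{W}$ satisfies $W_u(0)\ge 0$ and is nondecreasing in $t$, so $W_u(t)\ge 0$ for all $t$. Hence $\uW_u(t)=\inf_{W\in\mathcal{W}}W_u(t)\ge 0$. For $s\le t$ and each $W$ we have $\uW_u(s)\le W_u(s)\le W_u(t)$. Taking the infimum over $W$ gives $\uW_u(s)\le\uW_u(t)$. So $\uW_u(0)\ge0$ and $d\uW_u(t)\ge0$ in the sense of a nondecreasing function.

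\emph{Càdlàg property.} Since $\uW_u$ is nondecreasing and bounded above by $W_u(T)<\infty$ for any fixed $W\in\mathcal{W}$, left limits exist everywhere. Right limits $\uW_u(t+)$ also exist, with $\uW_u(t+)\ge\uW_u(t)$. For the reverse inequality, fix $W\in\mathcal{W}$. For every $s>t$ we have $\uW_u(s)\le W_u(s)$; letting $s\downarrow t$ and using right-continuity of $W_u$ gives $\uW_u(t+)\le W_u(t)$. Taking the infimum over $W$ yields $\uW_u(t+)\le\uW_u(t)$. Hence $\uW_u$ is right-continuous, so $\uW_u\in\mathcal{D}_T$.

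\emph{Integrability.} By monotonicity and nonnegativity, $\sup_{0\le t\le T}|\uW_u(t)|=\uW_u(T)\le W_u(T)=\sup_{t}|W_u(t)|$ for any fixed $W\in\mathcal{W}$. Therefore $\|\uW\|_{T,1}\le\|W\|_{T,1}<\infty$.

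The step I expect to be the main obstacle is measurability of $u\mapsto\uW_u(t)$. This is needed for the integral defining $\|\uW\|_{T,1}$ to make sense, and a pointwise infimum over an uncountable family need not be measurable. My first attempt would be to reduce to countably many times. By right-continuity, $\uW_u(t)=\lim_{q\downarrow t,\,q\in\mathbb{Q}}\uW_u(q)$, so it suffices to get measurability of $u\mapsto\uW_u(q)$ for rational $q$. For each fixed $q$, I would try to replace $\mathcal{W}$ by a countable subfamily realizing the infimum, via an essential-infimum argument in the lattice $L_1$, and patch the resulting null sets. This patching may only give the infimum up to a Lebesgue-null set of $u$. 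If that remains problematic, I would instead work with the Lebesgue (outer) integral, which is harmless since the bound $0\le\uW\le W$ already controls the norm.
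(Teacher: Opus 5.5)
Your treatment of nonnegativity, monotonicity, right-continuity and the bound $\|\uW\,\|_{T,1}\le\|W\|_{T,1}$ is correct and matches the paper's proof. The paper gets right-continuity by interchanging $\inf_{h>0}$ and $\inf_{W\in\mathcal{W}}$, which is your step $\uW_u(t+)\le W_u(t)$.

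The gap is the one you flagged, measurability of $u\mapsto\uW_u(t)$, and it cannot be closed: for uncountable $\mathcal{W}$ the lemma is false. The paper handles this step by asserting that an infimum of measurable functions is measurable, which holds only for countable families. Counterexample: let $A\subset[0,1]$ be non-Lebesgue-measurable, and for $a\in A$ set $W^a_u(t)=1-\mathbf{1}_{\{u=a\}}$ for all $t\in[0,T]$. Each $W^a$ lies in $\mathcal{D}_T^\uparrow(L_1)$ with $\|W^a\|_{T,1}=1$. Yet $\inf_{a\in A}W^a_u(t)=\mathbf{1}_{A^c}(u)$, which is not measurable, nor a.e.\ equal to any measurable function.

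This also shows why your fallbacks fail. The essential infimum of this family is the constant $1$, which differs from the pointwise infimum on $A$, a set of positive outer measure rather than a null set. An outer integral would only produce a number; it would not make $\uW\,(t)$ an element of $L_1$, which membership in $\mathcal{D}_T(L_1)$ requires. Passing to rational times does not touch the real difficulty, which is the uncountable index set $\mathcal{W}$, not the time variable.

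What is true, and is all the paper needs, is the statement for $\mathcal{W}=\mathbf{\Psi}_F(X)$, where the infimum is attained by an explicitly measurable element. The argument runs as follows.

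1. Take the feasible $\widetilde{W}$ from the first part of Theorem \ref{thm:exrefmap}, which does not use this lemma, as a majorant. Let $W^*$ be the right-regularized increasing limit of $\pi^n_{X,F}(0)$, as in Lemma \ref{lem:monotone-fixed-point1} and Theorem \ref{thm:monotone-fixed-point}.
2. Each $u\mapsto\pi^n_{X,F}(0)_u(t)$ is measurable. By right-continuity in $s$, the supremum in Definition \ref{def:pidefD} may be taken over $([0,t]\cap\mathbb{Q})\cup\{t\}$, and $\boldsymbol{F}$ preserves measurability by Tonelli. Hence $W^*$, a countable limit, is measurable.
3. For any $W\in\mathbf{\Psi}_F(X)$, Lemma \ref{lem:feasregineq} and monotonicity of $\pi_{X,F}$ give $\pi^n_{X,F}(0)\le\pi^n_{X,F}(W)\le W$ everywhere. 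Using right-continuity of $W$, this gives $W^*\le W$.
4. Meanwhile $W^*=\pi_{X,F}(W^*)$ is itself feasible. So $\inf\mathbf{\Psi}_F(X)=W^*$ pointwise, which yields $\Psi_F(X)\in\mathbf{\Psi}_F(X)$ without the general lemma.

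The lemma as stated does hold for countable $\mathcal{W}$, by your argument together with measurability of countable infima.
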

\begin{proof}
The property $\uW\,(0) \geq 0$ is immediate. Consider times $0 \leq t_1 < t_2 \leq T$ and fix $u \in [0,1]$.
Since $dW_u(t) \geq 0 \:\: \forall t\in[0,T]$, 
\begin{align*}
W_u(t_1) \leq W_u(t_2) \:\: \forall W\in \mathcal{W}. 
\end{align*}
Taking the infimum over $W \in \mathcal{W}$ gives
$$\uW_u(t_1) \leq \uW_u(t_2)$$
and so $d\uW_u(t)\geq 0 \:\: \forall t\in[0,T]$.

To show that $\uW\,$ is right-continuous, first note that for any increasing function $f$, $\lim_{h\downarrow0}f(x+h) = \inf_{h>0}f(x+h)$. Therefore, we have 
\begin{align*}
\lim_{h\downarrow0}\uW_u(t+h) &= \inf_{h>0}\uW_u(t+h) 
=\inf_{h>0}\inf_{W\in\mathcal{W}}W_u(t+h)\\
&=\inf_{W\in\mathcal{W}}\inf_{h>0}W_u(t+h)
=\inf_{W\in\mathcal{W}}W_u(t)
=\uW_u(t). 
\end{align*}
The property of left limits is immediate as $\uW_u(t)$ is increasing and bounded above by $W_u(t)$.

For each $t\in[0,T]$, for all $W\in\mathcal{W}$ the map $u\mapsto W_u(t)$ is in $L_1$. Since the infimum of measurable functions is itself measurable, so must be $\uW_u(t)$ in $u$. Noting that $0\leq\uW\,(t)\leq W(t) \le W(T)$, we have
$$\|\uW\,\|_{T,1} \leq \|W\|_{T,1}<\infty,$$
and hence $\uW\, \in \mathcal{D}_T^\uparrow(L_1)$.
\end{proof}

\begin{lemma}
\label{lem:maptospace}
For any $W\in \mathcal{D}_T^\uparrow(L_1)$
and $\boldsymbol{R}\in\mathcal{R}$, $\boldsymbol{R}W \in \mathcal{D}_T^\uparrow(L_1)$. 
\end{lemma}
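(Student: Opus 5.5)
We need to show that for $W\in\mathcal{D}_T^\uparrow(L_1)$ and $\boldsymbol{R}\in\mathcal{R}$, the function $(\boldsymbol{R}W)_u(t)=\int_0^1 R(u,v)W_v(t)\,dv$ has four properties: it is nonnegative at time $0$, nondecreasing in $t$, càdlàg in $t$ for each $u$, and has finite $\|\cdot\|_{T,1}$ norm. The plan is to verify these one at a time, using only $R\ge0$, $\|\boldsymbol{R}\|_{op}\le 1$, and the monotonicity of $W$.

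\emph{Finiteness.} The first step is measurability and finiteness. Since $W$ is nondecreasing and $W_v(0)\ge0$, we have $0\le W_v(t)\le W_v(T)$ for all $t$, so $\sup_{t}|W_v(t)|=W_v(T)$ and $\|W\|_{T,1}=\int_0^1W_v(T)\,dv<\infty$. By Tonelli (everything is nonnegative),
\[
\int_0^1\sup_{0\le t\le T}(\boldsymbol{R}W)_u(t)\,du=\int_0^1\!\!\int_0^1 R(u,v)W_v(T)\,dv\,du\le \|\boldsymbol{R}\|_{op}\|W\|_{T,1}<\infty.
\]
This is the bound of Lemma~\ref{lem:opnorm}. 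It also shows that $(\boldsymbol{R}W)_u(T)<\infty$ for a.e.\ $u$.

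\emph{Positivity and monotonicity.} For $t_1<t_2$, the difference $(\boldsymbol{R}W)_u(t_2)-(\boldsymbol{R}W)_u(t_1)=\int R(u,v)(W_v(t_2)-W_v(t_1))\,dv$ is $\ge0$, since $R\ge0$ and $W$ is nondecreasing. Likewise $(\boldsymbol{R}W)_u(0)\ge0$ because $W_v(0)\ge0$.

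\emph{Càdlàg paths.} Fix $u$ with $(\boldsymbol{R}W)_u(T)<\infty$. Take $h\downarrow0$. Then $W_v(t+h)\downarrow W_v(t)$ by right-continuity, and $R(u,v)W_v(t+h)\le R(u,v)W_v(T)$, which is integrable in $v$. Monotone (or dominated) convergence therefore gives right-continuity of $(\boldsymbol{R}W)_u$. Left limits exist because a nondecreasing function bounded by $(\boldsymbol{R}W)_u(T)$ always has them. Alternatively, the left limit equals $\int R(u,v)W_v(t-)\,dv$ by monotone convergence.

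\emph{Main obstacle.} The main difficulty is the null set of $u$ on which $\int_0^1 R(u,v)W_v(T)\,dv$ might be infinite. The definition requires $f_u\in\mathcal{D}_T$ for every $u$, and $\|\boldsymbol{R}\|_{op}\le1$ controls only column integrals, not rows. I would handle this in one of two ways. The first option is to note that elements of $\mathcal{D}_T(L_1)$ are understood up to $u$-null sets, as is standard in $L_1$, and redefine $\boldsymbol{R}W$ to be $0$ on that null set. This preserves all of the properties above. The second option, if the kernels in $\mathcal{R}$ are implicitly bounded, is to use that boundedness directly. I would take the first route and remark on it.
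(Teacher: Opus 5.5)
Your proposal is correct and is essentially the paper's proof. It has the same steps: nonnegativity at $t=0$ and monotonicity from $R\ge 0$, right-continuity by dominated convergence with dominating function $R(u,v)W_v(T)$, left limits from monotonicity, and the $\|\cdot\|_{T,1}$ bound via Lemma~\ref{lem:opnorm}. Your extra caveat is well founded and is a point the paper's proof passes over, because $\|\boldsymbol{R}\|_{op}\le 1$ only bounds $\sup_v\int_0^1R(u,v)\,du$, so it gives $\int_0^1R(u,v)W_v(T)\,dv<\infty$ only for a.e.\ $u$. For example, $R(u,v)=v^{-1/2}\mathbf{1}_{\{u=u_0\}}$ and $W_v(t)\equiv v^{-1/2}$ for $v>0$ give $\|\boldsymbol{R}\|_{op}=0$ and $\boldsymbol{R}\in\mathcal{R}$, yet $(\boldsymbol{R}W)_{u_0}\equiv\infty$. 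The statement therefore genuinely needs either your a.e.\ reading or a bounded-kernel hypothesis. Note that only the bounded-kernel route stays consistent with the every-$u$ inequalities used later, such as feasibility $X+(\mathbf{1}-\boldsymbol{F})\widetilde W\ge 0$ in Theorem~\ref{thm:exrefmap}, which resetting values to $0$ on the exceptional null set can break.
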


\begin{proof}
Fix $u\in[0,1]$. Note that $(\boldsymbol{R}W)_u(0) = \int_0^1R(u,v)W_v(0)\,dv \geq 0$. For times $0\leq t_1<t_2\leq T$, 
$$(\boldsymbol{R}W)_u(t_1) = \int_0^1R(u,v)W_v(t_1)\,dv \leq \int_0^1R(u,v)W_v(t_2)\,dv = (\boldsymbol{R}W)_u(t_2).$$
Right continuity follows from
\begin{align*}
\lim_{h\downarrow0}(\boldsymbol{R}W)_u(t+h) &= \lim_{h\downarrow0}\int_0^1R(u,v)W_v(t+h)\,dv
=\int_0^1R(u,v)\lim_{h\downarrow0}W_v(t+h)\,dv\\
&=\int_0^1R(u,v)W_v(t)\,dv
=(\boldsymbol{R}W)_u(t),
\end{align*}
where the second equality follows from the dominated convergence theorem, noting that for $t+h\le T$, $|R(u,v)W_v(t+h)|\leq|R(u,v)W_v(T)|$. Left limits  exist by monotonicity. 

For any jump discontinuity at $t$, we have
$$(\boldsymbol{R}W)_u(t)-(\boldsymbol{R}W)_u(t-) = \int_0^1R(u,v)(W_v(t)-W_v(t-))\,dv\geq0. $$ 

From the properties of $R$, it follows that
$$\|\boldsymbol{R}W\|_{T,1} \le \|\boldsymbol{R}\|_{op} \|W\|_{T,1}<\infty.$$ 
Hence $\boldsymbol{R}W \in \mathcal{D}_T^\uparrow(L_1)$.
\end{proof}

The following lemma provides bounds on the $k$-fold operator of $\boldsymbol{F}$ and $(\mathbf{1}-\boldsymbol{F})^{-1}$, which will be used later to show existence and Lipschitz continuity of the reflection map.

\begin{lemma}
\label{lem: invop}
For $\boldsymbol{F} \in \mathcal{R}$ and any $\gamma\in(0,1)$, there exists $k\ge1$ such that $$\|\boldsymbol{F}^{(k)}\|_{op}\leq\gamma.$$ Furthermore, the inverse operator of $(\mathbf{1}-\boldsymbol{F})$ can be written as
$$(\mathbf{1}-\boldsymbol{F})^{-1} = \sum_{n=0}^\infty \boldsymbol{F}^{(n)}.$$
Furthermore, given such $\gamma$ and $k$, the following holds:
$$\|(\mathbf{1}-\boldsymbol{F})^{-1}\|_{op}\le \frac{k}{1-\gamma}\,.$$
\end{lemma}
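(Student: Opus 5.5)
The plan is to use Gelfand's spectral radius formula, $\rho(\boldsymbol{F}) = \lim_{n\to\infty}\|\boldsymbol{F}^{(n)}\|_{op}^{1/n}$. We view $\boldsymbol{F}$ as a bounded operator on $L_1[0,1]$. There, the norm $\|\boldsymbol{F}\|_{op}=\sup_v\int_0^1|F(u,v)|\,du$ is exactly the induced $L_1\to L_1$ operator norm for nonnegative (indeed any) integral kernels; the upper bound is Lemma \ref{lem:opnorm}, and equality follows by testing against approximate identities concentrated near a point $v$. Since $\rho(\boldsymbol{F})<1$, pick $r\in(\rho(\boldsymbol{F}),1)$. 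Gelfand's formula gives $\|\boldsymbol{F}^{(n)}\|_{op}\le r^n$ for all $n\ge n_0$, so for any $\gamma\in(0,1)$ we may choose $k\ge n_0$ with $r^k\le\gamma$. This is the first claim.

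The main subtlety is the identification of norms. The spectral radius is taken for $\boldsymbol{F}$ acting on $L_1$ (as in the paper's convention), and Gelfand's formula requires the norm to be the induced Banach-algebra norm. Lemma \ref{lem:compOpNorm} gives submultiplicativity of $\|\cdot\|_{op}$ directly. So even without proving equality with the induced norm, I would argue as follows. Fekete's lemma shows that $\lim\|\boldsymbol{F}^{(n)}\|_{op}^{1/n}$ exists. Because the kernel norm dominates the induced norm, this limit is at least $\rho$. The reverse inequality needs the equality of norms, which I would verify by the approximate-identity argument: for $\boldsymbol{F}^{(n)}$, whose kernel is again nonnegative and integrable, take $f=\epsilon^{-1}\mathbf 1_{[v,v+\epsilon]}$ at Lebesgue points $v$ of $v\mapsto\int F^{(n)}(u,v)du$, giving the essential supremum. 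If the sup in the definition is a true sup rather than an ess-sup, I would note that it can be replaced by an ess-sup without affecting any estimate on $\|\cdot\|_{T,1}$. This is the step I expect to need the most care.

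For the Neumann series, write $S_m=\sum_{n=0}^{m}\boldsymbol{F}^{(n)}$. Split $n=qk+j$ with $0\le j<k$. Using submultiplicativity and $\|\boldsymbol{F}\|_{op}\le1$ (so that $\|\boldsymbol{F}^{(j)}\|_{op}\le 1$, with $\|\mathbf 1\|_{op}=1$), we get $\|\boldsymbol{F}^{(n)}\|_{op}\le\|\boldsymbol{F}^{(k)}\|_{op}^q\|\boldsymbol{F}^{(j)}\|_{op}\le\gamma^q$. Hence
\begin{equation*}
\sum_{n=0}^\infty\|\boldsymbol{F}^{(n)}\|_{op}\le\sum_{q=0}^\infty k\gamma^q=\frac{k}{1-\gamma}.
\end{equation*}
The series therefore converges absolutely in operator norm, and its limit $S$ satisfies $\|S\|_{op}\le k/(1-\gamma)$. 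The telescoping identities $(\mathbf 1-\boldsymbol{F})S_m=S_m(\mathbf 1-\boldsymbol{F})=\mathbf 1-\boldsymbol{F}^{(m+1)}$ hold, and $\|\boldsymbol{F}^{(m+1)}\|_{op}\to0$. Passing to the limit, $S$ is a two-sided inverse of $\mathbf 1-\boldsymbol{F}$, acting on $L_1$ and pointwise in $t$ on $\mathcal{D}_T(L_1)$ via Lemma \ref{lem:opnorm}. This gives both the series representation and the bound $\|(\mathbf 1-\boldsymbol{F})^{-1}\|_{op}\le k/(1-\gamma)$.
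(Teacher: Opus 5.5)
Your proposal is correct and follows the paper's route. You use Gelfand's formula for the existence of $k$, the Neumann series via the telescoping identity, and the bound $\sum_n\|\boldsymbol{F}^{(n)}\|_{op}\le k/(1-\gamma)$. Your splitting $n=qk+j$ is a slightly cleaner form of the paper's self-referential inequality, since it gives finiteness and the bound at once. You are also more careful than the paper, which applies Gelfand's formula to ``any norm'' without checking that $\|\cdot\|_{op}$ is the induced $L_1$ norm.

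One small refinement: the paper's $\|\cdot\|_{op}$ uses a true supremum, but you do not need to switch to an essential supremum. By Tonelli, $\int_0^1 F^{(n)}(u,v)\,du=\int_0^1 c_{n-1}(w)F(w,v)\,dw$ with $c_{n-1}(w)=\int_0^1F^{(n-1)}(u,w)\,du$. Hence $\|\boldsymbol{F}^{(n)}\|_{op}\le\|\boldsymbol{F}^{(n-1)}\|_{L_1\to L_1}\|\boldsymbol{F}\|_{op}$, which already gives $\limsup_n\|\boldsymbol{F}^{(n)}\|_{op}^{1/n}\le\rho(\boldsymbol{F})$ for the norm exactly as defined.
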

\begin{proof}
Let $\|\cdot\|$ be any norm on $\boldsymbol{F}$. 
Since $\rho(\boldsymbol{F}) = \lim_{n\rightarrow\infty}\|\boldsymbol{F}^{(n)}\|^{\frac{1}{n}}$, for sufficiently large $n$ we have $\|\boldsymbol{F}^{(n)}\|\leq\delta^n$ for some $\delta\in(\rho(\boldsymbol{F}),1)$. Choosing $\|\cdot\|=\|\cdot\|_{op}$ proves the first statement. For the second statement, note that the partial sums $S_n = \sum_{k=0}^n \boldsymbol{F}^{(k)}$ form a Cauchy sequence, as for any $n>m$,
$$
\|S_n - S_m\| \leq \sum_{k=m+1}^n\|\boldsymbol{F}^{(k)}\| \leq \sum_{k=m+1}^\infty \|\boldsymbol{F}^{(k)}\| \leq \sum_{k=m+1}^\infty \delta^k \to 0 \quad \text{as }\,\, m \to \infty.
$$
Hence $S_n$ converges to $S=\sum_{n=0}^\infty \boldsymbol{F}^{(n)}$. Finally, since $(\mathbf{1}-\boldsymbol{F})S_n = \mathbf{1} - \boldsymbol{F}^{(n+1)}$, taking limits gives $(\mathbf{1}-\boldsymbol{F})S = \mathbf{1}$.
For the last part, mimicking the proof of \cite[Theorem 14.2.5]{Whitt2002}, 
we have 
\begin{equation*}
    \bigg\|\sum_{i=0}^{\infty} \boldsymbol{F}^{(i)}\bigg\|_{op} \le \sum_{i=0}^{\infty} \|\boldsymbol{F}^{(i)}\|_{op} \le \sum_{i=0}^{k-1} \|\boldsymbol{F}^{(i)}\|_{op} + \gamma \sum_{i=0}^{\infty} \|\boldsymbol{F}^{(i)}\|_{op}.
\end{equation*}
Since $\|\boldsymbol{F}^{(i)}\|_{op} \le 1$,  this means
\begin{equation*}
    \|(\mathbf{1}-\boldsymbol{F})^{-1}\|_{op} = \bigg\|\sum_{i=0}^{\infty} \boldsymbol{F}^{(i)}\bigg\|_{op}\le\sum_{i=0}^{\infty} \|\boldsymbol{F}^{(i)}\|_{op} \le \frac{k}{1-\gamma}\,.
\end{equation*}
\end{proof}

\begin{theorem}[Existence of the Reflection Map]
\label{thm:exrefmap}
For any $\boldsymbol{F}\in\mathcal{R}$, $X \in \mathcal{D}_T(L_1)$ and $t\in [0,T]$, let 
$$\widetilde{W}(t) \coloneq (\mathbf{1}-\boldsymbol{F})^{-1} \sup_{0\leq s \leq t}[-X(s)]^+.
$$
Then $\widetilde{W} \in \mathbf{\Psi}_F(X)$.
Furthermore,
$$Y \equiv \Psi_F(X) \in \mathbf{\Psi}_F(X).$$
\end{theorem}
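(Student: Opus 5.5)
We will prove the two claims in order: first that $\widetilde W$ is a feasible regulator, then that the componentwise infimum $Y$ is itself feasible. Set $M(t) \coloneq \sup_{0\le s\le t}[-X(s)]^+$, understood componentwise, so $M_u(t)=\sup_{0\le s\le t}[-X_u(s)]^+$.

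\textbf{Step 1: $M\in\mathcal{D}_T^\uparrow(L_1)$.} We have $M_u(0)=[-X_u(0)]^+\ge0$, and $M_u$ is nondecreasing. It is c\`adl\`ag because the running supremum of a c\`adl\`ag function is c\`adl\`ag. Measurability in $u$ follows by restricting the supremum to rational times together with $t$, which is allowed by right-continuity. Finally $\|M\|_{T,1}\le\|X\|_{T,1}<\infty$.

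\textbf{Step 2: $\widetilde W\in\mathcal{D}_T^\uparrow(L_1)$.} By Lemma \ref{lem: invop}, $(\mathbf{1}-\boldsymbol{F})^{-1}=\sum_{n\ge0}\boldsymbol{F}^{(n)}$, with the series converging in operator norm. Each $\boldsymbol{F}^{(n)}$ has a nonnegative kernel and norm at most $1$, so Lemma \ref{lem:maptospace} (whose proof uses only these two properties) gives $\boldsymbol{F}^{(n)}M\in\mathcal{D}_T^\uparrow(L_1)$. Pass to the sum using the bound $\|(\mathbf{1}-\boldsymbol{F})^{-1}\|_{op}\le k/(1-\gamma)$.
\begin{itemize}
\item Monotonicity and nonnegativity at $0$ are preserved by sums of nonnegative terms.
\item Right-continuity holds by monotone or dominated convergence, with every partial sum dominated by the full sum evaluated at time $T$, which is integrable.
\end{itemize}
We expect this to be the main obstacle. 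The infinite series must be shown to preserve the c\`adl\`ag property for each fixed $u$, and not only in $L_1$. Our first attempt is to observe that $\sum_n \boldsymbol{F}^{(n)}$ acts as an integral operator with nonnegative kernel $\sum_n F^{(n)}(u,v)$, where $F^{(0)}$ is read as the identity. The argument of Lemma \ref{lem:maptospace} should then go through unchanged, using dominated convergence in $v$ with dominator given by the kernel times $M_v(T)$.

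\textbf{Step 3: feasibility of $\widetilde W$.} Since $(\mathbf{1}-\boldsymbol{F})(\mathbf{1}-\boldsymbol{F})^{-1}=\mathbf{1}$, we get
\[
X(t)+(\mathbf{1}-\boldsymbol{F})\widetilde W(t)=X(t)+M(t)\ge X(t)+[-X(t)]^+\ge0.
\]
Hence $\widetilde W\in\mathbf{\Psi}_F(X)$, and in particular this set is nonempty.

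\textbf{Step 4: feasibility of $Y=\inf\mathbf{\Psi}_F(X)$.} By Lemma \ref{eq:infimum}, $Y\in\mathcal{D}_T^\uparrow(L_1)$, so it remains to check $X+(\mathbf{1}-\boldsymbol{F})Y\ge0$. Fix $u,t$ and any $W\in\mathbf{\Psi}_F(X)$. Since $W\ge Y$ and $F\ge0$, we have $(\boldsymbol{F}W)_u(t)\ge(\boldsymbol{F}Y)_u(t)$. Therefore
\[
W_u(t)\ \ge\ -X_u(t)+(\boldsymbol{F}W)_u(t)\ \ge\ -X_u(t)+(\boldsymbol{F}Y)_u(t).
\]
Taking the infimum over $W$ yields $Y_u(t)\ge -X_u(t)+(\boldsymbol{F}Y)_u(t)$, which is exactly $X_u(t)+((\mathbf{1}-\boldsymbol{F})Y)_u(t)\ge0$. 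Thus $Y\in\mathbf{\Psi}_F(X)$.
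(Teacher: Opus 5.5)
Your proposal is correct and follows the paper's proof. Both show feasibility of $\widetilde W$ via the Neumann series $(\mathbf{1}-\boldsymbol{F})^{-1}=\sum_n\boldsymbol{F}^{(n)}$ together with Lemma~\ref{lem:maptospace}, and both show feasibility of the componentwise infimum $Y$ using $Y\le W$ and $F\ge0$. The differences are minor: in Step~4 you take the infimum over $W$ directly in $W_u(t)\ge -X_u(t)+(\boldsymbol{F}Y)_u(t)$, where the paper instead picks an $\epsilon$-optimal $W$ for each $(u,t)$, and your Step~2 treatment of the infinite series is more explicit than the paper's one-line appeal to Lemma~\ref{lem:maptospace}.
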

\begin{proof}
From Lemma \ref{lem: invop}, $\widetilde{W}(t) = \sum_{n=0}^\infty (\boldsymbol{F}^{(n)}\,\, \sup_{0\leq s \leq t}[-X(s)]^+)$. 
Since $X \in \mathcal{D}_T(L_1)$, we have $\sup_{0\leq s \leq t}[-X(s)]^+ \in \mathcal{D}_T^\uparrow(L_1)$. 
It then follows from Lemma \ref{lem:maptospace} that $\widetilde{W} \in \mathcal{D}_T^\uparrow(L_1)$.

It is also clear that for any $t \in [0,T]$,
$$X(t) + (\mathbf{1}-\boldsymbol{F})\widetilde{W}(t) = X(t) + (\mathbf{1}-\boldsymbol{F})(\mathbf{1}-\boldsymbol{F})^{-1} \sup_{0\leq s \leq t}[-X(s)]^+ \geq X(t)-X(t) \geq 0.$$
Thus $\widetilde{W} \in \mathbf{\Psi}_F(X)$.

Since $\mathbf{\Psi}_F(X) \subseteq \mathcal{D}_T^\uparrow(L_1)$, Lemma \ref{eq:infimum} gives that $Y \in \mathcal{D}_T^\uparrow(L_1)$. We are now left to show that $X + (\mathbf{1}-\boldsymbol{F})Y \geq 0$. Fix $u\in [0,1]$ and $t\in [0,T]$. 
By definition of infimum, for fixed $u \in [0,1]$, $\forall \epsilon>0$, $\exists\, W \in \mathbf{\Psi}_F(X)$ such that $W_u(t) \leq Y_u(t) + \epsilon$.  Then, we have
\begin{align*}
    & (X+(\mathbf{1}-\boldsymbol{F})Y)_u(t) = X_u(t) + Y_u(t) - (\boldsymbol{F}Y)_u(t) \\
    & = X_u(t) + Y_u(t) - \int_0^1F(u,v)Y_v(t) \, dv \\
    & \geq X_u(t) + W_u(t) - \int_0^1F(u,v)Y_v(t) \, dv - \epsilon & \mbox{as $W_u(t) \leq Y_u(t) + \epsilon$}\\
    & \geq X_u(t) + W_u(t) - \int_0^1F(u,v)W_v(t) \, dv - \epsilon & \mbox{as $Y(t) \leq W(t)$ and $F(u,v) \geq 0$}\\ 
    & = X_u(t) + W_u(t) - (\boldsymbol{F}W)_u(t) - \epsilon \\ &= (X+(\mathbf{1}-\boldsymbol{F})W)_u(t) - \epsilon \geq -\epsilon & \mbox{as $W \in \mathbf{\Psi}_F(X)$}. 
\end{align*}
 As this holds for any $\epsilon > 0$ and all $u,t$, we conclude that $X+(\mathbf{1}-\boldsymbol{F})Y \geq 0$.
\end{proof}

\begin{definition}
\label{def:pidefD}
For a given $X\in \mathcal{D}_T(L_1)$ and $\boldsymbol{F}\in\mathcal{R}$, define the mapping $\pi_{X,F}:\mathcal{D}^\uparrow_T(L_1)\to\mathcal{D}^\uparrow_T(L_1)$ by
$$\pi_{X,F}(W)(t) = \sup_{0\leq s \leq t}[-X(s) + \boldsymbol{F}W(s)]^+.$$
More specifically, for each $u \in [0,1]$,
$$\pi_{X,F}(W)_u(t) = \sup_{0\leq s \leq t}[-X_u(s) + (\boldsymbol{F}W)_u(s)]^+\,.$$
\end{definition}

The next result 
proves the existence of a fixed point for functions in $\mathcal{D}_T^\uparrow(L_1)$ under the mapping $\pi_{X,F}$, which can be regarded as a  modified version of \cite[Theorem 3.8.1]{edwards2012functional} for the space $\mathcal{D}_T^\uparrow(L_1)$.  The proof is given in Appendix \ref{sec:fixed-point-existence-pf}. 

\begin{theorem}
    \label{thm:monotone-fixed-point}
    Suppose there exist elements $x^0, y^0 \in \mathcal{D}_T^\uparrow(L_1)$ such that
    $$x^0 \le y^0, \quad x^0 \le \pi_{X,F}(x^0), \quad \pi_{X,F}(y^0) \le y^0.$$
    Define $x^n$, $n\ge 1$, by
    $$x^{n+1}=\pi_{X,F}(x^n).$$
    Then there exists $x \in \mathcal{D}_T^\uparrow(L_1)$ such that
    $$x^n \uparrow x, \quad x = \pi_{X,F}(x).$$
\end{theorem}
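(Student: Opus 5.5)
The plan is the standard Kleene/Tarski-type monotone iteration, with the specific structure of $\pi_{X,F}$ supplying the continuity. Two things are needed: first that $\pi_{X,F}$ is monotone and maps $\mathcal{D}_T^\uparrow(L_1)$ into itself, then that the increasing limit is a fixed point.

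\emph{Step 1 (monotonicity).} If $W\le W'$ in $\mathcal{D}_T^\uparrow(L_1)$, then $\boldsymbol{F}W\le\boldsymbol{F}W'$ pointwise, because $F\ge0$. The maps $x\mapsto[x]^+$ and the running supremum are both monotone, so $\pi_{X,F}(W)\le\pi_{X,F}(W')$.

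\emph{Step 2 (the map stays in the space).} By Lemma \ref{lem:maptospace}, $\boldsymbol{F}W\in\mathcal{D}_T^\uparrow(L_1)$, hence $-X+\boldsymbol{F}W\in\mathcal{D}_T(L_1)$. The running supremum of its positive part is nonnegative and nondecreasing in $t$, and it is c\`adl\`ag for each $u$. It is also bounded in $\|\cdot\|_{T,1}$ by $\|X\|_{T,1}+\|W\|_{T,1}$, since $\|\boldsymbol{F}\|_{op}\le1$. Measurability in $u$ should come from taking the supremum over rational $s$ together with $T$, using right-continuity.

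\emph{Step 3 (the iterates increase and converge).} Induction using Step 1 gives
$$x^0\le x^1\le x^2\le\cdots\le y^0.$$
Indeed, $x^0\le\pi(x^0)=x^1$, and $x^n\le y^0$ implies $x^{n+1}=\pi(x^n)\le\pi(y^0)\le y^0$. Define $x_u(t):=\lim_n x^n_u(t)=\sup_n x^n_u(t)$. Lemma \ref{eq:infimum} is stated for infima, but the same kind of argument applies to suprema, with one change: right-continuity is now the delicate part. I would instead obtain membership of $x$ in $\mathcal{D}_T^\uparrow(L_1)$ a posteriori from the fixed-point identity. Monotonicity in $t$, $x(0)\ge0$ and measurability are immediate, and $0\le x\le y^0$ gives $\|x\|_{T,1}\le\|y^0\|_{T,1}$.

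\emph{Step 4 (passing to the limit in $\pi$).} Since $0\le x^n_v(s)\le y^0_v(T)\in L_1$, monotone convergence gives $(\boldsymbol{F}x^n)_u(s)\uparrow(\boldsymbol{F}x)_u(s)$ for every $u,s$. It follows that
$$[-X_u(s)+(\boldsymbol{F}x^n)_u(s)]^+\uparrow[-X_u(s)+(\boldsymbol{F}x)_u(s)]^+.$$
A supremum over $s\le t$ of an increasing sequence commutes with the limit in $n$, because both are suprema and suprema interchange. Therefore
$$x_u(t)=\lim_n x^{n+1}_u(t)=\sup_n\sup_{s\le t}[\cdots]^+=\pi_{X,F}(x)_u(t).$$

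The main obstacle is right-continuity of $x$, since an increasing limit of c\`adl\`ag functions need not be c\`adl\`ag. I would handle it through $x=\pi_{X,F}(x)$. The function $\boldsymbol{F}x$ is right-continuous in $t$ by the dominated convergence argument of Lemma \ref{lem:maptospace}, applied with dominating function $F(u,v)y^0_v(T)$, and right-continuity there requires only that $x_v$ be monotone with right limits, plus pointwise domination. The issue is that this argument uses right-continuity of $x_v$ itself. I would first try to show that $\boldsymbol{F}x$ is right-continuous using only that each $x_v$ is nondecreasing. Then the running supremum of a right-continuous function $-X+\boldsymbol{F}x$ is right-continuous, and the fixed-point identity transfers this property to $x$. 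If that fails, I would bootstrap: replace $x$ by its right-continuous regularization $x_+$. Then $\boldsymbol{F}x_+$ is right-continuous, so $\pi(x_+)$ is c\`adl\`ag. From $x^n\le x\le x_+$ and monotonicity one gets $x\le\pi(x_+)$, and a comparison argument should give $x_+=\pi(x_+)=x$.
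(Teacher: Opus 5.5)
Your Step 4 is correct. It is a cleaner way than the paper's to get the fixed-point identity for the pointwise limit $x$: you use monotone convergence plus an interchange of suprema, instead of the $\varepsilon$--$\delta$ time-shift argument in Lemma~\ref{lem:monotone-fixed-point2}(iii). However, the step you yourself flag, right-continuity of $x$, is left open, and neither of your remedies works as stated.

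\emph{First route.} It rests on a false claim. For nondecreasing $x_v$, dominated convergence gives $(\boldsymbol{F}x)_u(t+)=\int_0^1F(u,v)x_v(t+)\,dv$. So $\boldsymbol{F}x$ is right-continuous at $t$ only when $x_v(t+)=x_v(t)$ for $F(u,\cdot)\,dv$-a.e.\ $v$; take $x_v=\mathbf{1}_{(0,T]}$ for all $v$ to see this fail. Monotonicity alone cannot give it, so the circularity you noticed is real.

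\emph{Bootstrap.} The inequality $x\le\pi_{X,F}(x_+)$ only yields $x_+\le\pi_{X,F}(x_+)$ after taking right limits. The substantive reverse inequality $\pi_{X,F}(x_+)\le x_+$ is not addressed. Your claim that ``a comparison argument should give $x_+=x$'' is also unjustified: that equality does not follow from monotonicity.

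To close the gap for the statement as the paper means it, note that $x^n\uparrow x$ there means the supremum in $\mathcal{D}_T^\uparrow(L_1)$. By Lemma~\ref{lem:monotone-fixed-point1}, this is the right-regularization $x_+$ of your pointwise limit, so you only need $\pi_{X,F}(x_+)\le x_+$. For $s\le t<T$, use $(\boldsymbol{F}x_+)_u(s)=\lim_{h\downarrow0}(\boldsymbol{F}x)_u(s+h)$, right-continuity of $X_u$, and your identity $x=\pi_{X,F}(x)$:
\[
[-X_u(s)+(\boldsymbol{F}x_+)_u(s)]^+=\lim_{h\downarrow0}[-X_u(s+h)+(\boldsymbol{F}x)_u(s+h)]^+\le x_u(s+)\le (x_+)_u(t).
\]
This is exactly the content of the paper's Lemma~\ref{lem:monotone-fixed-point2}(iii), and it uses no spectral condition.

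If you want the stronger pointwise reading $x=x_+$, you need $\rho(\boldsymbol{F})<1$. Fix $t$ and take right limits in $x=\pi_{X,F}(x)$. This gives $x_u(t+)\le\max\{x_u(t),[-X_u(t)+(\boldsymbol{F}x_+)_u(t)]^+\}$, while $x_u(t)\ge[-X_u(t)+(\boldsymbol{F}x)_u(t)]^+$. Hence $\Delta:=x(t+)-x(t)$ satisfies $0\le\Delta\le\boldsymbol{F}\Delta$, with $\Delta\le y^0(t)\in L_1$. Iterating and using Lemma~\ref{lem: invop}, $\|\Delta\|_1\le\|\boldsymbol{F}^{(k)}\|_{op}\|\Delta\|_1\le\gamma\|\Delta\|_1$, so $\Delta=0$ a.e. Then $\boldsymbol{F}\Delta\equiv0$, and $\Delta\equiv0$ everywhere. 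This is the ingredient missing from your ``comparison argument'', and it also repairs your first route.
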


\begin{lemma}
\label{lem:feasregineq}
The feasible regulator can be equivalently written as 
$$\mathbf{\Psi}_F(X) = \{W\in\mathcal{D}_T^\uparrow(L_1): W\geq\pi_{X,F}(W)\}.$$
\end{lemma}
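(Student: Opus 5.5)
The plan is to prove the two inclusions separately, working pointwise in $u\in[0,1]$ and $t\in[0,T]$. The only structural facts needed are that $W\in\mathcal{D}_T^\uparrow(L_1)$ is nondecreasing and nonnegative in $t$, and that $\boldsymbol{F}\ge 0$. Lemma \ref{lem:maptospace} gives that $\boldsymbol{F}W\in\mathcal{D}_T^\uparrow(L_1)$, so $(\boldsymbol{F}W)_u(\cdot)$ is nondecreasing in time. This makes $\pi_{X,F}(W)$ well defined and places it in $\mathcal{D}_T^\uparrow(L_1)$.

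For ``$\subseteq$'', let $W\in\mathbf{\Psi}_F(X)$. Then $X_u(s)+W_u(s)-(\boldsymbol{F}W)_u(s)\ge 0$ for all $s$, so $W_u(s)\ge -X_u(s)+(\boldsymbol{F}W)_u(s)$. Since also $W_u(s)\ge 0$, we get $W_u(s)\ge[-X_u(s)+(\boldsymbol{F}W)_u(s)]^+$. Monotonicity of $W_u$ in time gives $W_u(t)\ge W_u(s)$ for every $s\le t$. Taking the supremum over $s\in[0,t]$ yields $W_u(t)\ge \pi_{X,F}(W)_u(t)$, that is, $W\ge\pi_{X,F}(W)$.

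For ``$\supseteq$'', let $W\in\mathcal{D}_T^\uparrow(L_1)$ with $W\ge\pi_{X,F}(W)$. Taking $s=t$ inside the supremum gives
\[
W_u(t)\ \ge\ \pi_{X,F}(W)_u(t)\ \ge\ [-X_u(t)+(\boldsymbol{F}W)_u(t)]^+\ \ge\ -X_u(t)+(\boldsymbol{F}W)_u(t).
\]
Rearranging, $X_u(t)+W_u(t)-(\boldsymbol{F}W)_u(t)\ge 0$ for all $u,t$, which is exactly $X+(\mathbf{1}-\boldsymbol{F})W\ge 0$. Hence $W\in\mathbf{\Psi}_F(X)$.

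Neither direction presents a real obstacle. The only point requiring care is that the ``$\subseteq$'' direction uses the monotonicity of $W$ to pass from the pointwise inequality at each time $s$ to the running supremum. One should also check that $(\boldsymbol{F}W)_u(s)$ is finite, so the expressions make sense for every $u$ rather than only almost every $u$; this holds because $W_v(s)\le W_v(T)$ and $\int_0^1F(u,v)W_v(T)\,dv<\infty$ under the setting of Lemma \ref{lem:maptospace}.
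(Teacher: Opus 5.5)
Your proof is correct and is essentially the paper's own argument. For ``$\subseteq$'' the paper likewise uses nonnegativity and monotonicity of $W$ (packaged as $W(t)=\sup_{0\le s\le t}[W(s)]^+$) to dominate the running supremum. For ``$\supseteq$'' it takes $s=t$ inside the supremum, exactly as you do.

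One small aside on your final remark. Since $\|\boldsymbol{F}\|_{op}$ controls only $\sup_{v}\int_0^1 F(u,v)\,du$, it gives finiteness of $(\boldsymbol{F}W)_u(s)$ only for a.e.\ $u$, not for every $u$. The lemma does not need this, though: if $(\boldsymbol{F}W)_u(s)=+\infty$, then $W$ fails both membership conditions at once.
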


\begin{proof}
First take $W\in\mathbf{\Psi}_F(X)$, then $W\geq-X+\boldsymbol{F}W$. As $W\in\mathcal{D}_T^\uparrow(L_1)$, $W(t) = \sup_{0\leq s \leq t}[W(s)]^+$ and so $W\geq\pi_{X,F}(W)$.
For the other direction, assume $W\geq\pi_{X,F}(W)$. Then $$W(t)\geq\sup_{0\leq s \leq t}[-X(s) + \boldsymbol{F}W(s)]^+\geq -X(t)+\boldsymbol{F}W(t)$$ and so it follows that $W\in\mathbf{\Psi}_F(X)$.
\end{proof}

\begin{theorem}[Uniqueness of the Reflection Map]
\label{thm:unqfixpoint}
    For a given $X\in\mathcal{D}_T(L_1)$ and $\boldsymbol{F}\in\mathcal{R}$, there exists a unique fixed point $W^*$ of the map $\pi_{X,F}$, that is,
    \begin{equation*}
        W^*_u = (\pi_{X,F}(W^*))_u, \quad \forall\,u\in[0,1].
    \end{equation*}
    Moreover, the regulator map $\Psi_F(X)$ is equal to the fixed point $W^*$, that is,
    \begin{equation*}
        (\Psi_F(X))_u = W^*_u, \quad \forall\,u\in[0,1].
    \end{equation*}
\end{theorem}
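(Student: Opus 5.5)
Throughout, write $Y=\Psi_F(X)$ and use the order $\le$ on $\mathcal{D}_T^\uparrow(L_1)$. The plan is to show that $Y$ is itself a fixed point of $\pi_{X,F}$. Uniqueness will then come from monotonicity of $\pi_{X,F}$ combined with the minimality of $Y$, plus a contraction-type argument using $\|\boldsymbol{F}^{(k)}\|_{op}\le\gamma$ from Lemma \ref{lem: invop}.

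\textbf{Step 1 (monotonicity).} Since $F\ge0$, if $W\le W'$ then $\boldsymbol{F}W\le\boldsymbol{F}W'$. Because $x\mapsto[x]^+$ and the running supremum are nondecreasing, this gives $\pi_{X,F}(W)\le\pi_{X,F}(W')$.

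\textbf{Step 2 ($Y$ is a fixed point).} By Theorem \ref{thm:exrefmap}, $Y\in\mathbf{\Psi}_F(X)$, so Lemma \ref{lem:feasregineq} gives $\pi_{X,F}(Y)\le Y$. Applying Step 1 yields $\pi_{X,F}(\pi_{X,F}(Y))\le\pi_{X,F}(Y)$. By Lemma \ref{lem:feasregineq} again, $\pi_{X,F}(Y)\in\mathbf{\Psi}_F(X)$. We need $\pi_{X,F}(Y)\in\mathcal{D}^\uparrow_T(L_1)$ for this, which follows from Lemma \ref{lem:maptospace}, the c\`adl\`ag property of running suprema, and the bound $\|\pi_{X,F}(Y)\|_{T,1}\le\|X\|_{T,1}+\|Y\|_{T,1}$. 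Minimality of $Y$ as the infimum then gives $Y\le\pi_{X,F}(Y)$, so $Y=\pi_{X,F}(Y)$. This proves existence of a fixed point and identifies it with $\Psi_F(X)$. Alternatively, Theorem \ref{thm:monotone-fixed-point} with $x^0=0$ and $y^0=Y$ also gives a fixed point, but the direct argument suffices.

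\textbf{Step 3 (uniqueness).} Let $W^*$ be any fixed point. Then $W^*\ge\pi_{X,F}(W^*)$, so $W^*\in\mathbf{\Psi}_F(X)$ by Lemma \ref{lem:feasregineq}, and hence $W^*\ge Y$. For the reverse inequality, compare the two fixed points. Using $|\sup_s a^+(s)-\sup_s b^+(s)|\le\sup_s|a(s)-b(s)|$ and $F\ge0$, for each $u$ and $t$,
$$0\le W^*_u(t)-Y_u(t)\le\sup_{s\le t}\big(\boldsymbol{F}(W^*-Y)\big)_u(s)=\big(\boldsymbol{F}(W^*-Y)\big)_u(t).$$
The last equality holds because $\boldsymbol{F}(W^*-Y)$ is nondecreasing in time. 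This requires the difference $W^*-Y$ to be nondecreasing in time, which is not automatic. If it fails, we keep the supremum and set $D_u=\sup_{s\le T}(W^*_u(s)-Y_u(s))\ge0$; then pointwise $D\le\boldsymbol{F}D$ as functions of $u$. Iterating $k$ times gives $D\le\boldsymbol{F}^{(k)}D$ pointwise, since $\boldsymbol{F}\ge0$ preserves order. Integrating over $u$ yields
$$\|D\|_1\le\|\boldsymbol{F}^{(k)}\|_{op}\|D\|_1\le\gamma\|D\|_1,$$
with $\|D\|_1\le\|W^*\|_{T,1}+\|Y\|_{T,1}<\infty$. Hence $\|D\|_1=0$, so $W^*=Y$ for a.e.\ $u$.

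\textbf{Main obstacle: a.e.\ versus every $u$.} Step 3 only gives agreement for almost every $u$. To upgrade, note that for a fixed point $W^*_u(t)=\sup_{s\le t}[-X_u(s)+(\boldsymbol{F}W^*)_u(s)]^+$. Here $\boldsymbol{F}W^*$ depends on $W^*$ only through its a.e.\ class, because it is an integral in $v$. So $\boldsymbol{F}W^*=\boldsymbol{F}Y$ for every $u$, and applying $\pi_{X,F}$ to both fixed points gives $W^*_u=Y_u$ for every $u\in[0,1]$. Verifying the measurability and integrability needed for $D$, namely that the supremum over $s$ can be taken over rationals and the right endpoint by right-continuity, is routine.
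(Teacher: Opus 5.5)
Your proposal is correct, and it takes a genuinely different route from the paper in both halves.

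\textbf{Existence and identification with $\Psi_F(X)$.} The paper runs a monotone iteration. It applies Theorem \ref{thm:monotone-fixed-point} with $x^0=0$ and $y^0$ a feasible regulator to get $\pi^n_{X,F}(0)\uparrow W^*=\pi_{X,F}(W^*)$. It then uses the sandwich $\pi^n_{X,F}(0)\le\pi^n_{X,F}(\Psi_F(X))\le\Psi_F(X)\le W^*$. That route needs the order-continuity of $\pi_{X,F}$ along increasing sequences (Lemma \ref{lem:monotone-fixed-point2}(iii), with its delicate right-limit argument) together with Lemma \ref{lem:monotone-fixed-point1}. In return it yields the monotone pointwise approximation $\pi^n_{X,F}(0)\uparrow\Psi_F(X)$.

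Your Step 2 is instead the Knaster--Tarski argument. From $\pi_{X,F}(Y)\le Y$ and monotonicity, $\pi_{X,F}(Y)$ is feasible, so minimality of $Y$ forces $Y\le\pi_{X,F}(Y)$. This needs only monotonicity of $\pi_{X,F}$ and the fact that it preserves $\mathcal{D}_T^\uparrow(L_1)$. So Theorem \ref{thm:monotone-fixed-point} and its supporting lemmas are not needed for this statement.

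\textbf{Uniqueness.} Your Step 3 does something the paper's proof of this theorem does not. The paper only shows that the particular fixed point $\lim_n\pi^n_{X,F}(0)$ equals $\Psi_F(X)$. Since every fixed point is feasible, this shows $\Psi_F(X)$ is the least fixed point. But the paper uses $\rho(\boldsymbol{F})<1$ only to produce a feasible regulator.

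Some genuine use of the spectral condition is unavoidable here. With $F\equiv 1$ and $X\equiv 0$, every $W_u(t)=c(t)$ with $c$ nonnegative, nondecreasing and c\`adl\`ag is a fixed point.

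Your estimate $D\le\boldsymbol{F}D\le\boldsymbol{F}^{(k)}D$, with $\|\boldsymbol{F}^{(k)}\|_{op}\le\gamma<1$, is precisely the contraction that excludes larger fixed points. It is the same mechanism as the paper's later Lemma \ref{lem:pikcontraction}. Your upgrade from a.e.\ $u$ to every $u$ rests on the same observation as the Remark following the theorem: $\boldsymbol{F}W$ depends only on the a.e.\ class of $W$.

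So your version is self-contained for both the existence and the uniqueness claims of the statement.
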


\begin{proof}
By Theorem \ref{thm:exrefmap}, $\exists\, W\in \mathbf{\Psi}_F(X)$.
By Lemma \ref{lem:feasregineq}, we have $\pi_{X,F}(W) \le W$. 
Clearly $0 \le W$ and $0 \le \pi_{X,F}(0)$.
Applying Theorem \ref{thm:monotone-fixed-point} with $x^0=0$ and $y^0=W$ gives the existence of a fixed point $W^* \in \mathcal{D}_T^\uparrow(L_1)$ such that $\pi^n_{X,F}(0) \uparrow W^*$ and $W^* = \pi_{X,F}(W^*)$.

Since $W^*\geq\pi_{X,F}(W^*)$, we also have $W^*\in\mathbf{\Psi}_F(X)$. 
By definition of $\Psi_F(X)$, clearly $0 \le \pi_{X,F}(\Psi_F(X)) \le \Psi_F(X)\leq W^*$. 
Therefore, 
$$\pi^n_{X,F}(0) \leq \pi^n_{X,F}(\Psi_F(X)) \leq \Psi_F(X) \quad \mbox{for all } n \in \mathbb{N}.$$
Letting $n\to\infty$ yields $W^*\leq\Psi_F(X)$ and hence $W^*=\Psi_F(X)$. 
\end{proof}

\begin{remark}
    Although $\pi_{X,F}$ is defined on $\mathcal{D}^\uparrow_T(L_1)$, the equality in Theorem \ref{thm:unqfixpoint} holds for every $u \in [0,1]$, that is, the uniqueness is understood in the sense of everywhere (instead of a.e.).
    That said, the uniqueness understood in the sense of a.e.\ still holds. 
    To see this, suppose $W \in \mathcal{D}^\uparrow_T(L_1)$ is such that $W_u = (\pi_{X,F}(W))_u$ for a.e.\ $u\in[0,1]$. 
    Then by modifying $W_u$ on a negligible set of $u$, one can get some $\widetilde{W} \in \mathcal{D}^\uparrow_T(L_1)$ such that $\widetilde{W}_u=W_u$ for a.e.\ $u\in[0,1]$ and $\widetilde{W}_u = (\pi_{X,F}(\widetilde{W}))_u \,\forall\, u\in[0,1]$.
    It then follows from the uniqueness in Theorem \ref{thm:unqfixpoint} that $W^*_u = \widetilde{W}_u \,\forall\, u\in[0,1]$ and hence $W^*_u=W_u$ for a.e.\ $u\in[0,1]$.
\end{remark}

\begin{theorem}[Complementarity Characterization] \label{thm:compchar}
Take $Y\in\mathbf{\Psi}_F(X)$ and let $Z = X+(\mathbf{1}-\boldsymbol{F})Y$. Then, 
$Y=\Psi_F(X)$ if and only if the pair $(Y,Z)$ satisfies the complementarity property:
$$\int_0^{\infty}Z_u(t)dY_u(t) = 0 \quad \mbox{for all }u\in[0,1]\,.$$
\end{theorem}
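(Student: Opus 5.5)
We prove the two directions separately. For the forward implication we use the fixed-point characterization of Theorem \ref{thm:unqfixpoint}, and for the reverse implication a minimality argument against an arbitrary feasible regulator. Throughout we fix $u\in[0,1]$ and interpret $\int Z_u\,dY_u$ as a Lebesgue--Stieltjes integral over $[0,T]$, with the convention that the atom at $0$ carries the jump $Y_u(0)$. Since $Z\ge 0$ and $dY_u\ge 0$, the integral vanishes iff $Y_u$ increases only on $\{t: Z_u(t)=0\}$.

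\emph{($\Rightarrow$)} Suppose $Y=\Psi_F(X)$. By Theorem \ref{thm:unqfixpoint} we have
$$Y_u(t)=\sup_{0\le s\le t}[-X_u(s)+(\boldsymbol{F}Y)_u(s)]^+.$$
Set $a_u(s):=-X_u(s)+(\boldsymbol{F}Y)_u(s)$. Then $Z_u(t)=Y_u(t)-a_u(t)$, so $Y_u$ is the one-dimensional Skorokhod regulator of the c\`adl\`ag path $-a_u$, which is c\`adl\`ag by Lemma \ref{lem:maptospace}. The standard one-dimensional argument now applies.

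Suppose $t$ lies in the support of $dY_u$ and $Z_u(t)>0$, i.e. $Y_u(t)>a_u(t)$. Right continuity gives $Y_u(s)>a_u(s)+\epsilon$ on some $[t,t+\delta)$. For $s$ in this interval, $Y_u(s)=\max\{Y_u(t), \sup_{t<r\le s}a_u(r)^+\}$, and the second term is below $Y_u(s)-\epsilon$, so $Y_u(s)=Y_u(t)$. Hence $Y_u$ is flat just after $t$.

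A jump at $t$ means $Y_u(t)=a_u(t)^+$. If $Y_u(t)>0$ this gives $Z_u(t)=0$; if $Y_u(t)=0$ there is no increase at $t$.

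Finally, increases accumulating from the left must be handled. Here I would argue directly with the Stieltjes measure: on the open set $\{s: Z_u(s)>0\}$, which by right continuity is a countable union of half-open intervals, $Y_u$ is constant. Verifying this cleanly for c\`adl\`ag (not continuous) paths is the main technical step, and I would mirror the proof in \cite{Whitt2002} for the one-dimensional reflection.

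\emph{($\Leftarrow$)} Suppose $Y\in\mathbf{\Psi}_F(X)$ satisfies complementarity, and let $W\in\mathbf{\Psi}_F(X)$ be arbitrary. We show $Y\le W$; then $Y\le\Psi_F(X)$, and since $Y$ is feasible, equality follows.

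Let $\tau_u(t):=\sup\{s\le t: s \text{ is a point of increase of } Y_u\}$. If no such point exists, $Y_u(t)=0\le W_u(t)$. Otherwise complementarity gives $Z_u=0$ at the increase points (or in a left-limit sense), so
$$Y_u(\tau)=-X_u(\tau)+(\boldsymbol{F}Y)_u(\tau)\quad\text{or the corresponding left-limit version},$$
while $W_u(\tau)\ge -X_u(\tau)+(\boldsymbol{F}W)_u(\tau)$. Subtracting and using $Y_u(t)=Y_u(\tau)$, monotonicity of $W$, and positivity of $F$ gives
$$[Y_u(t)-W_u(t)]^+\le (\boldsymbol{F}[Y-W]^+)_u(\tau)\le \sup_{s\le t}(\boldsymbol{F}[Y-W]^+)_u(s).$$
Let $D(t):=\sup_{s\le t}[Y(s)-W(s)]^+$, which is nondecreasing in $t$ since $Y,W$ are increasing. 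The inequality becomes $D\le \boldsymbol{F}D$ pointwise. Iterating $k$ times gives $D\le\boldsymbol{F}^{(k)}D$, and integrating yields $\|D\|_{T,1}\le\gamma\|D\|_{T,1}$ by Lemma \ref{lem: invop}. Hence $D=0$ a.e. in $u$, so $\boldsymbol{F}D=0$ and therefore $D=0$ everywhere.

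The main obstacle is the careful handling of jumps and left limits at the last increase time in the c\`adl\`ag setting, in both directions.
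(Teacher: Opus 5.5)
Your argument is correct, but it uses the two available tools in the opposite directions from the paper. For ($\Rightarrow$), the paper argues only from minimality of the infimum. If $Z_u(t)>0$ while $Y_u$ increases at $t$, it lowers $Y_u$ on $[t,t+\delta)$ in the single coordinate $u$, splitting into jump and no-jump cases. Since lowering $Y$ can only lower $\boldsymbol{F}Y$, feasibility is preserved, which contradicts minimality. You instead use the identity $\Psi_F(X)=\pi_{X,F}(\Psi_F(X))$ from Theorem \ref{thm:unqfixpoint} to decouple the coordinates. Then $Y_u$ is the classical one-dimensional regulator of $X_u-(\boldsymbol{F}Y)_u$, and complementarity is inherited from the scalar theory. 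This needs only the identification half of that theorem and hands the jump and left-accumulation bookkeeping to a standard result. For ($\Leftarrow$), the paper shows $\pi_{X,F}(Y)=Y$ and invokes uniqueness of the fixed point. You instead compare $Y$ with an arbitrary feasible $W$, obtain $D\le\boldsymbol{F}D$ for the running excess $D=\sup_{s\le\cdot}[Y(s)-W(s)]^+$, and kill $D$ with $\|\boldsymbol{F}^{(k)}\|_{op}\le\gamma$. Your route is more self-contained and shows exactly where $\rho(\boldsymbol{F})<1$ enters. In particular, it does not lean on uniqueness of fixed points of $\pi_{X,F}$. The proof of Theorem \ref{thm:unqfixpoint} as written only identifies the limit of $\pi^n_{X,F}(0)$ with $\Psi_F(X)$; an arbitrary fixed point needs the contraction estimate of Lemma \ref{lem:pikcontraction}. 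The paper's route is shorter once that theorem is granted. There are two small fixes. First, $\{s:Z_u(s)>0\}$ need not be open for c\`adl\`ag $Z_u$, since each point only has a right neighbourhood $[t,t+\delta_t)$ inside it. It is still a countable disjoint union of intervals $[a,b)$ or $(a,b)$ on which $Y_u$ is constant, which is all you use. Second, before cancelling in $\|D\|_{T,1}\le\gamma\|D\|_{T,1}$, note that $0\le D\le Y$ (because $W\ge 0$), so $\|D\|_{T,1}<\infty$.
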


\begin{proof}
$(\implies)$ Suppose that $(Y,Z)$ fail to satisfy the complementarity property. 
Then there are some $t\in[0,T]$ and $u\in[0,1]$ such that $Z_u(t)>0$ and $Y_u(t)$ increases at $t$. 
We consider the following two cases: \\\\
\textbf{Case 1:} Suppose that $Y_u(t)>Y_u(t-)$. Then there must exist some $\epsilon>0$ and $\delta>0$ such that $Y_u(t) - Y_u(t-)>\epsilon$ and $Z_u(s)\geq\epsilon$ for all $t\leq s\leq t+\delta$. Define $\widetilde{Y}$ by
\begin{equation*}
\widetilde{Y}_u(s)=\begin{cases}
    Y_u(s), & 0\leq s< t,\\
    Y_u(s) - \epsilon, & t\leq s<t+\delta,\\
    Y_u(s) & t+\delta\leq s,
  \end{cases}
\end{equation*}
and let $\widetilde{Y}_v = Y_v$ for all $v\neq u$. Then $\widetilde{Y}$ is still in $\mathcal{D}_T^\uparrow(L_1)$.\\\\
\textbf{Case 2:} Suppose that $Y_u(t)=Y_u(t-)$. Then there must exist some $\epsilon>0$ and $\delta>0$ such that $Z_u(s)\geq\epsilon$ and $0< Y_u(s)-Y_u(t)\leq\epsilon$ for $t<s<t+\delta$. Define $\widetilde{Y}$ by
\begin{equation*}
\widetilde{Y}_u(s)=\begin{cases}
    Y_u(s), & 0\leq s\leq t,\\
    Y_u(t), & t< s<t+\delta,\\
    Y_u(s) & t+\delta\leq s,
  \end{cases}
\end{equation*}
and let $\widetilde{Y}_v = Y_v$ for all $v\neq u$. Clearly we still have $\widetilde{Y}\in\mathcal{D}_T^\uparrow(L_1)$.\\\\
In both of these cases, for $s\in[t,t+\delta)$,
\begin{align*}
(X + (\mathbf{1}-\boldsymbol{F})\widetilde{Y})_u(s) &= X_u(s) + ((\mathbf{1}-\boldsymbol{F})Y)_u(s) + ((\mathbf{1}-\boldsymbol{F})(\widetilde{Y}-Y))_u(s) \\
&= Z_u(s) + (\widetilde{Y}_u(s)-Y_u(s)) + (\boldsymbol{F}(Y-\widetilde{Y}))_u(s) \\
&\geq \epsilon - \epsilon + 0 =0,
\end{align*}
and for $v\neq u$,
\begin{align*}
(X + (\mathbf{1}-\boldsymbol{F})\widetilde{Y})_v(s) &= X_v(s) + ((\mathbf{1}-\boldsymbol{F})Y)_v(s) + ((\mathbf{1}-\boldsymbol{F})(\widetilde{Y}-Y))_v(s) \\
&= Z_v(s) + 0 + (\boldsymbol{F}(Y-\widetilde{Y}))_u(s) \\
&\geq 0.
\end{align*}
So $X+(\mathbf{1}-\boldsymbol{F})\widetilde{Y}\geq0$ and thus $\widetilde{Y}\in\mathbf{\Psi}_F(X)$. However, $\widetilde{Y}\leq Y$ and $\widetilde{Y}_u(s)<Y_u(s)$ for $s\in[t,t+\delta)$, contradicting minimality of $Y$.\\\\
$(\impliedby)$ Assume $(Y,Z)$ satisfies the complementarity property and write $\hat{Y} = \pi_{X,F}(Y)$. By Theorem \ref{thm:unqfixpoint}, we need only to show that $\hat{Y} = Y$. Since $Y\in\mathbf{\Psi}_F(X)$, by Lemma \ref{lem:feasregineq} we have $Y\geq\hat{Y}$. Suppose that there exist some $u$ and $t$ such that $Y_u(t)>\hat{Y}_u(t)$. Then there must also exist some $t_0$ such that $Y_u(t_0)>\hat{Y}_u(t_0)$ and $dY_u(t_0)>0$. 
But then
$$Z_u(t_0) = Y_u(t_0)-(-X_u(t_0)+(\boldsymbol{F}Y)_u(t_0))\geq Y_u(t_0)-\hat{Y}_u(t_0)>0,$$
contradicting the complementarity property.
\end{proof}

We finish this subsection by showing that the solution $(Y,Z)$ to \eqref{eq:cond1op}--\eqref{eq:cond3op} will always be continuous in $t$ when the free process $X$ is continuous in $t$. 

\begin{lemma} \label{lem:contXcontY}
Suppose $X\in\mathcal{C}_T(L_1)$ and $F\in\mathcal{R}$. Then there exists a unique solution $(Y,Z)\in\mathcal{C}_T^\uparrow(L_1)\times\mathcal{C}_T(L_1)$ to the system defined by \eqref{eq:cond1op}--\eqref{eq:cond3op}.
\end{lemma}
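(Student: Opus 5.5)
By Theorem \ref{thm:summary}, a unique solution $(Y,Z)\in\mathcal{D}_T^\uparrow(L_1)\times\mathcal{D}_T(L_1)$ of \eqref{eq:cond1op}--\eqref{eq:cond3op} already exists, with $Y=\Psi_F(X)$. Uniqueness within the smaller class $\mathcal{C}_T^\uparrow(L_1)\times\mathcal{C}_T(L_1)$ is then inherited. So the only task is to show that $t\mapsto Y_u(t)$ and $t\mapsto Z_u(t)$ are continuous for every $u\in[0,1]$. My plan is to rule out jumps of $Y$ directly, using the fixed-point characterization of Theorem \ref{thm:unqfixpoint},
$$Y_u(t)=\sup_{0\le s\le t}\big[-X_u(s)+(\boldsymbol{F}Y)_u(s)\big]^+,$$
together with the contraction property of $\boldsymbol{F}^{(k)}$ from Lemma \ref{lem: invop}. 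I will not try to pass to the limit in the monotone iterates $\pi^n_{X,F}(0)$: each iterate is continuous, but an increasing limit of continuous functions need not be.

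\textbf{Step 1: jump inequality.} Fix $t\in(0,T]$ and set $J_u:=Y_u(t)-Y_u(t-)\ge0$. Since $0\le J_u\le Y_u(T)$ and $\|Y\|_{T,1}<\infty$, we have $J\in L_1$. Put $g_u(s):=-X_u(s)+(\boldsymbol{F}Y)_u(s)$, which is c\`adl\`ag in $s$ (see Lemma \ref{lem:maptospace}). For a c\`adl\`ag $g$ one has $\sup_{s\le t}g(s)=\max\big(\sup_{s<t}g(s),\,g(t)\big)$ and $\sup_{s<t}g(s)\ge g(t-)$. Hence
$$\sup_{s\le t}g(s)-\sup_{s<t}g(s)\le[g(t)-g(t-)]^+ .$$
The map $x\mapsto x^+$ is monotone and $1$-Lipschitz, so the same bound holds after taking positive parts. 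Moreover, $Y_u(t-)=\sup_{s<t}[g_u(s)]^+$, which follows from the left limit of a running supremum. Because $X_u$ is continuous, $g_u(t)-g_u(t-)=(\boldsymbol{F}Y)_u(t)-(\boldsymbol{F}Y)_u(t-)$. By dominated convergence, with dominating function $F(u,v)Y_v(T)$ exactly as in Lemma \ref{lem:maptospace}, this difference equals $(\boldsymbol{F}J)_u$, which is nonnegative. Combining these facts gives
$$J_u\le(\boldsymbol{F}J)_u\quad\text{for every }u\in[0,1].$$

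\textbf{Step 2: killing the jump.} Because $\boldsymbol{F}\ge0$, the inequality can be iterated to get $J\le\boldsymbol{F}^{(k)}J$ pointwise, for every $k\ge1$. Choose $k$ as in Lemma \ref{lem: invop} so that $\|\boldsymbol{F}^{(k)}\|_{op}\le\gamma<1$. Taking $L_1$ norms gives $\|J\|_1\le\gamma\|J\|_1$, so $\|J\|_1=0$ and $J=0$ a.e. Then $(\boldsymbol{F}J)_u=0$ for every $u$, and Step 1 yields $J_u=0$ for \emph{every} $u$, not just almost every $u$. Hence each $Y_u$ is continuous, i.e.\ $Y\in\mathcal{C}_T^\uparrow(L_1)$.

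\textbf{Step 3: continuity of $Z$.} By the dominated convergence argument of Lemma \ref{lem:maptospace}, now applied to continuous $Y_v$, each $(\boldsymbol{F}Y)_u$ is continuous in $t$. Therefore $Z=X+(\mathbf{1}-\boldsymbol{F})Y\in\mathcal{C}_T(L_1)$.

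The main obstacle is Step 1, namely justifying the jump bound on the running supremum of the c\`adl\`ag function $g_u$ and the interchange $\Delta(\boldsymbol{F}Y)_u(t)=(\boldsymbol{F}\Delta Y(t))_u$. Both are elementary, but they must be checked carefully for every $u$ rather than almost every $u$, so that the conclusion holds everywhere.
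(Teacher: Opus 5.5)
Your proof is correct and is essentially the paper's argument: both show that the jump function $J_u=Y_u(t)-Y_u(t-)\ge 0$ satisfies $(\mathbf{1}-\boldsymbol{F})J\le 0$, and then use $\rho(\boldsymbol{F})<1$ to force $J=0$. The paper gets the inequality from complementarity ($Z_u(t)=0$ at a jump, so $J_u-(\boldsymbol{F}J)_u=-Z_u(t-)\le 0$) and concludes via positivity of $(\mathbf{1}-\boldsymbol{F})^{-1}$, whereas you get it from the fixed-point formula and conclude via $\|\boldsymbol{F}^{(k)}\|_{op}\le\gamma$, which also makes explicit the interchange $(\boldsymbol{F}Y)_u(t)-(\boldsymbol{F}Y)_u(t-)=(\boldsymbol{F}J)_u$ and the upgrade from a.e.\ to every $u$ that the paper leaves implicit.
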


\begin{proof}
From the above work, since $\mathcal{C}_T(L_1) \subset \mathcal{D}_T(L_1)$, it is guaranteed that there exists a unique solution $(Y,Z)\in\mathcal{D}_T^\uparrow(L_1)\times\mathcal{D}_T(L_1)$. Assume for a contradiction that $Y\notin \mathcal{C}_T^\uparrow(L_1)$. Then there exists some ${u^*}\in[0,1]$, $ t\in[0,T]$ such that $Y_{u^*}(t) - Y_{u^*}(t-) > 0$. By the complementarity property \eqref{eq:cond3op}, this then implies that $Z_{u^*}(t) = 0$. Since $X\in\mathcal{C}_T(L_1)$, it must be that $X_{u^*}(t)-X_{u^*}(t-)=0$. Putting these together in \eqref{eq:cond1op} results in
\begin{align*}
Y_{u^*}(t) - Y_{u^*}(t-)- ((\boldsymbol{F}Y)_{u^*}(t)- (\boldsymbol{F}Y)_{u^*}(t-)) = -Z_{u^*}(t-) \le 0.
\end{align*}
Define function $\Delta \in L_1([0,1])$ by $\Delta(u) = Y_u(t)-Y_u(t-)$ and note that $\Delta(u)>0$ whenever $Y_u(t)$ has a discontinuity at $t$, and $\Delta(u)=0$ otherwise. It then follows that
$$({\bf1}-\boldsymbol{F})\Delta \le 0.$$
Applying Lemma \ref{lem: invop}, it is clear that since $({\bf1}-\boldsymbol{F})^{-1}$ exists and is positive, 
$$\Delta \le ({\bf1}-\boldsymbol{F})^{-1}0 = 0,$$
thus contradicting that $Y_{u^*}(t)-Y_{u^*}(t-) > 0$. Therefore it must be that $Y\in \mathcal{C}_T^\uparrow(L_1)$, and it is also then immediate through \eqref{eq:cond1op} that $Z\in \mathcal{C}_T(L_1)$.
\end{proof}

\subsection{Continuity and Lipschitz Properties of the Infinite-Dimensional Skorokhod Map} \label{sec:lipschitz}
We have thus far shown existence and uniqueness of the infinite-dimensional Skorokhod map. 
We now show that the reflection and regulator maps satisfy Lipschitz properties in both $X$ and $\boldsymbol{F}$. 
This property is key in proving convergence in Section \ref{sec:convproof}.

We first present Lemma \ref{lem:pikcontraction} showing convergence of the mapping $\pi_{X,F}$ to the regulator $\Psi_F(X)$. 
The following elementary result will be needed, and the proof is omitted.

\begin{lemma}
\label{lem:sup-difference}
    For any two measurable functions $f,g \colon [0,T]\to\RR$ and $t \in [0,T]$, 
    $$\left|\sup_{0 \le s \le t} f(s) - \sup_{0 \le s \le t} g(s)\right| \le \sup_{0 \le s \le t} |f(s)-g(s)|.$$
\end{lemma}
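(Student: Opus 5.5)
The plan is to prove the two one-sided inequalities $\sup_{s\le t} f(s) - \sup_{s\le t} g(s) \le \sup_{s\le t}|f(s)-g(s)|$ and the same with $f$ and $g$ interchanged; together they give the absolute-value bound. Throughout, write $M \coloneq \sup_{0\le s\le t}|f(s)-g(s)|$. If $M=\infty$ there is nothing to prove, so we may assume $M<\infty$.

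For the first inequality, fix any $s\in[0,t]$. Then
\[
f(s) = g(s) + \big(f(s)-g(s)\big) \le \sup_{0\le r\le t} g(r) + M .
\]
Taking the supremum over $s\in[0,t]$ on the left gives $\sup_{s\le t} f(s) \le \sup_{s\le t} g(s) + M$. Exchanging the roles of $f$ and $g$ gives the reverse bound, and the two combine to the claim.

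Two small points need care.

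First, we must ensure the left-hand side is well defined, i.e.\ not of the form $\infty-\infty$. The one-sided bounds show that if $M<\infty$, then $\sup f$ is finite exactly when $\sup g$ is. If both are $+\infty$, the difference should be read under the convention that the statement concerns finite suprema. In every application in the paper the functions are c\`adl\`ag, hence bounded on $[0,T]$, so this never arises.

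Second, the argument above uses no measurability at all; measurability of $f$ and $g$ plays no role. The step the argument rests on is simply that the pointwise bound $f(s)\le \sup g + M$ is uniform in $s$, so it survives passing to the supremum.

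If desired, we will also note the same bound with $\sup_{0\le s\le t}[\,\cdot\,]^+$ in place of $\sup_{0\le s\le t}$. This follows because $x\mapsto[x]^+$ is $1$-Lipschitz, so $|[f(s)]^+-[g(s)]^+|\le|f(s)-g(s)|$, and the lemma can then be applied to $[f]^+$ and $[g]^+$. This is the form needed later for $\pi_{X,F}$.
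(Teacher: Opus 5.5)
Your argument is correct. The paper omits the proof as elementary, and yours is the standard one it presumably has in mind: the pointwise bound $f(s)\le \sup_{0\le r\le t} g(r)+M$ holds uniformly in $s$, so it passes to the supremum, and symmetry in $f$ and $g$ gives the absolute value. Your closing remark, that $[\cdot]^+$ is $1$-Lipschitz, is also exactly how the lemma is combined with the positive part in the proof of Lemma~\ref{lem:pikcontraction}.
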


\begin{lemma}
\label{lem:pikcontraction}
For $\boldsymbol{F}\in\mathcal{R}$ and any $W\in\mathcal{D}_T(L_1)$,$$\|\pi^n_{X,F}(W) - \Psi_{F}(X)\|_{T,1} \to 0 \quad\mbox{as $n\to\infty$}.$$
\end{lemma}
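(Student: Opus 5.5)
The plan is to show that $\pi_{X,F}$ becomes a contraction in $\|\cdot\|_{T,1}$ after $k$ iterations, where $k$ is chosen via Lemma \ref{lem: invop} so that $\|\boldsymbol{F}^{(k)}\|_{op}\le\gamma<1$. Since $\Psi_F(X)$ is a fixed point of $\pi_{X,F}$ (Theorem \ref{thm:unqfixpoint}), it then suffices to compare $\pi^n_{X,F}(W)$ with $\pi^n_{X,F}(\Psi_F(X))=\Psi_F(X)$. Throughout, $W$ is taken so that $\boldsymbol{F}W$ makes sense in $\mathcal{D}_T(L_1)$. If $W\notin\mathcal{D}_T^\uparrow(L_1)$, note that $\pi_{X,F}(W)$ is still nondecreasing, nonnegative and c\`adl\`ag in $t$ for each $u$, by the argument of Lemma \ref{lem:maptospace}. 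After one step we are therefore in $\mathcal{D}_T^\uparrow(L_1)$, and this does not affect the limit.

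\textbf{Step 1 (pointwise one-step estimate).} For $W_1,W_2$ and each $u,t$, use $|[a]^+-[b]^+|\le|a-b|$ together with Lemma \ref{lem:sup-difference}:
$$|\pi_{X,F}(W_1)_u(t)-\pi_{X,F}(W_2)_u(t)|\le \sup_{0\le s\le t}|(\boldsymbol{F}(W_1-W_2))_u(s)|\le \Big(\boldsymbol{F}\sup_{0\le s\le t}|W_1(s)-W_2(s)|\Big)_u,$$
where the last step uses $F\ge0$. Write $D^{(n)}_u(t):=\sup_{s\le t}|\pi^n(W_1)_u(s)-\pi^n(W_2)_u(s)|$. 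The right-hand side above is nondecreasing in $t$, so the estimate gives $D^{(n+1)}\le \boldsymbol{F}D^{(n)}$ pointwise. Because $\boldsymbol{F}$ is a positive operator, induction yields $D^{(n+k)}\le\boldsymbol{F}^{(k)}D^{(n)}$. The key point is to keep the inequality pointwise in $(u,t)$ before taking norms, so that positivity lets the operators compose.

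\textbf{Step 2 (contraction and conclusion).} Evaluate at $t=T$ and integrate over $u$. Using $\|\boldsymbol{F}^{(k)}f\|_1\le\|\boldsymbol{F}^{(k)}\|_{op}\|f\|_1$ (Lemma \ref{lem:opnorm}), we get $\|D^{(n+k)}(T)\|_1\le\gamma\|D^{(n)}(T)\|_1$. Note that $\|D^{(n)}(T)\|_1$ is exactly $\|\pi^n(W_1)-\pi^n(W_2)\|_{T,1}$. For the intermediate steps, $\|\boldsymbol{F}\|_{op}\le1$ gives monotonicity, $\|D^{(n+1)}(T)\|_1\le\|D^{(n)}(T)\|_1$. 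Now set $W_1=W$ and $W_2=\Psi_F(X)$. Writing $n=mk+r$ with $0\le r<k$,
$$\|\pi^n_{X,F}(W)-\Psi_F(X)\|_{T,1}\le \gamma^{m}\,\|\pi^{r}_{X,F}(W)-\Psi_F(X)\|_{T,1}\le\gamma^m\big(\|W\|_{T,1}+\|\Psi_F(X)\|_{T,1}\big)\to0$$
as $n\to\infty$. The initial difference is finite because $W$ and $\Psi_F(X)$ both have finite $\|\cdot\|_{T,1}$ norm, the latter since $\Psi_F(X)\le\widetilde W$ from Theorem \ref{thm:exrefmap}.

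\textbf{Main obstacle.} The delicate step is the pointwise bound in Step 1. One has to pull the $\sup_s$ inside the integral defining $\boldsymbol{F}$; this uses $F\ge0$ and $\sup_s\int\le\int\sup_s$. One also has to check that $D^{(n)}$ is measurable in $u$ and lies in $L_1$, so that the operator norm bound applies. Measurability follows from right-continuity, since the supremum can be taken over rational times. A naive norm-level estimate would only give the factor $\|\boldsymbol{F}\|_{op}\le1$ and no contraction, which is why the argument has to stay pointwise until the $k$-fold composition has been formed.
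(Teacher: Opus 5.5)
Your proposal is correct and follows essentially the same route as the paper. Both arguments prove the pointwise bound $\sup_t|\pi_{X,F}(W^1)(t)-\pi_{X,F}(W^2)(t)|\le \boldsymbol{F}\sup_t|W^1(t)-W^2(t)|$, iterate it pointwise using $F\ge0$, take the $\|\cdot\|_{T,1}$ norm, and compare with the fixed point $\Psi_F(X)$. The only cosmetic difference is that you extract the decay in $k$-step blocks with factor $\gamma^m$, whereas the paper invokes $\|\boldsymbol{F}^{(n)}\|_{op}\to0$ from Lemma \ref{lem: invop} directly.
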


\begin{proof}
Take $W^1,W^2\in\mathcal{D}_T(L_1)$. From the definition of $\pi_{X,F}$, we have
\begin{align*}
&\sup_{0\leq t \leq T}|\pi_{X,F}(W^1)(t) - \pi_{X,F}(W^2)(t)| \\
=& \sup_{0\leq t \leq T}\bigg|\sup_{0\leq s \leq t}[-X(s) + \boldsymbol{F}W^1(s)]^+-\sup_{0\leq s \leq t}[-X(s) + \boldsymbol{F}W^2(s)]^+\bigg| \\
\le & \sup_{0\leq t \leq T}|(-X(t) + \boldsymbol{F}W^1(t))-(-X(t) + \boldsymbol{F}W^2(t))| \\
\le & \boldsymbol{F}\sup_{0\leq t \leq T}|W^1(t)-W^2(t)|,
\end{align*}
where we have used Lemma \ref{lem:sup-difference} in the first inequality.
Then since $\pi^k_{X,F}(W^1), \pi^k_{X,F}(W^2)\in\mathcal{D}_T(L_1)$ for any $k$,
\begin{equation}
\label{eq:pitopsi}
\begin{aligned}
\sup_{0\leq t \leq T}|\pi^n_{X,F}(W^1)(t) - \pi^n_{X,F}(W^2)(t)|  &\le \boldsymbol{F}\sup_{0\leq t \leq T}|\pi^{n-1}_{X,F}(W^1)(t) - \pi^{n-1}_{X,F}(W^2)(t)| \\
&\le \dots \le \boldsymbol{F}^{(n)}\sup_{0\leq t \leq T}|W^1(t)-W^2(t)|.
\end{aligned}
\end{equation}
Taking the $\|\cdot\|_{1}$ norm over \eqref{eq:pitopsi} then yields
$$\|\pi^n_{X,F}(W^1) - \pi^n_{X,F}(W^2)\|_{T,1} \le \|\boldsymbol{F}^{(n)}\|_{op}\|W^1 - W^2\|_{T,1}\,.$$
From Lemma \ref{lem: invop}, $\|\boldsymbol{F}^{(n)}\|_{op}\to0$ as $n\to\infty$ and so $\|\pi^n_{X,F}(W^1) - \pi^n_{X,F}(W^2)\|_{T,1}\to0$ as $n\to\infty$.

Hence, since $\Psi_{F}(X)$ is a fixed point of $\pi_{X,F}$, taking $W^2=\Psi_{F}(X)$ we have
\begin{equation*}
    \|\pi^n_{X,F}(W^1) - \Psi_{F}(X)\|_{T,1} \to 0 \quad\mbox{as } n\to\infty. \qedhere
\end{equation*}
\end{proof}

We now present two theorems stating Lipschitz continuity of the regulator and reflected process. In Theorem \ref{thm:lipschitz}, we show Lipschitz continuity in the free process $X$. Lemma \ref{lem:krefbound} shows Lipschitz continuity in the operator $\boldsymbol{F}$ and finally Theorem \ref{thm:kconvergence} combines these two, which can then be used for the proof of convergence.

\begin{remark}
For $\boldsymbol{F}\in\mathcal{R}$, given $\gamma$ and $k$ such that $\|\boldsymbol{F}^{(k)}\|_{op}\le\gamma$, we refer to $(\gamma,k)$ as \textbf{bounded parameters} of $\boldsymbol{F}$. Note that while bounded parameters of $\boldsymbol{F}$ always exist, they are not unique and in fact hold for any $\gamma\in(0,1)$ with corresponding $k$ depending on $\gamma$.
\end{remark}

\begin{theorem}
\label{thm:lipschitz}
For $\boldsymbol{F}\in\mathcal{R}$, the reflection map $R_F \coloneq (\Psi_F,\Phi_F)$ is Lipschitz.  Namely, for bounded parameters $(\gamma, k)$ we have
\begin{equation}
\label{eq:LipschitzReg}
\|\Psi_F(X^1)-\Psi_F(X^2)\|_{T,1}\leq\frac{k}{1-\gamma}\|X^1-X^2\|_{T,1}\,,
\end{equation}
\begin{equation}
\label{eq:LipschitzRef}
\|\Phi_F(X^1)-\Phi_F(X^2)\|_{T,1}\leq \left(1+\frac{2k}{1-\gamma}\right)\|X^1-X^2\|_{T,1}\,,
\end{equation}
for all $X^1,X^2 \in \mathcal{D}_T(L^1)$.
\end{theorem}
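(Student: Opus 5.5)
The plan is to compare the two regulators through the fixed-point characterization of Theorem \ref{thm:unqfixpoint}, $Y^i=\Psi_F(X^i)=\pi_{X^i,F}(Y^i)$, and then iterate the resulting pointwise inequality $k$ times so that the bounded parameters $(\gamma,k)$ of $\boldsymbol{F}$ control it. Write $Y^i=\Psi_F(X^i)$ and define, for each $u\in[0,1]$,
\[
D_u:=\sup_{0\le t\le T}|Y^1_u(t)-Y^2_u(t)|,\qquad E_u:=\sup_{0\le t\le T}|X^1_u(t)-X^2_u(t)|,
\]
so that $\|D\|_1=\|Y^1-Y^2\|_{T,1}$ and $\|E\|_1=\|X^1-X^2\|_{T,1}$.

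\textbf{Step 1: pointwise inequality.} Exactly as in the proof of Lemma \ref{lem:pikcontraction}, combine Lemma \ref{lem:sup-difference} with the fact that $x\mapsto[x]^+$ is $1$-Lipschitz. This gives, for every $u$ and $t$,
\[
|Y^1_u(t)-Y^2_u(t)|\le \sup_{s\le t}\big(|X^1_u(s)-X^2_u(s)|+(\boldsymbol{F}|Y^1-Y^2|)_u(s)\big),
\]
using $F\ge0$. Taking the supremum over $t$ and moving it inside the integral yields the functional inequality $D\le E+\boldsymbol{F}D$, which holds pointwise in $u$.

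\textbf{Step 2: iterate.} Since $\boldsymbol{F}$ is positive, the inequality can be substituted into itself $k-1$ times. This gives $D\le \sum_{n=0}^{k-1}\boldsymbol{F}^{(n)}E+\boldsymbol{F}^{(k)}D$.

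\textbf{Step 3: take norms.} Using Lemma \ref{lem:opnorm}, $\|\boldsymbol{F}^{(n)}\|_{op}\le 1$ for each $n$, and $\|\boldsymbol{F}^{(k)}\|_{op}\le\gamma$, we get
\[
\|D\|_1\le k\|E\|_1+\gamma\|D\|_1 .
\]
Because $\|D\|_1\le\|Y^1\|_{T,1}+\|Y^2\|_{T,1}<\infty$, we may rearrange to obtain \eqref{eq:LipschitzReg}. An alternative route to the same bound is to iterate $D\le E+\boldsymbol{F}D$ indefinitely and apply Lemma \ref{lem: invop}, giving $D\le(\mathbf 1-\boldsymbol{F})^{-1}E$; but this requires showing that $\boldsymbol{F}^{(n)}D\to0$ in $L_1$, which also follows from $\|\boldsymbol{F}^{(n)}\|_{op}\to0$.

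\textbf{Step 4: the reflected process.} Write
\[
\Phi_F(X^1)-\Phi_F(X^2)=(X^1-X^2)+(Y^1-Y^2)-\boldsymbol{F}(Y^1-Y^2).
\]
The triangle inequality together with $\|\boldsymbol{F}\|_{op}\le1$ then gives the bound $\|E\|_1+2\|D\|_1$, and inserting \eqref{eq:LipschitzReg} yields \eqref{eq:LipschitzRef}.

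\textbf{Main obstacle.} The delicate point is Step 1, specifically measurability and the exchange of supremum and integral. We need $u\mapsto D_u$ to be measurable, which can be handled by right-continuity, restricting the supremum to rational times. We also need the inequality
\[
\sup_{s}\int_0^1 F(u,v)|Y^1_v(s)-Y^2_v(s)|\,dv\le\int_0^1 F(u,v)D_v\,dv,
\]
which holds trivially since $F\ge 0$. The iteration in Step 2 needs only the positivity of $\boldsymbol{F}$, so no further difficulty is expected once Step 1 is established.
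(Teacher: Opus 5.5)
Your proof is correct, but it follows a different and shorter route than the paper's.

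\textbf{Your route.} You subtract the two fixed-point identities $Y^i=\pi_{X^i,F}(Y^i)$ directly to get the time-free inequality $D\le E+\boldsymbol{F}D$. You then iterate it only $k$ times and absorb the remainder $\|\boldsymbol{F}^{(k)}D\|_1\le\gamma\|D\|_1$ using the a priori bound $\|D\|_1\le\|Y^1\|_{T,1}+\|Y^2\|_{T,1}<\infty$. Your treatment of measurability of $u\mapsto D_u$ via rational times, and of the interchange $\sup_s(\boldsymbol{F}|Y^1-Y^2|)_u(s)\le(\boldsymbol{F}D)_u$, is fine.

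\textbf{The paper's route.} Writing $\eta(X)_u(t)=\sup_{s\le t}[X_u(s)]^+$, the paper proves by induction the one-sided comparison $\Psi_F(X^1)\le\pi^n_{X^2,F}(\Psi_F(X^1))+\sum_{i=0}^{n-1}\boldsymbol{F}^{(i)}\eta(X^2-X^1)$. It lets $n\to\infty$ using Lemma \ref{lem:pikcontraction} (Picard iterates of $\pi_{X^2,F}$ from any starting point converge to $\Psi_F(X^2)$). This gives $\Psi_F(X^1)-\Psi_F(X^2)\le(\mathbf{1}-\boldsymbol{F})^{-1}\eta(X^2-X^1)$ for a.e.\ $u$ and all $t$. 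It then symmetrizes and applies the Neumann-series bound $\|(\mathbf{1}-\boldsymbol{F})^{-1}\|_{op}\le k/(1-\gamma)$ from Lemma \ref{lem: invop}.

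\textbf{What each buys.} Your version needs only positivity of $\boldsymbol{F}$, $\|\boldsymbol{F}\|_{op}\le1$ and $\|\boldsymbol{F}^{(k)}\|_{op}\le\gamma$. It avoids both the limit passage and the full Neumann series, and it yields the same constants in both inequalities. Both arguments ultimately rely on the regulators having finite $\|\cdot\|_{T,1}$ norm: you use it to absorb a term, the paper to apply Lemma \ref{lem:pikcontraction}. The paper's route, which mirrors Whitt's finite-dimensional proof, retains sign-sensitive, pointwise information. For instance, it shows that $X^2\le X^1$ implies $\Psi_F(X^1)\le\Psi_F(X^2)$ a.e. Your absolute-value inequality $D\le E+\boldsymbol{F}D$ discards that information, though it is not needed for the Lipschitz statement.
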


\begin{proof}
    Given any $X^1, X^2 \in \mathcal{D}_T(L_1)$, define
    $$\pi_i(Y)(t) \coloneq\pi_{X^i,F}(Y)(t) = \sup_{0\leq s \leq t}[-X^i(s) + \boldsymbol{F}Y(s)]^+, \quad i = 1,2, \quad t \in [0,T], \quad Y \in \mathcal{D}_T(L_1).$$
    For $X \in \mathcal{D}_T(L_1)$, define
    $$[\eta(X)]_u(t) := \sup_{0 \le s \le t} [X_u(s)]^+, \quad t \in [0,T], \quad u \in [0,1].$$
    We claim that (for each time and each coordinate)
    $$\Psi_F(X^1) \le \pi_2^n(\Psi(X^1)) + \sum_{i=0}^{n-1} \boldsymbol{F}^{(i)}\eta(X^2-X^1)$$
    for all $n = 0,1,\dotsc$.
    To see this, clearly it holds for $n=0$ with $\pi_2^0$ being the identity map.
    Now suppose the claim holds for $n$.
    Then we have
    \begin{align*}
        \Psi_F(X^1) & = \pi_1(\Psi_F(X^1)) = \eta(\boldsymbol{F}\Psi_F(X^1)-X^1) \\
        & \le \eta\left(\boldsymbol{F}\pi_2^n(\Psi_F(X^1)) - X^2 + \boldsymbol{F} \sum_{i=0}^{n-1} \boldsymbol{F}^{(i)}\eta(X^2-X^1) + (X^2-X^1)\right) \\
        & \le \eta\left(\boldsymbol{F}\pi_2^n(\Psi_F(X^1)) - X^2 + \sum_{i=0}^{n} \boldsymbol{F}^{(i)}\eta(X^2-X^1)\right) \\
        & \le \eta\left(\boldsymbol{F}\pi_2^{n}(\Psi_F(X^1)) - X^2\right) + \sum_{i=0}^{n} \boldsymbol{F}^{(i)}\eta(X^2-X^1) \\
        & = \pi_2^{n+1}(\Psi_F(X^1)) + \sum_{i=0}^{n} \boldsymbol{F}^{(i)}\eta(X^2-X^1).
    \end{align*}
    Therefore the claim holds for all $n$ by induction.
    Letting $n \to \infty$ and applying Lemma \ref{lem:pikcontraction}, from the claim we get for all $t\in[0,T]$ and a.e. $u\in[0,1]$,
    \begin{equation*}
        \big(\Psi_F(X^1) - \Psi_F(X^2)\big)_u(t) \le \big(\sum_{i=0}^{\infty} \boldsymbol{F}^{(i)}\eta(X^2-X^1)\big)_u(t).
    \end{equation*}
    By symmetry,
    \begin{equation*}
        \big(\Psi_F(X^1) - \Psi_F(X^2)\big)_u(t) \ge - \big(\sum_{i=0}^{\infty} \boldsymbol{F}^{(i)}\eta(X^1-X^2)\big)_u(t).
    \end{equation*}
    Therefore, using Lemma \ref{lem: invop} and applying $\|\cdot\|_{T,1}$, we obtain
    \begin{equation*}
        \|\Psi_F(X^1) - \Psi_F(X^2)\|_{T,1} \le \|(\boldsymbol{1}-\boldsymbol{F})^{-1}\|_{op} \|X^1-X^2\|_{T,1}.
    \end{equation*}
Hence,
\begin{equation*}
    \|\Psi_F(X^1) - \Psi_F(X^2)\|_{T,1} \le \frac{k}{1-\gamma} \|X^1-X^2\|_{T,1}.
\end{equation*}

For the reflected process, using \eqref{eq:cond1op}  and noting that $\|\boldsymbol{1}-\boldsymbol{F}\|_{op} \leq 2$ yields
\begin{align*}
    \|\Phi_F(X^1) - \Phi_F(X^2)\|_{T,1} &\leq \|X^1-X^2\|_{T,1}+\|\boldsymbol{1}-\boldsymbol{F}\|_{op}\cdot\|\Psi_F(X^1) - \Psi_F(X^2)\|_{T,1} \\
    &\leq (1 + 2\cdot\|(\boldsymbol{1}-\boldsymbol{F})^{-1}\|_{op})\|X^1 - X^2\|_{T,1} \,.
\end{align*}
From this \eqref{eq:LipschitzRef} is immediate.
\end{proof}

In Theorem \ref{thm:lipschitz}, we showed that the reflection map is Lipschitz continuous in the free process $X$. We now extend this Lipschitz continuity to include the reflection operator $\boldsymbol{F}$. First, we prove some necessary results.

\begin{lemma}
\label{lem:krefbound}
Let $\boldsymbol{F}_1$ and $\boldsymbol{F}_2$ be two operators in $\mathcal{R}$ with respective bounded parameters $(\gamma_1,k_1)$ and $(\gamma_2,k_2)$. Then for any $X\in \mathcal{D}_T(L_1)$ and all $n\geq1$, $m\geq0$, $i=1,2$,
\begin{enumerate}[label=(\roman*)]

\item
\begin{equation}
\label{eq:kpibound0}
\|\boldsymbol{F}_i^{(m)}\pi^n_{X,F_i}(0)\|_{T,1} \leq \|\boldsymbol{F}_i^{(m)}\|_{op}\|X\|_{T,1} + \|\boldsymbol{F}_i^{(m+1)}\pi^{n-1}_{X,F_i}(0)\|_{T,1}\,,
\end{equation}

\item
\begin{equation}
\label{eq:kpibound1}
\|\pi^n_{X,F_i}(0)\|_{T,1} \leq \bigg(\sum_{j=0}^{n-1}\|\boldsymbol{F}_i^{(j)}\|_{op}\bigg)\|X\|_{T,1}\,,
\end{equation}

\item
\begin{equation}
\label{eq:kpibound1.5}
\begin{split}
&\boldsymbol{F}_2^{(m)}\sup_{0\leq s\leq t}\big|(\pi^n_{X,F_1}(0))(s) - (\pi^n_{X,F_2}(0))(s)\big| \\&\leq \boldsymbol{F}_2^{(m)}\sup_{0\le s\le t}\big|(\boldsymbol{F}_1-\boldsymbol{F}_2)(\pi^{n-1}_{X,F_1}(0))(s)\big|  \\
& \quad+ \boldsymbol{F}_2^{(m+1)}\sup_{0\leq s\leq t}\big|(\pi^{n-1}_{X,F_1}(0))(s) - (\pi^{n-1}_{X,F_2}(0))(s)\big|\,, 
\end{split}
\end{equation}

\item
\begin{equation}
\label{eq:kpibound2}
\|\pi^n_{X,F_1}(0) - \pi^n_{X,F_2}(0)\|_{T,1} \leq \bigg(\sum_{j=0}^{n-1}\|\boldsymbol{F}_2^{(j)}\|_{op}\bigg)\frac{k_1\|\boldsymbol{F}_1-\boldsymbol{F}_2\|_{op}\|X\|_{T,1}}{1-\gamma_1}\,,
\end{equation}

\item
\begin{equation}
\label{eq:kregbound1}
\|\Psi_{F_i}(X)\|_{T,1} \leq \frac{k_i\|X\|_{T,1}}{1-\gamma_i}\,,
\end{equation}

\item
\begin{equation}
\label{eq:kregbound2}
\|\Psi_{F_1}(X)-\Psi_{F_2}(X)\|_{T,1} \leq \frac{k_1k_2\|\boldsymbol{F}_1-\boldsymbol{F}_2\|_{op}\|X\|_{T,1}}{(1-\gamma_1)(1-\gamma_2)}\,. 
\end{equation}
\end{enumerate}
\end{lemma}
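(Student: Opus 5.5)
The plan is to prove the six items in order, each one feeding the next. Throughout, $\pi^n_{X,F_i}(0)$ is the Picard iterate started at $0$. By Theorem \ref{thm:unqfixpoint} these iterates increase to $\Psi_{F_i}(X)$, and by Lemma \ref{lem:pikcontraction} they converge to it in $\|\cdot\|_{T,1}$. The basic pointwise tool is the bound behind Lemma \ref{lem:sup-difference}. For $W\in\mathcal{D}_T^\uparrow(L_1)$ and $\eta$ as in the proof of Theorem \ref{thm:lipschitz},
\begin{equation*}
0\le \pi_{X,F}(W)(t)\le \sup_{0\le s\le t}|X(s)| + \boldsymbol{F}W(t),
\end{equation*}
using $[a+b]^+\le |a|+b$ for $b\ge0$ and the monotonicity of $\boldsymbol{F}W$ in $t$. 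Since the kernel is nonnegative, applying $\boldsymbol{F}_i^{(m)}$ preserves this inequality pointwise.

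For (i), take $W=\pi^{n-1}_{X,F_i}(0)$, apply $\boldsymbol{F}_i^{(m)}$, take the sup over $t\le T$ (the right side is increasing in $t$), integrate over $u$, and use $\|\boldsymbol{F}_i^{(m)}g\|_1\le\|\boldsymbol{F}_i^{(m)}\|_{op}\|g\|_1$ with $g_u=\sup_t|X_u(t)|$. Item (ii) follows by iterating (i) from $m=0$ down to the term $\boldsymbol{F}_i^{(n)}\pi^0(0)=0$. For (iii), apply Lemma \ref{lem:sup-difference} to the two suprema defining $\pi^n$; the $X$ terms cancel. Then write $\boldsymbol{F}_1A-\boldsymbol{F}_2B=(\boldsymbol{F}_1-\boldsymbol{F}_2)A+\boldsymbol{F}_2(A-B)$ and use $\sup_s|\boldsymbol{F}_2(A-B)(s)|\le\boldsymbol{F}_2\sup_s|A-B|$, which holds because the kernel is nonnegative. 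Applying $\boldsymbol{F}_2^{(m)}$ then gives (iii).

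For (iv), iterate (iii) starting from $m=0$, integrate, and bound each term by
\begin{equation*}
\|\boldsymbol{F}_2^{(j)}\|_{op}\,\|\boldsymbol{F}_1-\boldsymbol{F}_2\|_{op}\,\|\pi^{n-1-j}_{X,F_1}(0)\|_{T,1}.
\end{equation*}
Here one must be careful that $\sup_s|(\boldsymbol{F}_1-\boldsymbol{F}_2)A(s)|\le|\boldsymbol{F}_1-\boldsymbol{F}_2|\sup_s|A(s)|$, where $|\cdot|$ denotes the operator with the absolute-value kernel, and whose $L_1$ norm is controlled by the op norm as defined. Then bound $\|\pi^{l}_{X,F_1}(0)\|_{T,1}$ via (ii) by $\sum_j\|\boldsymbol{F}_1^{(j)}\|_{op}\|X\|_{T,1}\le \frac{k_1}{1-\gamma_1}\|X\|_{T,1}$. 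The last inequality is the summation argument in the proof of Lemma \ref{lem: invop}: split the sum into blocks of length $k_1$, using $\|\boldsymbol{F}^{(j)}\|_{op}\le1$ and $\|\boldsymbol{F}^{(k)}\|_{op}\le\gamma$.

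For (v), let $n\to\infty$ in (ii) using Lemma \ref{lem:pikcontraction} and the same summation bound. For (vi), let $n\to\infty$ in (iv), using Lemma \ref{lem:pikcontraction} for both $i=1,2$ and $\sum_j\|\boldsymbol{F}_2^{(j)}\|_{op}\le k_2/(1-\gamma_2)$.

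The main obstacle I expect is the bookkeeping in (iii)–(iv). One issue is justifying that sup over $s$ commutes suitably with the integral operators, which needs nonnegativity and monotonicity of the iterates in $t$. The other is keeping the index ranges in (iv) such that the factor $\sum_{j=0}^{n-1}\|\boldsymbol{F}_2^{(j)}\|_{op}$ appears exactly, with the uniform-in-$l$ bound from (ii) for $F_1$ applied inside.
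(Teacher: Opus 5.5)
Your proposal is correct and follows the paper's proof essentially step for step. It uses the pointwise bound $[-X+\boldsymbol{F}W]^+\le|X|+\boldsymbol{F}W$ with monotonicity in $t$ for (i)--(ii), Lemma \ref{lem:sup-difference} with the splitting $\boldsymbol{F}_1A-\boldsymbol{F}_2B=(\boldsymbol{F}_1-\boldsymbol{F}_2)A+\boldsymbol{F}_2(A-B)$ for (iii), iteration together with the uniform bound $\sum_j\|\boldsymbol{F}_1^{(j)}\|_{op}\le k_1/(1-\gamma_1)$ for (iv), and Lemma \ref{lem:pikcontraction} to pass to the limit in (v)--(vi); the only differences (iterating (i) down to $\pi^0_{X,F_i}(0)=0$, and summing in blocks of length $k$ instead of using the estimate in Lemma \ref{lem: invop}) are cosmetic.
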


\begin{proof}
\begin{enumerate}[label=(\roman*)]

\item
Using the fact that $\sup_{0\le t\le T}(\boldsymbol{F}W(t))\leq \boldsymbol{F}(\sup_{0\le t\le T} W(t))$ and the observation that $\pi_{X,F_1}$ is increasing in $t$, 
\begin{align*}
\|\boldsymbol{F}_i^{(m)}\pi^n_{X,F_i}(0)\|_{T,1} &= \bigg\|\sup_{0\leq t\leq T} \bigg|\boldsymbol{F}_i^{(m)}\sup_{0\leq s\leq t}\big[-X(s)+(\boldsymbol{F}_i\pi^{n-1}_{X,F_i}(0))(s)\big]^+\bigg| \bigg\|_1 \\
&\leq \bigg\|\sup_{0\leq t\leq T}\bigg|\boldsymbol{F}_i^{(m)}\sup_{0\leq s\leq t}|X(s)|\bigg|+\sup_{0\leq t\leq T}\bigg|(\boldsymbol{F}_i^{(m+1)}\sup_{0\leq s\leq t}\pi^{n-1}_{X,F_i}(0)(s))\bigg| \bigg\|_1 \\
&\leq \|\boldsymbol{F}_i^{(m)}\|_{op} \bigg\|\sup_{0\leq t\leq T}|X(t)|\bigg\|_1 + \bigg \|\sup_{0\leq t\leq T}(\boldsymbol{F}_i^{(m+1)}\pi^{n-1}_{X,F_i}(0))(t)\bigg\|_1 \\
&= \|\boldsymbol{F}_i^{(m)}\|_{op}\|X\|_{T,1} + \|\boldsymbol{F}_i^{(m+1)}\pi^{n-1}_{X,F_i}(0)\|_{T,1}\,.
\end{align*}

\item
Iteratively using \eqref{eq:kpibound0}, we get
\begin{align*}
\|\pi^n_{X,F_i}(0)\|_{T,1} &\leq \|X\|_{T,1} + \|\boldsymbol{F}_i\pi^{n-1}_{X,F_i}(0)\|_{T,1} \\
&\leq \|X\|_{T,1} + \|\boldsymbol{F}_i\|_{op}\|X\|_{T,1} + \|\boldsymbol{F}_i^{(2)}\pi^{n-2}_{X,F_i}(0)\|_{T,1} \\
&\leq \|X\|_{T,1} + \|\boldsymbol{F}_i\|_{op}\|X\|_{T,1} + \|\boldsymbol{F}_i^{(2)}\|_{op}\|X\|_{T,1} + \|\boldsymbol{F}_i^{(3)}\pi^{n-3}_{X,F_i}(0)\|_{T,1} \\
&\leq\cdots\leq \bigg(\sum_{j=0}^{n-2}\|\boldsymbol{F}_i^{(j)}\|_{op}\bigg)\|X\|_{T,1} + \|\boldsymbol{F}_i^{(n-1)}\pi_{X,F_i}(0)\|_{T,1}\,. 
\end{align*}
Since $\|\pi_{X,F_i}(0)\|_{T,1} = \|\sup_{0\leq t\leq T}[-X(t)]^+\|_1 \leq \|\sup_{0\leq t\leq T}|X(t)|\|_1 = \|X\|_{T,1}$, the result follows.

\item Using Lemma \ref{lem:sup-difference} in the third line, 

\begin{align*}
&\boldsymbol{F}_2^{(m)}\sup_{0\leq s\leq t}\big|(\pi^n_{X,F_1}(0))(s) - (\pi^n_{X,F_2}(0))(s) \big| \\
&= \boldsymbol{F}_2^{(m)}\sup_{0\leq s\leq t}\bigg|\sup_{0\leq r\leq s}\big[-X(r)+(\boldsymbol{F}_1\pi^{n-1}_{X,F_1}(0))(r)\big]^+ - \sup_{0\leq r\leq s}\big[-X(r)+(\boldsymbol{F}_2\pi^{n-1}_{X,F_2}(0))(r)\big]^+\bigg| \\
&\leq \boldsymbol{F}_2^{(m)}\sup_{0\leq s\leq t}\bigg|\big[-X(s)+(\boldsymbol{F}_1\pi^{n-1}_{X,F_1}(0))(s)\big]^+ - \big[-X(s)+(\boldsymbol{F}_2\pi^{n-1}_{X,F_2}(0))(s)\big]^+\bigg| \\
&\leq \boldsymbol{F}_2^{(m)}\sup_{0\leq s\leq t} \big|(\boldsymbol{F}_1\pi^{n-1}_{X,F_1}(0))(s) - (\boldsymbol{F}_2\pi^{n-1}_{X,F_2}(0))(s)\big| \\
&\leq \boldsymbol{F}_2^{(m)}\sup_{0\leq s\leq t}\big|(\boldsymbol{F}_1\pi^{n-1}_{X,F_1}(0))(s) - (\boldsymbol{F}_2\pi^{n-1}_{X,F_1}(0))(s)\big| \\
&\qquad+ \boldsymbol{F}_2^{(m)}\sup_{0\leq s\leq t}\big|(\boldsymbol{F}_2\pi^{n-1}_{X,F_1}(0))(s) - (\boldsymbol{F}_2\pi^{n-1}_{X,F_2}(0))(s)\big|\\
&\leq \boldsymbol{F}_2^{(m)}\sup_{0\le s\le t}\big|(\boldsymbol{F}_1-\boldsymbol{F}_2)(\pi^{n-1}_{X,F_1}(0))(s)\big| + \boldsymbol{F}_2^{(m+1)}\sup_{0\leq s\leq t}\big|(\pi^{n-1}_{X,F_1}(0))(s) - (\pi^{n-1}_{X,F_2}(0))(s))\big|. 
\end{align*}

\item
Using \eqref{eq:kpibound1.5},
\begin{align*}
&\sup_{0\leq s\leq t}\big|(\pi^n_{X,F_1}(0))(s) - (\pi^n_{X,F_2}(0))(s)\big| \\
&\leq \sup_{0\le s\le t} \big|(\boldsymbol{F}_1-\boldsymbol{F}_2)(\pi^{n-1}_{X,F_1}(0))(s)\big| + \boldsymbol{F}_2\sup_{0\leq s\leq t}\big|(\pi^{n-1}_{X,F_1}(0))(s) - \pi^{n-1}_{X,F_2}(0))(s)\big| \\
&\leq \sup_{0\le s\le t} \big|(\boldsymbol{F}_1-\boldsymbol{F}_2)(\pi^{n-1}_{X,F_1}(0))(s)\big| + \boldsymbol{F}_2 \sup_{0\le s\le t} \big|(\boldsymbol{F}_1-\boldsymbol{F}_2)(\pi^{n-2}_{X,F_1}(0))(s)\big| \\
&\qquad+ \boldsymbol{F}_2^{(2)}\sup_{0\leq s\leq t}\big|(\pi^{n-2}_{X,F_1}(0))(s) - \pi^{n-2}_{X,F_2}(0))(s)\big| \\
&\leq \cdots \leq \sum_{j=0}^{n-1}F_2^{(j)} \sup_{0\le s\le t} |(\boldsymbol{F}_1-\boldsymbol{F}_2)(\pi^{n-j-1}_{X,F_1}(0))(s)| + \boldsymbol{F}_2^{(n)}\cdot 0\,.
\end{align*}
Taking the norm $\|\cdot\|_{T,1}$ over the inequality and using \eqref{eq:kpibound1} and Lemma \ref{lem: invop} yields

\begin{align*}
\|\pi^n_{X,F_1}(0) - \pi^n_{X,F_2}(0)\|_{T,1} &= \bigg\|\sup_{0\leq t\leq T}\sup_{0\leq s\leq t}\big|(\pi^n_{X,F_1}(0))(s) - (\pi^n_{X,F_2}(0))(s)\big|\bigg\|_1 \\
&\leq \bigg\|\sup_{0\leq t\leq T}\sum_{j=0}^{n-1}\boldsymbol{F}_2^{(j)} \sup_{0\le s\le t} |(\boldsymbol{F}_1-\boldsymbol{F}_2)(\pi^{n-j-1}_{X,F_1}(0))(s)|\bigg\|_1 \\
&\leq \sum_{j=0}^{n-1}\|\boldsymbol{F}_2^{(j)}\|_{op}\|\boldsymbol{F}_1-\boldsymbol{F}_2\|_{op}\left(\sum_{l=0}^{n-j}\|\boldsymbol{F}_1^{(l)}\|_{op}\right)\|X\|_{T,1} \\
&\leq \sum_{j=0}^{n-1}\|\boldsymbol{F}_2^{(j)}\|_{op}\|\boldsymbol{F}_1-\boldsymbol{F}_2\|_{op}\left(\sum_{l=0}^\infty\|\boldsymbol{F}_1^{(l)}\|_{op}\right)\|X\|_{T,1} \\
&\leq \left(\sum_{j=0}^{n-1}\|\boldsymbol{F}_2^{(j)}\|_{op}\right)\frac{k_1\|\boldsymbol{F}_1-\boldsymbol{F}_2\|_{op}\|X\|_{T,1}}{1-\gamma_1}
\end{align*}

\item
From \eqref{eq:kpibound1},
\begin{align*}
\|\Psi_{F_i}(X)\|_{T,1} &\leq \|\Psi_{F_i}(X) - \pi^n_{X,F_i}(0)\|_{T,1} + \|\pi^n_{X,F_i}(0)\|_{T,1} \\
&\leq \|\Psi_{F_i}(X) - \pi^n_{X,F_i}(0)\|_{T,1} + \left(\sum_{j=0}^{n-1}\|\boldsymbol{F}_i^{(j)}\|_{op}\right)\|X\|_{T,1}
\end{align*}
Finally, applying Lemma \ref{lem:pikcontraction}  with $\Psi_{F_i}(X) = \pi_{X,F_i}(\Psi_{F_i}(X))$,
letting $n\to\infty$ yields the result.

\item
From \eqref{eq:kpibound2},
\begin{align*}
\|\Psi_{F_1}(X) - \Psi_{F_2}(X)\|_{T,1} &\leq \|\Psi_{F_1}(X) - \pi^n_{X,F_1}(0)\|_{T,1} + \|\pi^n_{X,F_1}(0)-\pi^n_{X,F_2}(0)\|_{T,1} \\&+ \|\pi^n_{X,F_2}(0) - \Psi_{F_2}(X)\|_{T,1} \\
&\leq \|\Psi_{F_1}(X) - \pi^n_{X,F_1}(0)\|_{T,1} + \left(\sum_{j=0}^{n-1}\|\boldsymbol{F}_2^{(j)}\|_{op}\right)\frac{k_1\|\boldsymbol{F}_1-\boldsymbol{F}_2\|_{op}\|X\|_{T,1}}{1-\gamma_1} \\&+ \|\pi^n_{X,F_2}(0) - \Psi_{F_2}\|_{T,1}\,. 
\end{align*}
Again letting $n\to\infty$ yields the result. \qedhere
\end{enumerate}
\end{proof}

\begin{theorem}
\label{thm:kconvergence}
Take $X^1\in\mathcal{D}_T(L_1)$ and $X^2\in\mathcal{D}_T(L_1)$. Let $(\Phi_{F_1}(X^1),\Psi_{F_1}(X^1))$ and $(\Phi_{F_2}(X^2),\Psi_{F_2}(X^2))$ be the corresponding reflected processes and regulators associated with operators $\boldsymbol{F}_1\in\mathcal{R}$ and $\boldsymbol{F}_2\in\mathcal{R}$ with respective bounded parameters $(\gamma_1,k_1)$ and $(\gamma_2,k_2)$.
Then the following holds:
\begin{enumerate}[label=(\roman*)]
\item
\begin{equation}
\|\Psi_{F_2}(X^2)-\Psi_{F_1}(X^1)\|_{T,1} \leq \frac{k_2}{1-\gamma_2}\|X^2-X^1\|_{T,1} + \frac{k_1k_2\|X^1\|_{T,1}}{(1-\gamma_1)(1-\gamma_2)}\|\boldsymbol{F}_2-\boldsymbol{F}_1\|_{op}\,,
\end{equation}
\item
\begin{equation}
\begin{split}
    \|\Phi_{F_2}(X^2)-\Phi_{F_1}(X^1)\|_{T,1}  &\leq \left(1+\frac{2k_2}{1-\gamma_2}\right)\|X^2-X^1\|_{T,1} \\ & \quad + \left(\frac{2k_1k_2}{(1-\gamma_1)(1-\gamma_2)}+\frac{k_1}{1-\gamma_1}\right)\|X^1\|_{T,1}\|\boldsymbol{F}_2-\boldsymbol{F}_1\|_{op}\,.
\end{split}
\end{equation}
\end{enumerate}
\end{theorem}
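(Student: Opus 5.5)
The plan is a triangle inequality through the intermediate pair $(\Phi_{F_2}(X^1),\Psi_{F_2}(X^1))$: first change the free process with the operator $\boldsymbol{F}_2$ fixed, then change the operator with the free process $X^1$ fixed. For part (i) I would write
\begin{equation*}
\|\Psi_{F_2}(X^2)-\Psi_{F_1}(X^1)\|_{T,1} \le \|\Psi_{F_2}(X^2)-\Psi_{F_2}(X^1)\|_{T,1} + \|\Psi_{F_2}(X^1)-\Psi_{F_1}(X^1)\|_{T,1}.
\end{equation*}
The first term is bounded by $\frac{k_2}{1-\gamma_2}\|X^2-X^1\|_{T,1}$, applying \eqref{eq:LipschitzReg} of Theorem \ref{thm:lipschitz} to $\boldsymbol{F}_2$ with bounded parameters $(\gamma_2,k_2)$. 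The second term is bounded directly by \eqref{eq:kregbound2} of Lemma \ref{lem:krefbound} with $X=X^1$. That lemma is symmetric in the roles of $\boldsymbol{F}_1,\boldsymbol{F}_2$ and the constant $k_1k_2/((1-\gamma_1)(1-\gamma_2))$ is symmetric, so the ordering costs nothing. Adding the two bounds gives (i).

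For part (ii) I would use \eqref{eq:cond1op} to write
\begin{equation*}
\Phi_{F_2}(X^2)-\Phi_{F_1}(X^1) = (X^2-X^1) + (\mathbf{1}-\boldsymbol{F}_2)\big(\Psi_{F_2}(X^2)-\Psi_{F_1}(X^1)\big) - (\boldsymbol{F}_2-\boldsymbol{F}_1)\Psi_{F_1}(X^1).
\end{equation*}
This is the identity $(\mathbf{1}-\boldsymbol{F}_2)\Psi_{F_2}-(\mathbf{1}-\boldsymbol{F}_1)\Psi_{F_1} = (\mathbf{1}-\boldsymbol{F}_2)(\Psi_{F_2}-\Psi_{F_1}) - (\boldsymbol{F}_2-\boldsymbol{F}_1)\Psi_{F_1}$. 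I then take $\|\cdot\|_{T,1}$ and use the operator-norm bound $\|\boldsymbol{F}f\|_{T,1}\le\|\boldsymbol{F}\|_{op}\|f\|_{T,1}$ from Lemma \ref{lem:opnorm}.

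The three terms are handled as follows:
\begin{itemize}
\item The first term contributes $\|X^2-X^1\|_{T,1}$.
\item The second term uses $\|\mathbf{1}-\boldsymbol{F}_2\|_{op}\le 2$ (since $\|\boldsymbol{F}_2\|_{op}\le 1$) together with part (i). It contributes $\frac{2k_2}{1-\gamma_2}\|X^2-X^1\|_{T,1}+\frac{2k_1k_2}{(1-\gamma_1)(1-\gamma_2)}\|X^1\|_{T,1}\|\boldsymbol{F}_2-\boldsymbol{F}_1\|_{op}$.
\item The third term uses \eqref{eq:kregbound1} for $\Psi_{F_1}(X^1)$. It contributes $\frac{k_1}{1-\gamma_1}\|X^1\|_{T,1}\|\boldsymbol{F}_2-\boldsymbol{F}_1\|_{op}$.
\end{itemize}
Collecting these gives exactly the constants in (ii).

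The only step needing care is the splitting in (ii). The remainder term must be $(\boldsymbol{F}_2-\boldsymbol{F}_1)\Psi_{F_1}(X^1)$, paired with $\mathbf{1}-\boldsymbol{F}_2$ acting on the difference, and not the other way around. This choice is what makes the coefficient of $\|X^2-X^1\|_{T,1}$ depend only on $(\gamma_2,k_2)$, and the extra term depend on $k_1$ and $\|X^1\|_{T,1}$, as stated. The operator-norm estimate for $\boldsymbol{F}_2-\boldsymbol{F}_1$, which is not a nonnegative operator, still holds by Lemma \ref{lem:opnorm}, since the norm uses $|G(u,v)|$. Otherwise the proof is a direct combination of Theorem \ref{thm:lipschitz} and Lemma \ref{lem:krefbound}.
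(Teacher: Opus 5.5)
Your proposal is correct and follows the paper's proof essentially line by line. For (i) you use the triangle inequality through $\Psi_{F_2}(X^1)$ with Theorem \ref{thm:lipschitz} and \eqref{eq:kregbound2}; for (ii) you decompose $\Phi_{F_2}(X^2)-\Phi_{F_1}(X^1)$ and bound the pieces using $\|\mathbf{1}-\boldsymbol{F}_2\|_{op}\le 2$, part (i), and \eqref{eq:kregbound1}. Your sign on the remainder term, $-(\boldsymbol{F}_2-\boldsymbol{F}_1)\Psi_{F_1}(X^1)$, is in fact the correct one (the paper writes $+$), though this is immaterial once norms are taken.
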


\begin{proof}
Using \eqref{eq:kregbound2} from Lemma \ref{lem:krefbound} and Theorem \ref{thm:lipschitz}, we obtain
\begin{align*}
\|\Psi_{F_2}(X^2)-\Psi_{F_1}(X^1)\|_{T,1} &\leq \|\Psi_{F_2}(X^2)-\Psi_{F_2}(X^1)\|_{T,1} + \|\Psi_{F_2}(X^1)-\Psi_{F_1}(X^1)\|_{T,1} \\
&\leq \frac{k_2}{1-\gamma_2}\|X^2-X^1\|_{T,1} + \frac{k_1k_2\|X^1\|_{T,1}}{(1-\gamma_1)(1-\gamma_2)}\|\boldsymbol{F}_2-\boldsymbol{F}_1\|_{op}\,.
\end{align*}
Then, using the relations $\Phi_{F_1}(X^1) = X^1 + (\mathbf{1}-\boldsymbol{F}_1)\Psi_{F_1}(X^1)$ and $\Phi_{F_2}(X^2) = X^2 + (\mathbf{1}-\boldsymbol{F}_2)\Psi_{F_2}(X^2)$, we have
$$\Phi_{F_2}(X^2)-\Phi_{F_1}(X^1) = (X^2-X^1)+(\mathbf{1}-\boldsymbol{F}_2)(\Psi_{F_2}(X^2)-\Psi_{F_1}(X^1))+(\boldsymbol{F}_2-\boldsymbol{F}_1)\Psi_{F_1}(X^1)\,.$$
Taking the norm and using \eqref{eq:kregbound1} gives
\begin{align*}
&\|\Phi_{F_2}(X^2)-\Phi_{F_1}(X^1)\|_{T,1} 
\\&\leq \|X^2-X^1\|_{T,1}+\|\mathbf{1}-\boldsymbol{F}_2\|_{op}\|\Psi_{F_2}(X^2)-\Psi_{F_1}(X^1)\|_{T,1}+\|\boldsymbol{F}_2-\boldsymbol{F}_1\|_{op}\|\Psi_{F_1}(X^1)\|_{T,1} \\
&\leq \|X^2-X^1\|_{T,1} + 2\left(\frac{k_2}{1-\gamma_2}\|X^2-X^1\|_{T,1} + \frac{k_1k_2\|X^1\|_{T,1}}{(1-\gamma_1)(1-\gamma_2)}\|\boldsymbol{F}_2-\boldsymbol{F}_1\|_{op}\right) \\&\quad+\|\boldsymbol{F}_2-\boldsymbol{F}_1\|_{op}\frac{k_1\|X^1\|_{T,1}}{1-\gamma_1}\\
&= \left(1+\frac{2k_2}{1-\gamma_2}\right)\|X^2-X^1\|_{T,1} + \left(\frac{2k_1k_2}{(1-\gamma_1)(1-\gamma_2)}+\frac{k_1}{1-\gamma_1}\right)\|X^1\|_{T,1}\|\boldsymbol{F}_2-\boldsymbol{F}_1\|_{op}\,. 
\end{align*}
\qedhere
\end{proof}

\begin{lemma}
\label{lem:opseqnorm}
Take $\boldsymbol{F} \in \mathcal{R}$ and let $\{\boldsymbol{F}_n\}_{n\ge1}$ be a sequence of operators in $\mathcal{R}$  such that for some constants $C>0$ and $\alpha>0$, $\|\boldsymbol{F}_n - \boldsymbol{F}\|_{op}\to0$ as $n\to\infty$. Then for any $\gamma\in(0,1)$ independent of $n$, there exists $k\ge 1$ such that $\|\boldsymbol{F}^{(k)}\|_{op}\le\gamma$ and for sufficiently large $n$, $\|\boldsymbol{F}_n^{(k)}\|_{op}\le\gamma$.
\end{lemma}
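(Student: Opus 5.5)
The plan is a perturbation argument: apply Lemma~\ref{lem: invop} to $\boldsymbol{F}$ with a strictly smaller target, then transfer the bound to $\boldsymbol{F}_n$ using the operator-norm convergence and a telescoping bound on the difference of $k$-fold compositions.

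\textbf{Step 1 (choice of $k$).} Fix $\gamma\in(0,1)$ and set $\gamma' := \gamma/2 \in (0,1)$. By Lemma~\ref{lem: invop}, since $\boldsymbol{F}\in\mathcal{R}$, there exists $k\ge1$ with $\|\boldsymbol{F}^{(k)}\|_{op}\le \gamma'$. In particular $\|\boldsymbol{F}^{(k)}\|_{op}\le\gamma$, which is the first claim. This $k$ depends only on $\boldsymbol{F}$ and $\gamma$, not on $n$.

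\textbf{Step 2 (telescoping).} For this fixed $k$, write
\[
\boldsymbol{F}_n^{(k)}-\boldsymbol{F}^{(k)} = \sum_{j=0}^{k-1} \boldsymbol{F}_n^{(j)}\,(\boldsymbol{F}_n-\boldsymbol{F})\,\boldsymbol{F}^{(k-1-j)} .
\]
This identity is checked by expanding, since the intermediate terms cancel. Apply submultiplicativity of $\|\cdot\|_{op}$ (Lemma~\ref{lem:compOpNorm}). Since $\|\boldsymbol{F}_n\|_{op}\le1$ and $\|\boldsymbol{F}\|_{op}\le 1$ (both lie in $\mathcal{R}$), every power has operator norm at most $1$. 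Hence
\[
\|\boldsymbol{F}_n^{(k)}-\boldsymbol{F}^{(k)}\|_{op}\le k\,\|\boldsymbol{F}_n-\boldsymbol{F}\|_{op}.
\]

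\textbf{Step 3 (conclusion).} Because $\|\boldsymbol{F}_n-\boldsymbol{F}\|_{op}\to0$, there is $n_0$ such that $k\|\boldsymbol{F}_n-\boldsymbol{F}\|_{op}\le\gamma/2$ for all $n\ge n_0$. For such $n$, the triangle inequality gives
\[
\|\boldsymbol{F}_n^{(k)}\|_{op}\le \|\boldsymbol{F}^{(k)}\|_{op} + \|\boldsymbol{F}_n^{(k)}-\boldsymbol{F}^{(k)}\|_{op} \le \gamma/2+\gamma/2=\gamma .
\]

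There is no real obstacle. The only care point is Step 1: $k$ must be chosen from $\boldsymbol{F}$ alone, with slack $\gamma/2$, before $n$ is taken large, so that the same $k$ works uniformly in $n$. Also, the uniform bound $\|\boldsymbol{F}_n^{(j)}\|_{op}\le1$ from membership in $\mathcal{R}$ is what keeps the telescoping constant equal to $k$, independent of $n$. The constants $C,\alpha$ mentioned in the statement are not needed.
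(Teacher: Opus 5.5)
Your proposal is correct and is essentially the paper's own proof. The paper also picks $k$ from $\boldsymbol{F}$ alone with $\|\boldsymbol{F}^{(k)}\|_{op}\le\gamma/2$, uses the same telescoping identity (indexed in reverse) to get $\|\boldsymbol{F}_n^{(k)}-\boldsymbol{F}^{(k)}\|_{op}\le k\|\boldsymbol{F}_n-\boldsymbol{F}\|_{op}$ from the unit operator-norm bounds, and concludes with the triangle inequality for large $n$.
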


\begin{proof}
We first use the identity
$$\boldsymbol{F}^{(k)}_n - \boldsymbol{F}^{(k)} = \sum_{j=0}^{k-1}\boldsymbol{F}^{(k-1-j)}_n(\boldsymbol{F}_n - \boldsymbol{F})\boldsymbol{F}^{(j)},$$
and thus
\begin{equation}
\label{eq:opseqpow}
\|\boldsymbol{F}^{(k)}_n - \boldsymbol{F}^{(k)}\|_{op} \le \|(\boldsymbol{F}_n - \boldsymbol{F})\|_{op}\sum_{j=0}^{k-1}\|\boldsymbol{F}^{(k-1-j)}_n\|_{op}\|\boldsymbol{F}^{(j)}\|_{op} \le k\|(\boldsymbol{F}_n - \boldsymbol{F})\|_{op}\,.
\end{equation}
From Lemma \ref{lem: invop} $\exists k\ge1$ such that 
$\|\boldsymbol{F}^{(k)}\|\le\frac{\gamma}{2}$. Using \eqref{eq:opseqpow}, given such a $k$ we can find an $N\ge1$ such that $\forall n\ge N$, $\|\boldsymbol{F}^{(k)}_n - \boldsymbol{F}^{(k)}\|_{op} \le \gamma/2$. Putting these together yields
\begin{align*}
\|\boldsymbol{F}^{(k)}_n\|_{op} \le \|\boldsymbol{F}^{(k)}\|_{op} + \|\boldsymbol{F}^{(k)}_n - \boldsymbol{F}^{(k)}\|_{op} \le \gamma/2 + \gamma/2 = \gamma
\end{align*}
as required.
\end{proof}

\begin{theorem}
\label{thm:kconvergenceseq}
Take $X\in\mathcal{D}_T(L_1)$ and let $\{X^n\}_{n\geq1}$ be a sequence in $\mathcal{D}_T(L_1)$. 
Let $(\Phi_F(X),\Psi_F(X))$ and $\left(\{\Phi_{F_n}(X^n)\}_{n\geq1},\{\Psi_{F_n}(X^n)\}_{n\geq1}\right)$ be the corresponding reflected processes and regulators associated with operators $\boldsymbol{F}\in\mathcal{R}$ and $\{\boldsymbol{F}_n\}_{n\geq1}\in\mathcal{R}$ satisfying $\|\boldsymbol{F}_n - \boldsymbol{F}\|_{op}\to0$ as $n\to\infty$.
Then given any $\gamma\in(0,1)$ there exists $k\ge1$ such that for sufficiently large $n$ the following hold:
\begin{enumerate}[label=(\roman*)]
\item
\begin{equation}
\|\Psi_{F_n}(X^n)-\Psi_F(X)\|_{T,1} \leq \frac{k}{1-\gamma}\|X^n-X\|_{T,1} + \frac{k^2\|X\|_{T,1}}{(1-\gamma)^2}\|\boldsymbol{F}_n-\boldsymbol{F}\|_{op}\,,
\end{equation}
\item
\begin{equation}
\begin{split}
    \|\Phi_{F_n}(X^n)-\Phi_F(X)\|_{T,1}  &\leq \left(1+\frac{2k}{1-\gamma}\right)\|X^n-X\|_{T,1} \\ & \quad + \left(\frac{2k^2}{(1-\gamma)^2}+\frac{k}{1-\gamma}\right)\|X\|_{T,1}\|\boldsymbol{F}_n-\boldsymbol{F}\|_{op}\,.
\end{split}
\end{equation}
\end{enumerate}
\end{theorem}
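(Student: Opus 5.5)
The plan is to reduce Theorem \ref{thm:kconvergenceseq} to Theorem \ref{thm:kconvergence}, using Lemma \ref{lem:opseqnorm} to give $\boldsymbol{F}$ and all $\boldsymbol{F}_n$ (for large $n$) a common pair of bounded parameters. Fix $\gamma\in(0,1)$. Lemma \ref{lem:opseqnorm} gives $k\ge1$ and $n_0$ with $\|\boldsymbol{F}^{(k)}\|_{op}\le\gamma$ and $\|\boldsymbol{F}_n^{(k)}\|_{op}\le\gamma$ for all $n\ge n_0$. Thus $(\gamma,k)$ serves as bounded parameters both for $\boldsymbol{F}$ and for every $\boldsymbol{F}_n$ with $n\ge n_0$. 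This uniformity is the only substantive point. The constants in Theorem \ref{thm:kconvergence} depend on the operators only through their bounded parameters, and Lemma \ref{lem: invop} confirms that $\|(\mathbf{1}-\boldsymbol{F}_n)^{-1}\|_{op}\le k/(1-\gamma)$ depends on nothing else.

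Next, for $n\ge n_0$, I apply Theorem \ref{thm:kconvergence} with $\boldsymbol{F}_1=\boldsymbol{F}$, $X^1=X$, $(\gamma_1,k_1)=(\gamma,k)$, and $\boldsymbol{F}_2=\boldsymbol{F}_n$, $X^2=X^n$, $(\gamma_2,k_2)=(\gamma,k)$. Part (i) of that theorem then gives
\[
\|\Psi_{F_n}(X^n)-\Psi_F(X)\|_{T,1}\le \frac{k}{1-\gamma}\|X^n-X\|_{T,1}+\frac{k^2\|X\|_{T,1}}{(1-\gamma)^2}\|\boldsymbol{F}_n-\boldsymbol{F}\|_{op},
\]
which is claim (i). Part (ii) gives claim (ii) in the same way: the coefficient $1+2k/(1-\gamma)$ multiplies $\|X^n-X\|_{T,1}$, and the coefficient $2k^2/(1-\gamma)^2+k/(1-\gamma)$ multiplies $\|X\|_{T,1}\|\boldsymbol{F}_n-\boldsymbol{F}\|_{op}$.

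It matters that $X$ and $\boldsymbol{F}$ play the role of index $1$. The norm $\|X^1\|_{T,1}$ on the right-hand side of Theorem \ref{thm:kconvergence} then becomes the fixed quantity $\|X\|_{T,1}$ rather than the varying $\|X^n\|_{T,1}$, which is exactly the form stated.

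I expect no real obstacle. The only care needed is with quantifier order: $k$ must be chosen depending only on $\gamma$ and $\boldsymbol{F}$, and only then is "sufficiently large $n$" fixed, which is precisely what Lemma \ref{lem:opseqnorm} provides.
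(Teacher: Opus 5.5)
Your proposal is correct and follows the paper's proof. The paper likewise uses Lemma \ref{lem:opseqnorm} to obtain common bounded parameters $(\gamma,k)$ for $\boldsymbol{F}$ and all sufficiently large $\boldsymbol{F}_n$, then applies Theorem \ref{thm:kconvergence} with $(\boldsymbol{F},X)$ as index $1$ and $(\boldsymbol{F}_n,X^n)$ as index $2$.
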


\begin{proof}
The results follow from applying Lemma \ref{lem:opseqnorm} to Theorem \ref{thm:kconvergence}.
\end{proof}

\allowdisplaybreaks

\medskip

\section{Proofs for Convergence Results} 
\label{sec:convproof}

\subsection{Approximation of Queueing Process with Intermediate Process}
First note that we can write the queueing process $Q_i^N(t)$ as
\begin{align*} 
Q_i^N(t) &= Q_i^N(0) + A_i^N(t) + \sum_{j=1}^N   p^N_{ji} \mu_j^N B_j^N(t) -\sum_{j=0}^N p^N_{ij} \mu_i^N B_i^N(t) \\
& \quad + \sum_{j=1}^N  (\Poisson^N_{ji}(\mu_j^N B_j^N(t)) - p^N_{ji} \mu_j^N B_j^N(t)) - \sum_{j=0}^N (\Poisson^N_{ij}(\mu_i^N B_i^N(t)) - p^N_{ij} \mu_i^N B_i^N(t))  \\
&= Q_i^N(0) + \lambda^N_it + \Big(\frac{1}{N}\sum_{j=1}^N   G^N_{ji} \mu_j^N - \mu_i^N\Big) t +( A_i^N(t)-\lambda^N_it )  \\
& \quad+ \sum_{j=1}^N  (\Poisson^N_{ji}(\mu_j^N B_j^N(t)) - \frac{1}{N}G^N_{ji} \mu_j^N B_j^N(t)) - \sum_{j=0}^N (\Poisson^N_{ij}(\mu_i^N B_i^N(t)) - \frac{1}{N}G^N_{ij} \mu_i^N B_i^N(t))  \\
&  \quad-  \frac{1}{N}\sum_{j=1}^N   G^N_{ji} \mu_j^N I_j^N(t) + \mu_i^N I_i^N(t).
\end{align*}
Using \eqref{eq: scaledprocess}, the scaled process $\bar{Q}_i^N(t)$ can then be written as follows: 
\begin{equation}  \label{eqn-barQNi-rep1}
\begin{split}
\bar{Q}_i^N(t)
&= \bar{Q}_i^N(0) + \lambda^N_i t  +\Big(\frac{1}{N}\sum_{j=1}^N   G^N_{ji} \mu_j^N - \mu_i^N\Big) t  + ( \bar{A}_i^N(t)-\lambda^N_it )  \\
& \quad + \frac{1}{N^\alpha}\sum_{j=1}^N  \Big(\Poisson^N_{ji}(\mu_j^N N^\alpha\bar{B}_j^N(t)) - p^N_{ji} \mu_j^N N^\alpha\bar{B}_j^N(t) \Big)  \\
& \quad -\frac{1}{N^\alpha} \sum_{j=0}^N \Big(\Poisson^N_{ij}(\mu_i^N N^\alpha\bar{B}_i^N(t)) - p^N_{ij} \mu_i^N N^\alpha\bar{B}_i^N(t) \Big)  \\
& \quad -  \frac{1}{N}\sum_{j=1}^N   G^N_{ji} \mu_j^N \bar{I}_j^N(t) +\mu_i^N \bar{I}_i^N(t)\,.
\end{split}
\end{equation}
Note that $\bar{I}^N_i(t)$ satisfies 
\begin{equation}
\int_0^\infty 1_{\{\bar{Q}^N_i(t)>0\}} d\bar{I}^N_i(t)=0, \quad \forall\,i=1,\dots,N.
\end{equation}
In matrix form, we have
\begin{equation}\label{eqn-barQN-rep1}
\bar{Q}^N = \bar{X}^N + \big(I-\frac{1}{N}(G^N)^T \big) \bar{Y}^N,
\end{equation}
where 
\begin{equation} \label{eqn-barXN-rep}
\begin{split}
\bar{X}^N_i(t) &=\bar{Q}_i^N(0) + \lambda^N_it  +\Big(\frac{1}{N}\sum_{j=1}^N   G^N_{ji} \mu_j^N - \mu_i^N\Big) t + (\bar{A}^N_i(t) - \lambda^N_it)\\
& \quad + \frac{1}{N^\alpha}\sum_{j=1}^N  \Big(\Poisson^N_{ji}(\mu_j^N N^\alpha\bar{B}_j^N(t)) - p^N_{ji} \mu_j^N N^\alpha\bar{B}_j^N(t) \Big) \\
& \quad -\frac{1}{N^\alpha} \sum_{j=0}^N \Big(\Poisson^N_{ij}(\mu_i^N N^\alpha\bar{B}_i^N(t)) - p^N_{ij} \mu_i^N N^\alpha\bar{B}_i^N(t) \Big)  \,, 
\end{split}
\end{equation}
and
\begin{align} \label{eqn-barYNi}
\bar{Y}^N_i = \mu^N_i\bar{I}^N_i\,.
\end{align}

{\bf The intermediate process:}
To bridge the gap between the original stochastic process and the deterministic limiting process, we introduce an intermediate process $\breve{Q}^N_{i}(t)$, which corresponds to a network model with the same size as the original network, and is constructed from $\bar{Q}^N_{i}(t)$ by removing the compensated Poisson processes. Writing $$\breve{X}^N_{i}(t) = \bar{Q}^N_i(0) + \lambda^N_it + \Big(\frac{1}{N}\sum_{j=1}^N   G^N_{ji} \mu_j^N - {\mu_i^N}\Big) t, \quad i=1,\dots,N,$$
and noting that $\breve{X}^N\in\mathcal{C}_T^N$, by applying \cite[Theorem 1]{HarrisonReiman1981reflected} we see that there is a unique solution $(\breve{Q}^N,\breve{I}^N)\in\mathcal{C}_T^N\times\mathcal{C}_T^{\uparrow N}$ (where $\mathcal{C}_T^{\uparrow N}$ is the subspace of increasing functions in $\mathcal{C}_T^N$) satisfying
\begin{align} \label{eqn-breveQNi}
 \breve{Q}^N_{i}(t) &= \bar{Q}^N_i(0) + \lambda^N_it + \Big(\frac{1}{N}\sum_{j=1}^N   G^N_{ji} \mu_j^N - {\mu_i^N}\Big) t -  \frac{1}{N}\sum_{j=1}^N   G^N_{ji} \mu_j^N \breve{I}_j^N(t) +\mu_i^N \breve{I}_i^N(t)
 \end{align}
 where $\breve{Q}^N\ge0$, $\breve{I}_i^N(t)=0$, $d\breve{I}_i^N(t)\ge0$ and $\breve{I}_i^N(t)$ satisfies 
 \begin{equation}
\label{eq:breveINi}
\int_0^\infty 1_{\{\breve{Q}^N_i(t)>0\}} d\breve{I}^N_i(t)=0, \quad \forall\,i=1,\dots,N.
\end{equation}

 In vector form, we write 
\[\breve{Q}^N = \breve{X}^N + (I-\frac{1}{N}(G^N)^T) \breve{Y}^N,\] where \[\breve{X}^N(t) = \bar{Q}^N(0) + \lambda^N t - (I-\frac{1}{N}(G^N)^T)\mu^Nt, \quad \breve{Y}^N = \operatorname{diag}(\mu^N)\breve{I}^N.\]

As this is just a finite-dimensional Skorokhod problem, it is clear that $\breve{Q}$ and $\breve{Y}$ are the reflected process and regulator of $\breve{X}$,  respectively.

\begin{lemma}
\label{lem:compqueue}
Under Assumption \ref{assumption:GNGop}, there exists some $K \in (0,\infty)$ such that
\begin{equation}
\label{eq:compqueue}
\frac{1}{N} \sum_{i=1}^N \E \sup_{0 \le t \le T} |\bar{Q}_i^N(t)-\breve{Q}_i^N(t)|\le K \frac{1}{N} \sum_{i=1}^N \E \sup_{0 \le t \le T} |\bar{X}_i^N(t)-\breve{X}_i^N(t)|.
\end{equation}
\end{lemma}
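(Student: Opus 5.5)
The plan is to lift both finite-dimensional Skorokhod problems to $\mathcal{D}_T(L_1)$ via the block-constant embedding and then invoke the Lipschitz bound of Theorem \ref{thm:lipschitz} with the operator $\boldsymbol{F}=(\boldsymbol{G}^N)^T$. Let $\mathcal{L}$ map $x=(x_1,\dots,x_N)\in\mathcal{D}_T^N$ to $(\mathcal{L}x)_u(t)=\sum_{i=1}^N x_i(t){\bf1}_{u\in K^N_i}$. Then
\[
\|\mathcal{L}x\|_{T,1}=\frac{1}{N}\sum_{i=1}^N\sup_{0\le t\le T}|x_i(t)|.
\]
A short computation shows that, for block-constant $f=\mathcal{L}y$,
\[
((\boldsymbol{G}^N)^T f)_u=\int_0^1 G^N(v,u)f_v\,dv=\frac1N\sum_j G^N_{ji}y_j \quad\text{for } u\in K^N_i.
\]
Hence $\mathcal{L}\big((I-\tfrac1N (G^N)^T)y\big)=({\bf1}-(\boldsymbol{G}^N)^T)\mathcal{L}y$.

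Applying $\mathcal{L}$ to \eqref{eqn-barQN-rep1} and to the representation of $\breve{Q}^N$ gives, pathwise,
\[
\mathcal{L}\bar{Q}^N=\mathcal{L}\bar{X}^N+({\bf1}-(\boldsymbol{G}^N)^T)\mathcal{L}\bar{Y}^N,\qquad \mathcal{L}\breve{Q}^N=\mathcal{L}\breve{X}^N+({\bf1}-(\boldsymbol{G}^N)^T)\mathcal{L}\breve{Y}^N.
\]
We then check the conditions \eqref{eq:cond1op}--\eqref{eq:cond3op}. Nonnegativity is clear. Monotonicity of $\mathcal{L}\bar{Y}^N=\mathcal{L}(\operatorname{diag}(\mu^N)\bar I^N)$ and $\bar Y^N(0)=0$ also hold. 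Complementarity holds coordinatewise from \eqref{eq:barINi} and \eqref{eq:breveINi}, and it transfers to every $u\in K^N_i$ because the embedded functions are constant on each block. The hypothesis $\mathcal{L}\bar X^N(0)=\bar Q^N(0)\ge0$ holds as well. Since $(\boldsymbol{G}^N)^T\in\mathcal{R}$ by Lemma \ref{lem:GNinR}, Theorem \ref{thm:summary} and Theorem \ref{thm:compchar} identify $\mathcal{L}\bar Q^N=\Phi_{(G^N)^T}(\mathcal{L}\bar X^N)$ and $\mathcal{L}\breve Q^N=\Phi_{(G^N)^T}(\mathcal{L}\breve X^N)$, sample path by sample path.

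Theorem \ref{thm:lipschitz}, \eqref{eq:LipschitzRef}, then yields pathwise
\[
\frac1N\sum_i\sup_t|\bar Q^N_i-\breve Q^N_i|\le\Big(1+\frac{2k_N}{1-\gamma}\Big)\frac1N\sum_i\sup_t|\bar X^N_i-\breve X^N_i|,
\]
where $(\gamma,k_N)$ are bounded parameters of $(\boldsymbol{G}^N)^T$. Taking expectations gives \eqref{eq:compqueue} with $K=1+2k_N/(1-\gamma)$, provided $k_N$ can be chosen independently of $N$.

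The main obstacle is this uniformity in $N$. I would handle it with Lemma \ref{lem:opseqnorm}, using $\|(\boldsymbol{G}^N)^T-\boldsymbol{G}^T\|_{op}\le CN^{-\alpha/2}\to0$ from Assumption \ref{assumption:GNGop}. This lemma gives a fixed $\gamma\in(0,1)$ and a $k$ with $\|((\boldsymbol{G}^N)^T)^{(k)}\|_{op}\le\gamma$ for all $N\ge N_0$. For the finitely many $N<N_0$, each $(\boldsymbol{G}^N)^T\in\mathcal{R}$ has its own finite bounded parameters, so taking the maximum of the resulting constants gives a single finite $K$ valid for all $N$.

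A minor technical point is that $\bar X^N\in\mathcal{D}_T(L_1)$ requires finite integrability. This holds almost surely because there are only finitely many blocks and the Poisson paths are càdlàg. If $\E\sup|\bar X^N_i-\breve X^N_i|$ were infinite, the inequality would be trivial.
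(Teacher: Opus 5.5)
Your proof is correct, but it takes a different route from the paper's.

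\textbf{The paper's route.} The paper stays in finite dimensions. It invokes the Lipschitz property of the multidimensional reflection map from \cite{Whitt2002}, whose constant is of the form $k/(1-\gamma)$ when the row sums of $(P^N)^k$ are at most $\gamma$. Whitt's version takes $k=N$, which is what blows up. The paper then uses the appendix computation showing that block operators compose like the corresponding scaled matrices, so that $\|((\boldsymbol{G}^N)^T)^{(k)}\|_{op}=\max_i\sum_j (P^N)^k_{ij}$. Bounded parameters of the operator thereby become row-sum bounds for the matrix, and Whitt's constant becomes independent of $N$.

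\textbf{Your route.} You instead lift both finite problems pathwise into $\mathcal{D}_T(L_1)$. You identify them, via Theorems \ref{thm:summary} and \ref{thm:compchar}, as solutions of the infinite-dimensional Skorokhod problem with $\boldsymbol{F}=(\boldsymbol{G}^N)^T$. You then apply Theorem \ref{thm:lipschitz} directly. This is the same device the paper later uses, but only for the deterministic process $\widetilde{Q}^N$.

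\textbf{What each route buys.} Yours needs only the one-step intertwining $\mathcal{L}\bigl(I-\tfrac1N(G^N)^T\bigr)=({\bf1}-(\boldsymbol{G}^N)^T)\mathcal{L}$, not the $k$-fold matrix/operator correspondence. It does not depend on re-reading an external proof to confirm that an arbitrary $k$ is admissible there, and it gives an explicit $K$.

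It also makes the uniformity in $N$ precise. The paper attributes uniformity to Assumption \ref{assumption:GNGop} together with Lemma \ref{lem: invop}. But Lemma \ref{lem: invop} produces $k$ only for a single fixed operator. What is actually needed is your combination: Lemma \ref{lem:opseqnorm} for $N\ge N_0$, plus a maximum over the finitely many remaining $N$, each covered by Lemma \ref{lem:GNinR}.

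The paper's route, in exchange, avoids having to verify the infinite-dimensional Skorokhod conditions for the jump process $\bar Q^N$. It relies only on standard finite-dimensional reflection-map theory.
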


\begin{proof}
This is a discrete version of the Lipschitz property seen in Theorem \ref{thm:lipschitz}.
See \cite{Whitt2002} for more detail. One should note that in \cite{Whitt2002}, $K$ depends on the size of the network through $\frac{N}{1-\gamma}$, where $\gamma\in(0,1)$ is such that $(P^N)^N$ has all row sums less than $\gamma$. While this dependence on $N$ is problematic as $N\to\infty$, Assumption \ref{assumption:GNGop} along with Lemma \ref{lem: invop} guarantees a tighter row sum bound of $\frac{k}{1-\gamma}$ uniformly as $N\to\infty$. A proof of this, relating matrix multiplication with operator composition, is given in Appendix \ref{sec:additional-pf}.
\end{proof}

We now find a bound on the right hand side of \eqref{eq:compqueue}.
First note that
\begin{align*}
\bar{X}^N_i(t)-\breve{X}^N_i(t) & = \bar{A}^N_i(t) - \lambda^N_it \\
& \quad+\frac{1}{N^\alpha}\sum_{j=1}^N \Big (\Poisson^N_{ji}(\mu_j^N N^\alpha\bar{B}_j^N(t)) - p^N_{ji} \mu_j^N N^\alpha\bar{B}_j^N(t) \Big) \\
&\quad -\frac{1}{N^\alpha} \sum_{j=0}^N \Big(\Poisson^N_{ij}(\mu_i^N N^\alpha\bar{B}_i^N(t)) - p^N_{ij} \mu_i^N N^\alpha\bar{B}_i^N(t)\Big)\,.
\end{align*}
Also note that the $j=i$ terms in the second and third summand cancel each other out.

The following is a standard result on the martingale property of Poisson processes.

\begin{lemma}
\label{lem:randomtimechange}
Let $N(t)$ be a Poisson process of rate 1. Let $\{Z(t)\}_{t\ge0}$ be a continuous increasing process  with $Z(0)=0$ and set $M(t)=N(Z(t))-Z(t)$. Then $M(t)$ is a martingale with respect to its natural filtration. Furthermore, the quadratic variation of $M$ is given by $\langle M \rangle_t = Z(t)$.
\end{lemma}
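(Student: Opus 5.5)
The plan is to realize $M$ as a stopped compensated Poisson process and to use optional stopping. The key observation is that for each fixed $t$ the random variable $Z(t)$ is a stopping time with respect to the filtration $\mathcal{G}_s$ of the Poisson process $N$. More generally, the statement is to be read in the setting where $Z(t)$ is adapted in such a way that $\{Z(t)\le s\}\in\mathcal{G}_s$; in the application, $Z=\mu_j^N N^\alpha\bar B_j^N$ has this property by the usual multiparameter random time-change construction. Since $Z$ is continuous and increasing, the family $\{Z(t)\}_{t\ge0}$ is a nondecreasing family of $\mathcal{G}$-stopping times. The time-changed filtration is then $\mathcal{F}_t:=\mathcal{G}_{Z(t)}$, and the natural filtration of $M$ is contained in it.

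The steps, in order, are as follows.
\begin{enumerate}
\item Recall that $\tilde N(s):=N(s)-s$ is a $\mathcal{G}_s$-martingale, and that $\tilde N(s)^2-s$ is also a $\mathcal{G}_s$-martingale. The second fact follows either from independent increments with $\mathrm{Var}(N(s))=s$, or from $[\tilde N]_s=N(s)$ and the fact that $N(s)-s$ is a martingale.
\item Apply the optional sampling theorem to the stopping times $Z(s)\le Z(t)$. To keep these bounded, first localize: replace $Z(t)$ by $Z(t)\wedge n$. This gives
\[
\E\big[\tilde N(Z(t)\wedge n)\mid\mathcal{G}_{Z(s)\wedge n}\big]=\tilde N(Z(s)\wedge n).
\]
Hence $M(\cdot\wedge\tau_n)$ is an $\mathcal{F}_t$-martingale, where $\tau_n=\inf\{t: Z(t)\ge n\}$ is an $\mathcal{F}$-stopping time. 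So $M$ is at least a local martingale.
\item Remove the localization. In the application $Z(t)\le \mu_j^N N^\alpha t$ is bounded for each $t$, and I will assume $\E Z(t)<\infty$. Doob's $L^2$ inequality applied to $\tilde N$ on $[0,n]$, or directly the bound $\E\sup_{r\le Z(t)\wedge n}\tilde N(r)^2\le 4\E Z(t)$, gives uniform integrability. Letting $n\to\infty$ then shows that $M$ is a true $\mathcal{F}_t$-martingale. A martingale with respect to $\mathcal{F}_t$ is also a martingale with respect to the smaller natural filtration of $M$, by the tower property.
\item For the quadratic variation, apply the same optional sampling argument to the martingale $\tilde N(s)^2-s$. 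This shows that $M(t)^2-Z(t)$ is an $\mathcal{F}_t$-martingale. Since $Z$ is continuous, increasing, adapted, and $Z(0)=0$, it is predictable, and uniqueness in the Doob–Meyer decomposition gives $\langle M\rangle_t=Z(t)$.
\end{enumerate}

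The main obstacle is the measurability requirement that $Z(t)$ be a $\mathcal{G}$-stopping time. This does not hold for an arbitrary continuous increasing process. For example, it fails if $Z$ anticipates future jumps of $N$. In the queueing application it holds because $B_j^N(t)$ is determined by the driving Poisson processes up to the corresponding internal times. I would therefore state the hypothesis explicitly, that $\{Z(t)\le s\}\in\mathcal{G}_s$, and cite the standard multiparameter time-change result (Ethier–Kurtz, Ch.~6) to verify it for $B_j^N$. Everything else in the argument is standard optional sampling plus the localization in step 2.
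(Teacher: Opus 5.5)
Your proof is correct and takes the same route as the paper. The paper cites the random time-change theorem for Poisson processes (Ethier--Kurtz, Thm.~6.1.3) together with the predictability of the continuous process $Z$, and your optional sampling along the stopping times $Z(t)$, with localization, uniform integrability and Doob--Meyer uniqueness, is what that citation provides.

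You are also right that the stopping-time hypothesis $\{Z(t)\le s\}\in\mathcal{G}_s$ is genuinely needed and is only tacit in the paper. Here $\mathcal{G}$ should be a filtration under which $N$ is still a rate-one Poisson process; in the application it is enlarged by the other, independent Poisson processes. For example, $Z(t)=t\tau_1/2$, with $\tau_1$ the first jump time of $N$, is continuous and increasing with $Z(0)=0$, yet gives $\E M(1)=-1/2\neq 0$.
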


\begin{proof}
First note that since $Z$ is continuous, it is a predictable process. The result follows from the random time change theorem for Poisson processes (see, e.g., Theorem 6.1.3 in \cite{ethier2009markov}). 
\end{proof}

The following lemma provides estimates on terms in \eqref{eq:compqueue}.

\begin{lemma}
\label{lem:compbound}
We have 
\begin{equation*}
\frac{1}{N} \sum_{i=1}^N \E \sup_{0 \le t \le T} \left|\bar{X}^N_i(t) -\breve{X}^N_i(t)\right| \le \frac{2T^\frac{1}{2}}{N^{\alpha/2}}\sqrt{\|\lambda^N\|_1 + 2\|\mu^N\|_1} \,\,.
\end{equation*}
Hence, 
\begin{equation*}
\frac{1}{N} \sum_{i=1}^N \E \sup_{0 \le t \le T} \left|\bar{Q}^N_i(t) -\breve{Q}^N_i(t)\right| \le \frac{CT^\frac{1}{2}}{N^{\alpha/2}}\sqrt{\|\lambda^N\|_1 + 2\|\mu^N\|_1}
\end{equation*}for some constant $C>0$.
\end{lemma}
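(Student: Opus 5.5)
The plan is to bound each $\E\sup_{t\le T}|\bar X^N_i(t)-\breve X^N_i(t)|$ by a Doob $L^2$ maximal inequality applied to a single martingale, and then average over $i$ using Jensen. The second statement then follows from Lemma \ref{lem:compqueue}.

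\textbf{Step 1: represent the difference as one martingale.} Fix $i$ and write $\bar X^N_i-\breve X^N_i = M^N_i$. Here
\[
M^N_i(t)= N^{-\alpha}\Big[\big(A^N_i(N^\alpha t)-\lambda^N_iN^\alpha t\big)+\sum_{j\ne i}\tilde\Poisson^N_{ji}\big(\mu^N_jN^\alpha\bar B^N_j(t)\big)-\sum_{j\ne i, j\ge0}\tilde\Poisson^N_{ij}\big(\mu^N_iN^\alpha\bar B^N_i(t)\big)\Big],
\]
and $\tilde\Poisson(s)=\Poisson(s)-(\text{rate})s$ denotes the compensated process. The $j=i$ terms cancel, as noted above. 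Each term is a time-changed compensated Poisson process with a continuous increasing time change. By Lemma \ref{lem:randomtimechange}, each term is a martingale with respect to the joint filtration generated by all the primitive processes. I would take the multiparameter filtration in the standard way, as in Ethier--Kurtz. The predictable quadratic variations are $\lambda^N_iN^\alpha t$, $p^N_{ji}\mu^N_jN^\alpha\bar B^N_j(t)$ and $p^N_{ij}\mu^N_iN^\alpha \bar B^N_i(t)$.

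The processes are independent and a.s. have no common jumps, so the cross-variations vanish. Hence
\[
\E\langle M^N_i\rangle_T \le N^{-\alpha}\Big(\lambda^N_iT+\sum_j p^N_{ji}\mu^N_jT+\sum_{j\ge0}p^N_{ij}\mu^N_iT\Big),
\]
using $\bar B\le t$. The last sum is at most $\mu^N_iT$ because $\sum_{j\ge0}p^N_{ij}= 1$.

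\textbf{Step 2: Doob and averaging.} Doob's inequality gives $\E\sup_{t\le T}|M^N_i(t)|\le 2(\E\langle M^N_i\rangle_T)^{1/2}$. Averaging over $i$ and applying Jensen (concavity of the square root) gives
\[
\frac1N\sum_i\E\sup|M_i^N|\le 2\Big(\frac1N\sum_i\E\langle M^N_i\rangle_T\Big)^{1/2}.
\]
Now $\frac1N\sum_i\lambda^N_i=\|\lambda^N\|_1$ and $\frac1N\sum_i\mu^N_i=\|\mu^N\|_1$.

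For the cross term, $\frac1N\sum_i\sum_j \frac1N G^N_{ji}\mu^N_j=\frac1N\sum_j\mu^N_j\big(\frac1N\sum_iG^N_{ji}\big)\le\|\mu^N\|_1$ by \eqref{eq:GNsumN}. Summing the three pieces, the average quadratic variation is bounded by $TN^{-\alpha}(\|\lambda^N\|_1+2\|\mu^N\|_1)$. This yields the first claimed bound $2T^{1/2}N^{-\alpha/2}\sqrt{\|\lambda^N\|_1+2\|\mu^N\|_1}$.

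\textbf{Step 3: transfer to the queues.} Plug the first bound into \eqref{eq:compqueue} from Lemma \ref{lem:compqueue}. Its constant $K$ is uniform in $N$, so the second inequality holds with $C=2K$.

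\textbf{Main obstacle.} The delicate point is Step 1. The time changes $\bar B^N_j$ depend on all the Poisson processes, so I must justify the martingale property and the orthogonality of the different time-changed components with respect to a common filtration. I would handle this via the multiparameter random time change theorem, which gives orthogonal martingales since the independent Poisson processes have no simultaneous jumps. A key simplification is that only the expected quadratic variation is needed, and it is bounded trivially using $\bar B^N_j(t)\le t$.
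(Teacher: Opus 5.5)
Your proposal is correct and follows essentially the same route as the paper. Both write $\bar X^N_i-\breve X^N_i$ as a sum of orthogonal time-changed compensated Poisson martingales, bound the expected quadratic variation using $\bar B^N_j(t)\le t$ and the row-sum bound \eqref{eq:GNsumN}, apply Doob's $L^2$ maximal inequality, average over $i$ with Jensen, and transfer to the queues via Lemma~\ref{lem:compqueue}. Your use of the multiparameter random time change theorem to justify the common filtration and the vanishing cross-variations is more careful than the paper, which invokes the single-process Lemma~\ref{lem:randomtimechange} and simply asserts that there is no cross variation.
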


\begin{proof}
For fixed $i \in [N]$, let 
\begin{align*}
&M^N_i(t)= A^N_i(N^\alpha t) - \lambda^N_iN^\alpha t ,\\
&M^N_{ji}(t)=\Poisson^N_{ji}(\mu_j^N N^\alpha\bar{B}_j^N(t)) - p^N_{ji} \mu_j^N N^\alpha\bar{B}_j^N(t), \quad j\in[N]\backslash \{i\}\\
&M^N_{ij}(t)=\Poisson^N_{ij}(\mu_i^N N^\alpha\bar{B}_i^N(t)) - p^N_{ij} \mu_i^N N^\alpha\bar{B}_i^N(t), \quad j\in\{0\}\cup[N]\backslash \{i\}. 
\end{align*}
Note that we can write 
\begin{equation}
\label{eq:intermediate}
\bar{X}^N_i(t)-\breve{X}^N_i(t) = \frac{1}{N^\alpha}\left( M^N_i(t) + \sum_{j\in[N]\backslash \{i\}}M^N_{ji}(t) {}-{} \sum_{j\in\{0\}\cup[N]\backslash \{i\}}M^N_{ij}(t)\right).
\end{equation}
Applying Lemma \ref{lem:randomtimechange} yields
\begin{align*}
&\langle M^N_i\rangle_t =\lambda^N_iN^\alpha t, \\
&\langle M^N_{ji}\rangle_t =p^N_{ji} \mu_j^N N^\alpha\bar{B}_j^N(t), \\
&\langle M^N_{ij}\rangle_t=p^N_{ij} \mu_i^N N^\alpha\bar{B}_i^N(t)\,.
\end{align*}
Since each martingale term has no cross variation, it follows that
\begin{align}
\langle \bar{X}^N_i -\breve{X}^N_i \rangle_t &= \frac{1}{N^{2\alpha}}\left(\lambda^N_iN^\alpha t + \sum_{j\in[N]\backslash \{i\}}p^N_{ji} \mu_j^N N^\alpha\bar{B}_j^N(t) + \sum_{j\in\{0\}\cup[N]\backslash \{i\}}p^N_{ij} \mu_i^N N^\alpha\bar{B}_i^N(t)\right) \notag \\
&= \frac{1}{N^{\alpha}}\left(\lambda^N_i t + \sum_{j\in[N]\backslash \{i\}}p^N_{ji} \mu_j^N \bar{B}_j^N(t) + (1-p^N_{ii}) \mu_i^N \bar{B}_i^N(t)\right) \notag \\
&\leq \frac{t}{N^{\alpha}}\left(\lambda^N_i + \sum_{j=1}^N p^N_{ji} \mu_j^N + \mu_i^N \right), 
\label{eq:quadvarto0}
\end{align}
where we used that $\bar{B}_i^N(t) \le t$. Since $\bar{X}^N_i -\breve{X}^N_i$ is a martingale, applying Doob's maximal inequality, we obtain 
\begin{equation}
\label{eq:doobmax}
\E \sup_{0 \le t \le T} \left(\bar{X}^N_i(t) -\breve{X}^N_i(t)\right)^2 \le 4 \E \langle \bar{X}^N_i -\breve{X}^N_i \rangle_T.
\end{equation}
Combining \eqref{eq:quadvarto0} and \eqref{eq:doobmax} with Jensen's inequality yields
\begin{align*}
\E \sup_{0 \le t \le T} \left|\bar{X}^N_i(t) -\breve{X}^N_i(t)\right| &\le \left(\E \sup_{0 \le t \le T} \left(\bar{X}^N_i(t) -\breve{X}^N_i(t)\right)^2\right)^\frac{1}{2} \\
&\le 2 \left(\E \langle \bar{X}^N_i -\breve{X}^N_i \rangle_T\right)^\frac{1}{2} \\
& \le 2T^\frac{1}{2}N^{-\frac{\alpha}{2}}\left(\lambda^N_i + \sum_{j=1}^N p^N_{ji} \mu_j^N + \mu_i^N \right)^{\frac{1}{2}}. 
\end{align*}
Hence, we obtain 
\begin{align*}
\frac{1}{N} \sum_{i=1}^N \E \sup_{0  \le t \le T} \left|\bar{X}^N_i(t) -\breve{X}^N_i(t)\right| 
& \le 2T^\frac{1}{2}N^{-\frac{\alpha}{2}}\frac{1}{N}\sum_{i=1}^N\left(\lambda^N_i + \sum_{j=1}^N p^N_{ji} \mu_j^N + \mu_i^N \right)^{\frac{1}{2}} \\
& \le 2T^\frac{1}{2}N^{-\frac{\alpha}{2}}\sqrt{\frac{1}{N}\sum_{i=1}^N\left(\lambda^N_i + \sum_{j=1}^N p^N_{ji} \mu_j^N + \mu_i^N \right)} \\
& \le  \frac{2T^\frac{1}{2}}{N^{\alpha/2}}\sqrt{\|\lambda^N\|_1 + 2\|\mu^N\|_1}\,,
\end{align*}
where we used Jensen's inequality in the second inequality above and the $\lambda^N, \mu^N$ in the last line are to be understood as their piecewise functions on $[0,1]$. 
This gives the first claim.
Finally, applying Lemma \ref{lem:compqueue}, we obtain the second claim. 
\end{proof}

\subsection{Lifting the Intermediate Process to Infinite Dimension}

We now construct $\widetilde{Q}_u^N(t)$ from $\breve{Q}_i^N(t)$ as a piecewise function construction over the interval $[0,1]$: define 
\begin{equation}
\label{eq:infliftqueue}
\widetilde{Q}_u^N(t) := \sum_{i=1}^N \breve{Q}_i^N(t) {\bf1}_{u \in K^N_i }\,, \quad u \in [0,1], \,\, t \ge 0, 
\end{equation}
where $K^N_i$ is a partition of $[0,1]$ as in \eqref{eqn-GN-uv}. It follows that $\widetilde{Q}^N(0) \equiv \bar{Q}^N(0)$ as defined in \eqref{initQinft}. Similarly, define 
\begin{equation}\label{eq:infliftlm}
\widetilde{I}_u^N(t) := \sum_{i=1}^N \breve{I}_i^N(t) {\bf1}_{u \in K^N_i}\,, \quad u \in [0,1], \,\, t \ge 0. 
\end{equation}
It is evident that $\widetilde{Q}_u^N \in \mathcal{C}_T(L_1)$ and $ \widetilde{I}^N\in\mathcal{C}_T^\uparrow (L_1)$. From the construction, it's immediate that \begin{equation}
\label{eq:lift0}
\big|\breve{Q}_i^N(t) - \widetilde{Q}^N_{i/N}(t)\big|=0\,. 
\end{equation}

\begin{lemma}
With definitions \eqref{eq:infliftqueue}--\eqref{eq:infliftlm}, 
the lifted system $\widetilde{Q}_u^N(t)$ can be written as: for $u \in [0,1]$ and $t \ge 0$, 
\begin{equation*}
\widetilde{Q}_u^N(t) = \bar{Q}_u^N(0) + \lambda_u^Nt + \Big(\int_0^1 G^N(v,u)\mu^N_v dv - \mu^N_u \Big) t -  \int_0^1 G^N(v,u)  \mu^N_v \widetilde{I}^N_v  dv  +  \mu^N_u  \widetilde{I}_u^N(t).
\end{equation*}
Equivalently, we write
\begin{equation} \
\label{eq:lift1}
\widetilde{Q}^N (t) = \widetilde{X}^N (t) + (\mathbf{1}-(\boldsymbol{G}^N)^T) \widetilde{Y}^N (t), 
\end{equation}
where
\begin{align}
\label{eq:lift2}
\widetilde{X}^N(t)  & = \bar{Q}^N(0) + \lambda^N t - (\mathbf{1}-(\boldsymbol{G}^N)^T)\mu^Nt, \\ \widetilde{Y}^N (t) &= {\operatorname{diag}}(\mu^N)\widetilde{I}^N(t).\label{eq:lift3}
\end{align}
\end{lemma}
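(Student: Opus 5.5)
The identity is checked pointwise. Fix $u\in[0,1]$ and $t\ge0$, and let $i$ be the unique index with $u\in K^N_i$. By \eqref{eq:infliftqueue}, \eqref{initQinft} and the definitions of $\lambda^N_u,\mu^N_u$, we have $\widetilde{Q}^N_u(t)=\breve{Q}^N_i(t)$, $\bar{Q}^N_u(0)=\bar{Q}^N_i(0)$, $\lambda^N_u=\lambda^N_i$ and $\mu^N_u=\mu^N_i$. By \eqref{eq:infliftlm}, $\widetilde{I}^N_u(t)=\breve{I}^N_i(t)$. So the right-hand side of \eqref{eqn-breveQNi} already matches the target expression, except for the two sums over $j$.

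For those sums, use the block-constant form \eqref{eqn-GN-uv}. For $u\in K^N_i$,
\begin{equation*}
\int_0^1 G^N(v,u)\mu^N_v\,dv=\sum_{j=1}^N\int_{K^N_j}G^N_{ji}\mu^N_j\,dv=\frac{1}{N}\sum_{j=1}^N G^N_{ji}\mu^N_j,
\end{equation*}
since $|K^N_j|=1/N$. In the same way,
\begin{equation*}
\int_0^1 G^N(v,u)\mu^N_v\widetilde{I}^N_v(t)\,dv=\frac{1}{N}\sum_{j=1}^N G^N_{ji}\mu^N_j\breve{I}^N_j(t).
\end{equation*}
Substituting both into \eqref{eqn-breveQNi} gives the displayed formula for $\widetilde{Q}^N_u(t)$. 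Since $u$ and $t$ were arbitrary, it holds for all $u\in[0,1]$ and $t\ge0$.

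For the operator form, recall the transpose \eqref{eq:OperatorTranspose}: $((\boldsymbol{G}^N)^Tf)(u)=\int_0^1G^N(v,u)f(v)\,dv$. Hence $\int_0^1G^N(v,u)\mu^N_v\,dv-\mu^N_u=-((\mathbf{1}-(\boldsymbol{G}^N)^T)\mu^N)(u)$. Likewise, with $\widetilde{Y}^N=\operatorname{diag}(\mu^N)\widetilde{I}^N$,
\begin{equation*}
\mu^N_u\widetilde{I}^N_u(t)-\int_0^1G^N(v,u)\mu^N_v\widetilde{I}^N_v(t)\,dv=((\mathbf{1}-(\boldsymbol{G}^N)^T)\widetilde{Y}^N)_u(t).
\end{equation*}
Collecting terms gives \eqref{eq:lift1}, with $\widetilde{X}^N$ and $\widetilde{Y}^N$ as in \eqref{eq:lift2}--\eqref{eq:lift3}.

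The only point needing care is that these integrals are finite and the operator expressions are well defined. The kernel $G^N$ is a finite sum of bounded step functions, and $\mu^N,\widetilde{I}^N(t)$ are step functions, so every integrand is a bounded step function. Thus there is no real obstacle; the proof is a bookkeeping check that the block indexing of the kernel matches the transpose convention $G^N(v,u)\leftrightarrow G^N_{ji}$.
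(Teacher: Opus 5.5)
Your proposal is correct and follows essentially the same route as the paper: both substitute the block-constant forms of $G^N$, $\mu^N$ and $\widetilde{I}^N$ into \eqref{eqn-breveQNi} and use $|K^N_j|=1/N$ to turn the sums $\frac{1}{N}\sum_{j} G^N_{ji}\mu^N_j(\cdot)$ into integrals against $G^N(v,u)$, then read off the transpose operator form. The only cosmetic difference is that you first fix the block index $i$ with $u\in K^N_i$, whereas the paper keeps the indicators ${\bf1}_{u\in K^N_i}$ and interchanges the double sums.
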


\begin{proof}
By \eqref{eqn-breveQNi} and  \eqref{eq:infliftqueue}, we obtain
\begin{align*} 
\widetilde{Q}_u^N(t)
&= \bar{Q}_u^N(0) + \lambda_u^Nt + \sum_{i=1}^N\left(\frac{1}{N}\sum_{j=1}^N   G^N_{ji} \mu_j^N - \mu_i^N\right) {\bf1}_{u \in K^N_i} t\\
& \quad -  \sum_{i=1}^N {\bf1}_{u \in K^N_i} \frac{1}{N}\sum_{j=1}^N   G^N_{ji} \mu_j^N \breve{I}_j^N(t) +\sum_{i=1}^N {\bf1}_{u \in K^N_i} {\mu_i^N \breve{I}_i^N(t)}\\
&= \bar{Q}_u^N(0) + \lambda_u^Nt + \left(\frac{1}{N}\sum_{j=1}^N\sum_{i=1}^N   G^N_{ji} \mu_j^N {\bf1}_{u \in K^N_i} - \sum_{i=1}^N\mu_i^N {\bf1}_{u \in K^N_i}\right)   t\\
&  \quad -  \frac{1}{N}\sum_{j=1}^N\sum_{i=1}^N     G^N_{ji} \mu_j^N \breve{I}_j^N(t) {\bf1}_{u \in K^N_i}+\sum_{i=1}^N  {\mu_i^N \breve{I}_i^N(t)}{\bf1}_{u \in K^N_i}\\
&= \bar{Q}_u^N(0) + \lambda_u^Nt + \Big(\int_0^1 G^N(v,u)\mu^N_v dv - \mu^N_u \Big) t -  \int_0^1 G^N(v,u)  \mu^N_v \widetilde{I}^N_v  dv  +  \mu^N_u  \widetilde{I}_u^N(t),
\end{align*}
where the final equality comes from
\begin{align*}
&\frac{1}{N}\sum_{j=1}^N\sum_{i=1}^N     G^N_{ji} \mu_j^N \breve{I}_j^N(t) {\bf1}_{u \in K^N_i} \\&=\sum_{i=1}^N  {\bf1}_{u \in K^N_i}
     \sum_{j=1}^N   G^N_{ji} \mu_j^N \breve{I}_j^N(t)\int_0^1 {\bf1}_{v\in K^N_j}  dv\\
&= \sum_{i=1}^N  {\bf1}_{u \in K^N_i}\int_0^1 
     \sum_{j=1}^N   G^N_{ji} \mu_j^N \breve{I}_j^N(t){\bf1}_{v\in K^N_j}  dv\\
&= \int_0^1  \bigg(\sum_{i=1}^N \sum_{j=1}^N G^N_{ji} {\bf1}_{u \in K^N_i} {\bf1}_{v\in K^N_j}\bigg)  \bigg(\sum_{j=1}^N \mu^N_j {\bf1}_{v\in K^N_j}\bigg)  \bigg(\sum_{j=1}^N \breve{I}^N_j(t){\bf1}_{v\in K^N_j}\bigg)  dv  \\
& = \int_0^1 G^N(v,u)  \mu^N_v \widetilde{I}^N_v  dv,
\end{align*}
and the identity $\frac{1}{N}\sum_{j=1}^N\sum_{i=1}^N   G^N_{ji} \mu_j^N {\bf1}_{u \in K^N_i} = \int_0^1 G^N(v,u)\mu^N_v dv$ follows similarly. 
\end{proof}

\begin{lemma}
Under Assumption \ref{assumption:GNGop}, $\widetilde{Q}^N$ and $\widetilde{Y}^N$ are the reflected process and regulator of $\widetilde{X}^N$ respectively. 
\end{lemma}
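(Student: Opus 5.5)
The plan is to verify directly that $(\widetilde{Y}^N,\widetilde{Q}^N)$ solves the system \eqref{eq:cond1op}--\eqref{eq:cond3op} with $X=\widetilde{X}^N$ and $\boldsymbol{F}=(\boldsymbol{G}^N)^T$. Once this is done, we invoke uniqueness from Theorem \ref{thm:summary}, which requires $(\boldsymbol{G}^N)^T\in\mathcal{R}$. This membership is exactly Lemma \ref{lem:GNinR} under Assumption \ref{assumption:GNGop}. Alternatively, one can appeal to the complementarity characterization, Theorem \ref{thm:compchar}, once feasibility is established.

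The verification has three steps.

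\textbf{Initial condition.} We first check that $\widetilde{X}^N(0)=\bar{Q}^N(0)\ge 0$, so the hypothesis of Theorem \ref{thm:summary} holds, and that $\widetilde{X}^N\in\mathcal{C}_T(L_1)$. The latter holds because all the functions involved are piecewise constant in $u$ with finitely many pieces, and $\|\mu^N\|_1,\|\lambda^N\|_1<\infty$.

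\textbf{Membership of the regulator.} Next, $\widetilde{Y}^N=\operatorname{diag}(\mu^N)\widetilde{I}^N$ lies in $\mathcal{D}_T^\uparrow(L_1)$, indeed in $\mathcal{C}_T^\uparrow(L_1)$. For each $u\in K^N_i$ we have $\widetilde{Y}^N_u(t)=\mu^N_i\breve{I}^N_i(t)$. This vanishes at $t=0$ and is nondecreasing, since $\mu_i^N\ge0$ and $\breve{I}^N_i$ is the nondecreasing idle-time process of the finite Skorokhod problem \eqref{eqn-breveQNi}.

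\textbf{Relation and complementarity.} The preceding lemma gives \eqref{eq:lift1}, i.e.\ $\widetilde{Q}^N=\widetilde{X}^N+(\mathbf{1}-(\boldsymbol{G}^N)^T)\widetilde{Y}^N$. Moreover $\widetilde{Q}^N_u(t)=\breve{Q}^N_i(t)\ge0$ for $u\in K^N_i$, so \eqref{eq:cond1op} holds. For complementarity, fix $u\in K^N_i$. Then
\[
\int_0^T \widetilde{Q}^N_u(t)\,d\widetilde{Y}^N_u(t)=\mu^N_i\int_0^T\breve{Q}^N_i(t)\,d\breve{I}^N_i(t)=0
\]
by \eqref{eq:breveINi}. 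The integrand vanishes wherever $\breve{I}^N_i$ increases, and $\breve{Q}^N_i\ge0$.

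Having \eqref{eq:cond1op}--\eqref{eq:cond3op} in hand, Theorem \ref{thm:summary} identifies $\widetilde{Y}^N=\Psi_{(G^N)^T}(\widetilde{X}^N)$ and $\widetilde{Q}^N=\Phi_{(G^N)^T}(\widetilde{X}^N)$. Equivalently, $\widetilde{Y}^N\in\mathbf{\Psi}_{(G^N)^T}(\widetilde{X}^N)$ together with complementarity gives minimality by Theorem \ref{thm:compchar}.

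The only genuinely delicate point is that the infinite-dimensional theory requires $(\boldsymbol{G}^N)^T\in\mathcal{R}$, and in particular $\rho((\boldsymbol{G}^N)^T)<1$. This is supplied by Lemma \ref{lem:GNinR}, which transfers $\rho(P^N)<1$ to the block-constant operator. This transfer works because the nonzero spectrum of $\boldsymbol{G}^N$ coincides with that of the matrix $\frac1N G^N=P^N$, acting on block-constant functions. Everything else is bookkeeping between block indices $i$ and points $u\in K^N_i$.
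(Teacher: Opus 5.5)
Your proposal is correct and follows the paper's argument. You verify \eqref{eq:cond1op}--\eqref{eq:cond3op} for $(\widetilde{Y}^N,\widetilde{Q}^N)$ blockwise from the finite-dimensional Skorokhod problem, then invoke uniqueness from Theorem \ref{thm:summary}, with $(\boldsymbol{G}^N)^T\in\mathcal{R}$ supplied by Lemma \ref{lem:GNinR}; you just make explicit the details on each block $K^N_i$ that the paper dismisses as clear from the construction.
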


\begin{proof}
$(\widetilde{Q}^N,\widetilde{Y}^N)$ clearly satisfies \eqref{eq:cond1op}--\eqref{eq:cond3op} from the construction in  \eqref{eq:lift1}--\eqref{eq:lift2}. Hence by uniqueness of the Skorokhod problem, the claim holds. 
\end{proof}

\begin{lemma} \label{lem:lift-lip}
Under Assumption \ref{assumption:GNGop}, there exist some constants $\kappa_1, \kappa_2>0$ such that for sufficiently large $N$, 
    \begin{equation} \label{eq:lift-lip1}
\|\widetilde{Q}^N-\bar{Q}\|_{T,1} \leq \kappa_1\|\widetilde{X}^N-\bar{X}\|_{T,1} + \kappa_2\|(\boldsymbol{G}^N)^T-\boldsymbol{G}^T\|_{op}\,. 
    \end{equation} 
\end{lemma}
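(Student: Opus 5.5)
The plan is to apply Theorem \ref{thm:kconvergenceseq} directly, with $\boldsymbol{F}_N \coloneq (\boldsymbol{G}^N)^T$, $\boldsymbol{F} \coloneq \boldsymbol{G}^T$, $X^N \coloneq \widetilde{X}^N$ and $X \coloneq \bar{X}$.

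\textbf{Step 1: verify the hypotheses.}
\begin{itemize}
\item By Assumption \ref{assumption:GNGop}, $\boldsymbol{G}^T\in\mathcal{R}$.
\item By Lemma \ref{lem:GNinR}, $(\boldsymbol{G}^N)^T\in\mathcal{R}$ for every $N$.
\item By \eqref{eq:GntoG}, $\|(\boldsymbol{G}^N)^T-\boldsymbol{G}^T\|_{op}\le CN^{-\alpha/2}\to 0$.
\item $\widetilde{X}^N$ and $\bar{X}$ are affine in $t$ with $L_1$ coefficients. Hence they belong to $\mathcal{C}_T(L_1)\subset\mathcal{D}_T(L_1)$, and their initial values are nonnegative.
\end{itemize}

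\textbf{Step 2: identify the two reflected processes.} By the preceding lemma, $\widetilde{Q}^N=\Phi_{(\boldsymbol{G}^N)^T}(\widetilde{X}^N)$. By Definition \ref{def:GFM} together with Proposition \ref{prop:Lipschitz}, $\bar{Q}=\Phi_{\boldsymbol{G}^T}(\bar{X})$.

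\textbf{Step 3: apply the theorem.} Fix any $\gamma\in(0,1)$, say $\gamma=1/2$. Lemma \ref{lem:opseqnorm} gives a $k$, independent of $N$, such that $(\gamma,k)$ are bounded parameters for both $\boldsymbol{G}^T$ and $(\boldsymbol{G}^N)^T$ once $N$ is sufficiently large. Part (ii) of Theorem \ref{thm:kconvergenceseq} then yields
\[
\|\widetilde{Q}^N-\bar{Q}\|_{T,1}\le \Big(1+\tfrac{2k}{1-\gamma}\Big)\|\widetilde{X}^N-\bar{X}\|_{T,1}+\Big(\tfrac{2k^2}{(1-\gamma)^2}+\tfrac{k}{1-\gamma}\Big)\|\bar{X}\|_{T,1}\,\|(\boldsymbol{G}^N)^T-\boldsymbol{G}^T\|_{op}.
\]
So we take $\kappa_1=1+2k/(1-\gamma)$ and $\kappa_2=\big(2k^2/(1-\gamma)^2+k/(1-\gamma)\big)\|\bar{X}\|_{T,1}$.

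\textbf{Step 4: check that $\kappa_2$ is finite and independent of $N$.} This is the only point needing care, and it reduces to showing $\|\bar{X}\|_{T,1}<\infty$. From \eqref{eq:Xbar},
\[
\|\bar{X}\|_{T,1}\le \|\bar{Q}(0)\|_1+T\big(\|\lambda\|_1+\|\mu\|_1+\|\boldsymbol{G}^T\mu\|_1\big)\le \|\bar{Q}(0)\|_1+T\big(\|\lambda\|_1+2\|\mu\|_1\big),
\]
where the last step uses $\|\boldsymbol{G}^T\|_{op}\le1$. This requires $\bar{Q}(0),\lambda,\mu\in L_1$, which holds by Assumption \ref{assumption:Nbound}. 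If one wants to rely only on Assumption \ref{assumption:GNGop}, as the statement literally does, the integrability of these primitives is implicit in the fluid model being well defined, since $\bar{X}\in\mathcal{C}_T(L_1)$ is part of Definition \ref{def:GFM}. In either case $\kappa_2$ depends only on $\bar{X}$, $\gamma$ and $k$, not on $N$.

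This is mainly bookkeeping. The one genuine subtlety is the uniformity of $k$ in $N$, and Lemma \ref{lem:opseqnorm} handles it; this is exactly why the statement is restricted to sufficiently large $N$.
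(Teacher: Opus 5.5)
Your proposal is correct and follows the paper's proof, which is the one-line application of Theorem \ref{thm:kconvergenceseq} to $(\boldsymbol{G}^N)^T$ and $\boldsymbol{G}^T$. Your explicit constants $\kappa_1,\kappa_2$ and the check that $\|\bar{X}\|_{T,1}<\infty$ fill in bookkeeping the paper leaves implicit; you are right that this finiteness check tacitly uses the integrability of $\bar{Q}(0),\lambda,\mu$ from Assumption \ref{assumption:Nbound}.
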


\begin{proof}
    The claim follows from Theorem \ref{thm:kconvergenceseq} applied to $(\boldsymbol{G}^N)^T$ and $\boldsymbol{G}^T$. 
\end{proof}

\begin{lemma} \label{lem:tQbQ-diff}
    Under Assumptions \ref{assumption:GNGop} and  \ref{assumption:Nbound}, we have for some $C''_T>0$, 
    \[
    \|\widetilde{Q}^N-\bar{Q}\|_{T,1} \leq C''_T N^{-\alpha/2}. 
    \]
\end{lemma}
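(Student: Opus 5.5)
The plan is to start from Lemma \ref{lem:lift-lip}, which gives for sufficiently large $N$
\[
\|\widetilde{Q}^N-\bar{Q}\|_{T,1} \leq \kappa_1\|\widetilde{X}^N-\bar{X}\|_{T,1} + \kappa_2\|(\boldsymbol{G}^N)^T-\boldsymbol{G}^T\|_{op}.
\]
By \eqref{eq:GntoG} the second term is at most $\kappa_2 C N^{-\alpha/2}$. The work is therefore to show $\|\widetilde{X}^N-\bar{X}\|_{T,1}\le C' N^{-\alpha/2}$. For the finitely many small $N$ not covered by Lemma \ref{lem:lift-lip}, I will enlarge the constant, using that $\|\widetilde{Q}^N\|_{T,1}$ and $\|\bar Q\|_{T,1}$ are finite for each fixed $N$, for instance via \eqref{eq:kregbound1}.

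Comparing \eqref{eq:lift2} with \eqref{eq:Xbar-new}, I write
\[
\widetilde{X}^N(t)-\bar{X}(t) = (\bar{Q}^N(0)-\bar{Q}(0)) + (\lambda^N-\lambda)t - (\mu^N-\mu)t + \big((\boldsymbol{G}^N)^T\mu^N - \boldsymbol{G}^T\mu\big)t.
\]
Since the time dependence is linear, the sup over $t\in[0,T]$ of each term is attained at $t=T$ (or $t=0$), up to the factor $T$. Taking $\|\cdot\|_{T,1}$, the first three terms are bounded by $(1+2T)CN^{-\alpha/2}$ via Assumption \ref{assumption:Nbound}. For the last term I split
\[
(\boldsymbol{G}^N)^T\mu^N - \boldsymbol{G}^T\mu = (\boldsymbol{G}^N)^T(\mu^N-\mu) + \big((\boldsymbol{G}^N)^T-\boldsymbol{G}^T\big)\mu .
\]
Then I apply the operator bound $\|\boldsymbol{F}f\|_1\le\|\boldsymbol{F}\|_{op}\|f\|_1$ (Lemma \ref{lem:opnorm}), with $\|(\boldsymbol{G}^N)^T\|_{op}\le 1$ from Lemma \ref{lem:GNinR}. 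This gives
\[
T\|\mu^N-\mu\|_1 + T\|\mu\|_1\,\|(\boldsymbol{G}^N)^T-\boldsymbol{G}^T\|_{op} \le T C N^{-\alpha/2}(1+\|\mu\|_1),
\]
which is finite because $\mu\in L_1$.

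Collecting the bounds gives $\|\widetilde{Q}^N-\bar{Q}\|_{T,1}\le C''_T N^{-\alpha/2}$ with $C''_T = \kappa_1 C(1+3T+T\|\mu\|_1)+\kappa_2 C$.

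The main point needing care is that $\kappa_1,\kappa_2$ in Lemma \ref{lem:lift-lip} must be independent of $N$. Tracing Theorem \ref{thm:kconvergenceseq}, they are $1+2k/(1-\gamma)$ and $\big(2k^2/(1-\gamma)^2+k/(1-\gamma)\big)\|\bar X\|_{T,1}$, with $(\gamma,k)$ fixed via Lemma \ref{lem:opseqnorm}. This requires $\|\bar X\|_{T,1}<\infty$, which follows from $\bar Q(0),\lambda,\mu\in L_1$ and $\|\boldsymbol{G}^T\|_{op}\le1$. One should also confirm that Theorem \ref{thm:kconvergenceseq} is applied with $\bar X$ in the role of $X^1$, so that the constant involves the fixed limit $\|\bar X\|_{T,1}$ rather than $\|\widetilde{X}^N\|_{T,1}$.
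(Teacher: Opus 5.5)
Your proposal is correct and matches the paper's proof: apply Lemma \ref{lem:lift-lip}, bound $\|\widetilde{X}^N-\bar{X}\|_{T,1}$ via the same splitting $(\mathbf{1}-(\boldsymbol{G}^N)^T)(\mu^N-\mu)-((\boldsymbol{G}^N)^T-\boldsymbol{G}^T)\mu$, and conclude from Assumptions \ref{assumption:GNGop} and \ref{assumption:Nbound}. Your extra care tidies up points the paper leaves implicit: keeping the factor $\kappa_1$ (which the paper's second display drops), covering the finitely many small $N$, and checking that the constants depend on the fixed $\|\bar{X}\|_{T,1}$ rather than on $N$.
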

\begin{proof}
First, by \eqref{eq:lift2} and \eqref{eq:Xbar-new}, 
    \begin{align*}
&\|\widetilde{X}^N - \bar{X}\|_{T,1} 
\\&\leq \|\bar{Q}^N(0)-\bar{Q}(0)\|_1 + T\|\lambda^N-\lambda\|_1 + T\|\mathbf{1}-(\boldsymbol{G}^N)^T\|_{op}\|\mu^N-\mu\|_1 + T\|(\boldsymbol{G}^N)^T-\boldsymbol{G}^T\|_{op}\|\mu\|_1 \\
&\leq \|\bar{Q}^N(0)-\bar{Q}(0)\|_1 + T\|\lambda^N-\lambda\|_1 + 2T\|\mu^N-\mu\|_1 + T\|(\boldsymbol{G}^N)^T-\boldsymbol{G}^T\|_{op}\|\mu\|_1. 
\end{align*}

Then, by \eqref{eq:lift-lip1}, we have 
\begin{align*}
\|\widetilde{Q}^N-\bar{Q}\|_{T,1} &\leq \kappa_1\|\widetilde{X}^N-\bar{X}\|_{T,1} + \kappa_2\|(\boldsymbol{G}^N)^T-\boldsymbol{G}^T\|_{op} \\
&\leq \|\bar{Q}^N(0)-\bar{Q}(0)\|_1 + T\|\lambda^N-\lambda\|_1 + 2T\|\mu^N-\mu\|_1 + \kappa_3\|(\boldsymbol{G}^N)^T-\boldsymbol{G}^T\|_{op}
\end{align*}
for some $\kappa_3<\infty$. Finally, the claim follows from  Assumptions \ref{assumption:GNGop} and\ref{assumption:Nbound}. 
\end{proof}

\subsection{Completing the Proofs of Theorem \ref{thm-main} and Corollaries \ref{coro:bQNbQ-conv0} and \ref{coro:example}}
\label{thmprf:fullconv}

\begin{proof}[Proof of Theorem \ref{thm-main} ] 
First,  we bound the LHS of \eqref{eqn:main1} by utilizing the intermediate process as follows:
\begin{align*}
&\frac{1}{N} \sum_{i=1}^N \E \sup_{0 \le t \le T} \bigg|\bar{Q}_i^N(t)-N\int_{(i-1)/N}^{i/N}\bar{Q}_u(t)\,du \bigg| \notag\\
&\le \frac{1}{N} \sum_{i=1}^N \E \sup_{0 \le t \le T} \big|\bar{Q}_i^N(t)-\breve{Q}_i^N(t)\big| \\
& \quad + \frac{1}{N} \sum_{i=1}^N  \sup_{0 \le t \le T} \big|\breve{Q}_i^N(t)-\widetilde{Q}_{i/N}^N(t)\big| \\
& \quad + \frac{1}{N} \sum_{i=1}^N \sup_{0 \le t \le T} \bigg|\widetilde{Q}_{i/N}^N(t)-N\int_{(i-1)/N}^{i/N}\bar{Q}_u(t)\,du\bigg|\,. 
\end{align*}
For the first term in the upper bound, by Lemmas \ref{lem:compqueue} and \ref{lem:compbound}, we obtain
\begin{equation*}
\frac{1}{N} \sum_{i=1}^N \E \sup_{0 \le t \le T} \left|\bar{Q}^N_i(t) -\breve{Q}^N_i(t)\right| 
\le \frac{CT^\frac{1}{2}}{N^{\alpha/2}}\sqrt{\|\lambda^N\|_1 + 2\|\mu^N\|_1} 
\le C'_T N^{-\alpha/2}
\end{equation*}
for some $C'_T>0$, 
where the second inequality follows from Assumption \ref{assumption:Nbound}. 

The second term is equal to zero by \eqref{eq:lift0}. 
For the third term, we have
\begin{align*}
     \frac{1}{N} \sum_{i=1}^N \sup_{0 \le t \le T} \bigg|\widetilde{Q}_{i/N}^N(t)-N\int_{(i-1)/N}^{i/N}\bar{Q}_u(t)\,du \bigg| & = \frac{1}{N} \sum_{i=1}^N \sup_{0 \le t \le T} \bigg|N\int_{(i-1)/N}^{i/N}\widetilde{Q}_{u}^N(t)-\bar{Q}_u(t)\,du\bigg|\\
     &\le \sum_{i=1}^N  \int_{(i-1)/N}^{i/N}\sup_{0 \le t \le T}\big|\widetilde{Q}_{u}^N(t)-\bar{Q}_u(t)\big|\,du \\
     &= \|\widetilde{Q}^N-\bar{Q}\|_{T,1} \le C'' N^{-\alpha/2}
\end{align*}
for some constant $C''>0$, 
where the last inequality follows from Lemma \ref{lem:tQbQ-diff}. 
Finally, the bound in \eqref{eqn:main1} follows from combining the properties above.  This completes the proof. 
\end{proof}

\begin{lemma}
    \label{lem:Qbar-bound}
    Suppose $\|\boldsymbol{G}\|_{op}<\infty$ and that $\lambda$ and $\mu$ are bounded.
    Then
    $$\sup_{0 \le u \le 1} \sup_{0 \le t \le T} \bar{Q}_u(t) < \infty.$$
\end{lemma}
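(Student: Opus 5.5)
The plan is to bound $\bar{Q}_u(t)$ pointwise using the explicit representation \eqref{eq:Qbar1}:
$$\bar{Q}_u(t) = \bar{Q}_u(0) + \lambda_u t - \mu_u\big(t-\bar{I}_u(t)\big) + \int_0^1 \mu_v\big(t-\bar{I}_v(t)\big)G(v,u)\,dv.$$
If we know that $0\le \bar{I}_v(t)\le t$ for every $v$ and $t$, then the third term is nonpositive and can be dropped. The integral term is then at most $\|\mu\|_\infty\, t\int_0^1 G(v,u)\,dv \le \|\mu\|_\infty T\,\|\boldsymbol{G}\|_{op}$, since $\|\boldsymbol{G}\|_{op}=\sup_{u}\int_0^1 G(v,u)\,dv$ (the integral runs over the first argument). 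This gives
$$\bar{Q}_u(t)\le \bar{Q}_u(0)+T\|\lambda\|_\infty + T\|\mu\|_\infty\|\boldsymbol{G}\|_{op},$$
uniformly in $u$ and $t$. The whole argument therefore reduces to the \emph{busy-time bound} $\bar{I}_u(t)\le t$. Equivalently, writing $\bar{Y}=\Psi_{\boldsymbol{G}^T}(\bar{X})$ with $\bar{Y}=\operatorname{diag}(\mu)\bar{I}$, we need
$$\bar{Y}_u(t)-\bar{Y}_u(s)\le \mu_u(t-s),\qquad 0\le s\le t\le T,$$
and this is legitimate to translate back to $\bar{I}$ because $\mu_u>0$.

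To prove the busy-time bound I would use the monotone iteration from the proof of Theorem \ref{thm:unqfixpoint}: $W^n:=\pi^n_{X,F}(0)\uparrow \Psi_F(X)$ with $F=\boldsymbol{G}^T$. I would show by induction that each $W^n$ has increments bounded by $\mu_u(t-s)$; the base case $W^0=0$ is trivial. For the inductive step, set
$$g(t):=-\bar{X}(t)+\boldsymbol{F}W^n(t) = -\bar{Q}(0)-\lambda t+\mu t+\boldsymbol{F}\big(W^n(t)-\mu t\big).$$
By the induction hypothesis, $t\mapsto W^n_v(t)-\mu_v t$ is nonincreasing for each $v$. Since $F\ge 0$, it follows that $\boldsymbol{F}(W^n-\mu\,\cdot)$ is nonincreasing in $t$. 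Together with $\lambda\ge0$, this gives $g_u(t)-g_u(s)\le \mu_u(t-s)$. Both operations $x\mapsto x^+$ and the running supremum preserve the property "increments bounded by $\mu_u(t-s)$". Hence $W^{n+1}=\sup_{r\le \cdot}[g(r)]^+$ inherits the bound. Passing to the pointwise monotone limit $W^n\uparrow \bar{Y}$ preserves the inequality, which proves the busy-time bound.

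The main obstacle I foresee is the initial condition rather than the reflection. The estimate above yields a uniform bound only if $\sup_u \bar{Q}_u(0)<\infty$, whereas Assumption \ref{assumption:Nbound} only places $\bar{Q}(0)$ in $L_1$. So I would either read the lemma as requiring bounded $\bar{Q}(0)$, which is exactly the setting of Corollary \ref{coro:example}, or state the conclusion in the form $\sup_{u,t}\big(\bar{Q}_u(t)-\bar{Q}_u(0)\big)<\infty$. I would check which version is actually used later, when continuity in $u$ is exploited through dominated convergence. The remaining steps are routine. Nonnegativity $\bar{Q}\ge0$ and $\bar{I}\ge 0$ follow from Theorem \ref{thm:summary}. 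I also need that the iteration identity holds for every $u$ and not just almost every $u$, and Theorem \ref{thm:unqfixpoint} provides this.
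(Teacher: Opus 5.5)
Your proof is correct, but it takes a different route from the paper. The paper never proves $\bar I_u(t)\le t$. It applies the one-dimensional Skorokhod formula once, directly at the solution (that is, the fixed-point identity $\bar Y=\pi_{\bar X,\boldsymbol{G}^T}(\bar Y)$):
\[
\mu_u\bar I_u(t)=\max\Big\{0,\sup_{s\le t}\Big(-\bar X_u(s)+\int_0^1\mu_v\bar I_v(s)G(v,u)\,dv\Big)\Big\}\le \mu_u t+\int_0^1\mu_v\bar I_v(t)G(v,u)\,dv .
\]
This uses only $-\bar X_u(s)\le\mu_u s$ and the monotonicity of $\bar I_v$. Substituting into \eqref{eq:Qbar}, the interaction term cancels, giving $\bar Q_u(t)\le\bar X_u(t)+\mu_u t=\bar Q_u(0)+\big(\lambda_u+\int_0^1\mu_vG(v,u)\,dv\big)t$. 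There is no iteration and no limit passage.

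You instead keep $-(\boldsymbol{G}^T\mu)_u s$ paired with $(\boldsymbol{G}^TW^n)_u(s)$ rather than discarding it. Inducting over the iterates $\pi^n_{\bar X,\boldsymbol{G}^T}(0)$, you obtain the stronger fact that each $\bar I_u$ is $1$-Lipschitz (busy time nondecreasing). This is physically natural but is not part of Definition \ref{def:GFM}, and it is more than the lemma needs. Your inductive step is sound. Since $\bar Q(0)\ge0$, every iterate satisfies $W^n(0)=0$, so $0\le W^n_v(t)\le\mu_v t$ and $\boldsymbol{G}^TW^n$ is finite. The positive part and the running supremum both preserve the increment bound. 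For the limit, note that the supremum in Lemma \ref{lem:monotone-fixed-point1} is the right-continuous regularization of the pointwise limit. Your increment bound makes the pointwise limit continuous, so this subtlety is harmless.

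Your concern about the initial condition is well founded, and the paper's proof has exactly the same dependence. It ends with the bound above and concludes from boundedness of $\lambda,\mu$ and $\|\boldsymbol{G}\|_{op}<\infty$ alone. It never addresses $\sup_u\bar Q_u(0)$, which Assumption \ref{assumption:Nbound} only places in $L_1$. So the lemma genuinely needs bounded $\bar Q(0)$ as an extra hypothesis; this holds in Corollary \ref{coro:example}.

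The weaker form $\sup_{u,t}(\bar Q_u(t)-\bar Q_u(0))<\infty$ would not rescue the dominated convergence step in the proof of Corollary \ref{coro:bQNbQ-conv0}. There the integrand evaluates $\bar Q$ at the moving points $\lceil Nu\rceil/N$. An $N$-independent dominating function therefore requires a uniform bound on $\bar Q$ itself.
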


\begin{proof}
    Fix $u \in [0,1]$ and $t \in [0,T]$.
    From \eqref{eq:Qbar}, \eqref{eq:Xbar}, and the explicit form of the solution of one-dimensional Skorokhod problem, we have
    \begin{align*}
        \mu_u \bar{I}_u(t) & = \max\left\{0, \sup_{0 \le s \le t} \left(-\bar{X}_u(s) + \int_0^1 \mu_v \bar{I}_v(s) G(v,u) \,dv\right)\right\} \\
        & \le \max\left\{0, \sup_{0 \le s \le t} \left(\mu_u s + \int_0^1 \mu_v \bar{I}_v(s) G(v,u) \,dv\right)\right\} \\
        & \le \mu_u t + \int_0^1 \mu_v \bar{I}_v(t) G(v,u) \,dv,
    \end{align*}
    where the second line uses the observation that $\bar{X}_u(s) \ge -\mu_u s$.
    Combining this with \eqref{eq:Qbar} and \eqref{eq:Xbar} gives
    \begin{equation*}
        \bar{Q}_u(t) \le \bar{X}_u(t) + \mu_u t \le \bar{Q}_u(0) + \left(\lambda_u + \int_0^1 \mu_v  G(v,u) \,dv\right) t. 
    \end{equation*}
    Since $\lambda$ and $\mu$ are bounded and $\|\boldsymbol{G}\|_{op}<\infty$, we have the desired result.    
\end{proof}

Now we are ready to prove Corollary \ref{coro:bQNbQ-conv0}.

\begin{proof}[Proof of Corollary \ref{coro:bQNbQ-conv0}]
    Using \eqref{eqn:main1}, it suffices to show
    $$\lim_{N \to \infty} \frac{1}{N} \sum_{i=1}^N \sup_{0 \le t \le T} \left|N\int_{(i-1)/N}^{i/N} \bar{Q}_u(t)\,du - \bar{Q}_{i/N}(t)\right| = 0.$$
    
    For this, note that
    \begin{align*}
        \frac{1}{N} \sum_{i=1}^N \sup_{0 \le t \le T} \left|N\int_{(i-1)/N}^{i/N} \bar{Q}_u(t)\,du - \bar{Q}_{i/N}(t)\right|
        & = \sum_{i=1}^N \sup_{0 \le t \le T} \left| \int_{(i-1)/N}^{i/N} \left( \bar{Q}_u(t) - \bar{Q}_{\lceil Nu \rceil/N}(t) \right) du \right| \\
        & \le \int_0^1 \sup_{0 \le t \le T} \left| \bar{Q}_u(t) - \bar{Q}_{\lceil Nu \rceil/N}(t) \right| du.
    \end{align*}
    Since the mapping $[0,1] \ni u \mapsto \{\bar{Q}_u(t) : t \in [0,T]\} \in \mathcal{C}_T$ is assumed to be continuous a.e., the above integrand goes to $0$ as $N \to \infty$ for a.e.\ $u \in [0,1]$.
    By the dominated convergence theorem and Lemma \ref{lem:Qbar-bound}, the above integral converges to $0$ as $N \to \infty$.
    This completes the proof. \qedhere
\end{proof}

\begin{proof}[Proof of Corollary \ref{coro:example}]
    In view of Corollary \ref{coro:bQNbQ-conv0}, it suffices to show that the mapping $[0,1] \ni u \mapsto \{\bar{Q}_u(t) : t \in[0,T]\} \in \mathcal{C}_T$ is continuous a.e.
    From \eqref{eq:idletime}, \eqref{eq:Qbar} and \eqref{eq:Xbar} we see that $\bar{Q}_u(t)$ is the reflected process of the free process
    $$\bar{Y}_u(t) := \bar{Q}_u(0) + \lambda_u t -\left(\mu_u  - \int_0^1 \mu_v  G(v,u) \,dv \right)t-\int_0^1 \mu_v\bar{I}_v(t)G(v,u)\,dv$$
    for the one-dimensional Skorokhod problem reflected at $0$.
    Since $G(u,v)$ is assumed to be continuous in a.e.\ $(u,v)$, we have that for a.e.\ $u$, the map $(u,v) \mapsto G(v,u)$ is continuous for a.e.\ $v$.
    Since $\bar{Q}_u(0),\lambda_u,\mu_u$ are assumed to be continuous in a.e.\ $u$, $\mu$ is assumed to be bounded, and $\bar{I} \in \mathcal{C}_T^\uparrow(L_1)$, it then follows from the dominated convergence theorem that $[0,1] \ni u \mapsto \{\bar{Y}_u(t) : t \in [0,T]\} \in \mathcal{C}_T$ is continuous in a.e.\ $u$.
    This completes the proof.
\end{proof}

\subsection{Convergence of the Mean Empirical Measure}
\label{subsec:empconv}

\begin{proof}[Proof of Theorem \ref{thm:main2}]
Denote by Lip$_1$ the collection of functions $f \colon \mathcal{D}_T \to \mathbb{R}$ that are $1$-Lipschitz with respect to the uniform metric, that is, $|f(x)-f(y)| \le \sup_{0 \le t \le T} |x(t)-y(t)|$ for all $x,y \in \mathcal{D}_T$.
Define $\widetilde{W}_1$ by
\begin{equation*}
    \widetilde{W}_1(\mu,\nu) := \sup_{f \in \text{Lip}_1} \left(\int_{\mathcal{D}_T} f \,d\mu - \int_{\mathcal{D}_T} f \,d\nu \right), \quad \mu,\nu \in \mathcal{P}(\mathcal{D}_T).
\end{equation*}
It then suffices to show that
\begin{equation} \label{eq:nuN-conv-claim}
\widetilde{W}_1(\nu^N,\bar\nu) \to 0 \quad \mbox{in probability as } N \to \infty. 
\end{equation}
Recall the lifted intermediate process $\widetilde{Q}_{u}^N(t)$  in \eqref{eq:infliftqueue}. 
Let
$$\widetilde{\nu}^N = \frac{1}{N} \sum_{i=1}^N \delta_{\widetilde{Q}_{i/N}^N(\cdot)}.$$
Using the estimates between the finite system and the lifted intermediate system in Lemma \ref{lem:compbound} and \eqref{eq:lift0}, we have
\begin{align*}
    \E \, \widetilde{W}_1(\nu^N,\widetilde{\nu}^N) & = \E \sup_{f \in \text{Lip}_1} \left| \frac{1}{N} \sum_{i=1}^N [f(\bar{Q}^N_i)-f(\widetilde{Q}_{i/N}^N)] \right| \\
    &\le \frac{1}{N} \sum_{i=1}^N \E \sup_{0 \le t \le T} |\bar{Q}_i^N(t)-\widetilde{Q}^N_{i/N}(t)| \to 0
\end{align*} 
as $N \to \infty$.
Also using the estimate between the intermediate system and the limiting system in Lemma \ref{lem:lift-lip} we have
\[
    \widetilde{W}_1(\widetilde{\nu}^N,\nu) = \sup_{f \in \text{Lip}_1} \left| \frac{1}{N} \sum_{i=1}^N f(\widetilde{Q}^N_{i/N}) -\int_0^1 f(\bar{Q}_u)\,du \right| \le \|\widetilde{Q}^N-\bar{Q}\|_{T,1} \to 0
\]
as $N \to \infty$.
Combining these two gives \eqref{eq:nuN-conv-claim} and completes the proof.
\end{proof}

\medskip

\appendix
\section{Supporting Lemmas and Proofs}
\label{sec:appendix}

\subsection{Some Useful Lemmas on Operators} \label{app:useful}

\begin{lemma}
\label{lem:opnorm}
For $f \in \mathcal{D}_T(L_1)$ and integral operator $\boldsymbol{F}$, 
$$\|\boldsymbol{F}f\|_{T,1} \leq \|\boldsymbol{F}\|_{op}\|f\|_{T,1}. 
$$
\end{lemma}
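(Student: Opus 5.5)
The plan is to prove the bound pointwise in $u$ first, then take the supremum in time and integrate over $u$, finishing with Tonelli's theorem. The norms are
\[
\|\boldsymbol{F}f\|_{T,1}=\int_0^1\sup_{0\le t\le T}\Big|\int_0^1F(u,v)f_v(t)\,dv\Big|\,du,
\qquad
\|\boldsymbol{F}\|_{op}=\sup_{v}\int_0^1|F(u,v)|\,du .
\]

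First, fix $u$ and $t$. By the triangle inequality for integrals,
\[
\Big|\int_0^1F(u,v)f_v(t)\,dv\Big|\le\int_0^1|F(u,v)|\,|f_v(t)|\,dv\le\int_0^1|F(u,v)|\sup_{0\le s\le T}|f_v(s)|\,dv .
\]
The right-hand side does not depend on $t$. Taking the supremum over $t\in[0,T]$ therefore gives
\[
\sup_{t}|(\boldsymbol{F}f)_u(t)|\le\int_0^1|F(u,v)|\,g(v)\,dv,
\qquad\text{where } g(v):=\sup_{0\le s\le T}|f_v(s)| .
\]

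Next, integrate this inequality in $u$ and swap the order of integration using Tonelli's theorem, which applies because the integrand $|F(u,v)|\,g(v)$ is nonnegative. This gives
\[
\|\boldsymbol{F}f\|_{T,1}\le\int_0^1 g(v)\Big(\int_0^1|F(u,v)|\,du\Big)dv\le\|\boldsymbol{F}\|_{op}\int_0^1 g(v)\,dv=\|\boldsymbol{F}\|_{op}\|f\|_{T,1}.
\]

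The only delicate point is measurability, which is needed for Tonelli and for the norm to be defined at all. Since $f_v$ is càdlàg, the supremum defining $g$ can be taken over rational times together with $T$. This makes $g$ a countable supremum of functions measurable in $v$, so $g$ is measurable; the same argument applies to $u\mapsto\sup_t|(\boldsymbol{F}f)_u(t)|$. I assume $F$ is jointly measurable, which holds for every kernel considered in the paper. If the right-hand side is infinite there is nothing to prove. Otherwise the bound also shows $\boldsymbol{F}f$ has finite $\|\cdot\|_{T,1}$ norm, which is what later applications of the lemma use.
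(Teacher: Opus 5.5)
Your proof is correct and follows the paper's own argument step for step: the triangle inequality, bounding the time supremum by $\int_0^1 |F(u,v)|\sup_{0\le s\le T}|f_v(s)|\,dv$, swapping the order of integration, and bounding $\int_0^1|F(u,v)|\,du$ by $\|\boldsymbol{F}\|_{op}$. Your measurability remarks (a countable supremum over rational times via c{\`a}dl{\`a}g paths, and joint measurability of $F$ to justify Tonelli) go beyond the paper, which leaves these points implicit.
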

\begin{proof}
Using the definitions of operator $\boldsymbol{F}$ and norm $\|\cdot\|_{T,1}$, we have
\begin{align*}
    & \|\boldsymbol{F}f\|_{T,1} = \int_0^1\sup_{t\in[0,T]}\left|(\boldsymbol{F}f)_u(t)\right|\,du
    = \int_0^1 \sup_{t\in[0,T]}\left| \int_0^1F(u,v)f_v(t)\,dv \,\right|du\\
    &\leq \int_0^1 \sup_{t\in[0,T]} \int_0^1 \left|F(u,v)f_v(t)\right|\,dv \,du
    \leq \int_0^1 \int_0^1 \sup_{t\in[0,T]} \left|F(u,v)f_v(t)\right|\,dv \,du \\
    &\leq \int_0^1 \int_0^1  \left|F(u,v)\right|du \sup_{t\in[0,T]}\left|f_v(t)\right| dv\\
    & \leq \|\boldsymbol{F}\|_{op}\int_0^1\sup_{t\in[0,T]}\left|f_v(t)\right|dv = \|\boldsymbol{F}\|_{op}\|f\|_{T,1} \,. 
\end{align*}
\end{proof}

\begin{lemma}
\label{lem:compOpNorm}
For any integral operators $\boldsymbol{F}_1,\boldsymbol{F}_2$, $\|\boldsymbol{F}_1  \boldsymbol{F}_2\|_{op}\leq\|\boldsymbol{F}_1\|_{op}\cdot\|\boldsymbol{F}_2\|_{op}$. 
\end{lemma}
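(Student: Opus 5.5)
The plan is to compute the kernel of the composition explicitly and then use Tonelli's theorem together with the definition $\|\boldsymbol{F}\|_{op} = \sup_{v}\int_0^1 |F(u,v)|\,du$.

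First, by Fubini (justified a posteriori by the absolute bounds below), for $f\colon[0,1]\to\RR$,
\begin{equation*}
(\boldsymbol{F}_1\boldsymbol{F}_2 f)(u) = \int_0^1 F_1(u,w)\int_0^1 F_2(w,v) f(v)\,dv\,dw = \int_0^1 H(u,v) f(v)\,dv,
\end{equation*}
where $H(u,v) := \int_0^1 F_1(u,w)F_2(w,v)\,dw$. So $\boldsymbol{F}_1\boldsymbol{F}_2$ is again an integral operator with kernel $H$, and $\|\boldsymbol{F}_1\boldsymbol{F}_2\|_{op} = \sup_v \int_0^1 |H(u,v)|\,du$.

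Next, fix $v\in[0,1]$. We bound $|H(u,v)|\le \int_0^1 |F_1(u,w)|\,|F_2(w,v)|\,dw$ and integrate in $u$. Since the integrand is nonnegative, Tonelli allows us to swap the order of integration:
\begin{equation*}
\int_0^1 |H(u,v)|\,du \le \int_0^1 |F_2(w,v)| \left(\int_0^1 |F_1(u,w)|\,du\right) dw \le \|\boldsymbol{F}_1\|_{op}\int_0^1 |F_2(w,v)|\,dw \le \|\boldsymbol{F}_1\|_{op}\|\boldsymbol{F}_2\|_{op}.
\end{equation*}
Taking the supremum over $v$ gives the claim.

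The only delicate point is measurability and the interchange of integrals. Joint measurability of $(u,w,v)\mapsto F_1(u,w)F_2(w,v)$ follows from the measurability of the kernels. Tonelli needs no integrability hypothesis for nonnegative integrands, and the bound just obtained shows that $H(\cdot,v)$ is finite for a.e.\ $u$ whenever the right side is finite. If $\|\boldsymbol{F}_1\|_{op}$ or $\|\boldsymbol{F}_2\|_{op}$ is infinite, the inequality is trivial. For the cases where $\boldsymbol{F}_1$ or $\boldsymbol{F}_2$ is the identity $\mathbf{1}$, which has no kernel, the identity $\mathbf{1}\boldsymbol{F}=\boldsymbol{F}\mathbf{1}=\boldsymbol{F}$ together with $\|\mathbf{1}\|_{op}=1$ gives the inequality directly.
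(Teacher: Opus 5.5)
Your proof is correct and follows the paper's argument essentially line by line. You identify the kernel $\int_0^1 F_1(u,w)F_2(w,v)\,dw$ of the composition, bound it in absolute value, swap the order of integration, and bound the inner $u$-integral by $\|\boldsymbol{F}_1\|_{op}$ before taking the supremum over $v$; your extra remarks on Tonelli, measurability, and the identity operator are harmless details the paper leaves implicit.
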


\begin{proof}
Take $f\in L_1([0,1])$. We have 
\begin{align*}
(\boldsymbol{F}_1 \boldsymbol{F}_2f)(u) 
&=\int_0^1F_1(u,w)\int_0^1 F_2(w,v)f(v)\,dv\,dw\\
&=\int_0^1\left(\int_0^1F_1(u,w) F_2(w,v)\,dw\right)f(v)\,dv\,. 
\end{align*}
Therefore,
\begin{align*}
\|\boldsymbol{F}_1  \boldsymbol{F}_2\|_{op} &= \sup_{v\in[0,1]}\int_0^1\bigg|\int_0^1F_1(u,w) F_2(w,v)dw\bigg|du\\
&\leq \sup_{v\in[0,1]}\int_0^1\int_0^1|F_1(u,w)| \cdot |F_2(w,v)|dwdu\\
& = \sup_{v\in[0,1]}\int_0^1\left(\int_0^1|F_1(u,w)|du\right) |F_2(w,v)|dw\\
&\leq \sup_{v\in[0,1]}\int_0^1\left(\sup_{z\in[0,1]}\int_0^1|F_1(u,z)|du\right) |F_2(w,v)|dw\\
&=\|\boldsymbol{F}_1\|_{op}\sup_{v\in[0,1]}\int_0^1|F_2(w,v)|dw\\
&=\|\boldsymbol{F}_1\|_{op}\cdot\|\boldsymbol{F}_2\|_{op}\,. 
\end{align*}
\end{proof}

\subsection{Supporting Proofs for the Infinite-Dimensional Skorokhod Problem}
\label{sec:fixed-point-existence-pf}

In this section, we prove Theorem \ref{thm:monotone-fixed-point}.
The argument is analogous to that of Theorem 14.2.2 of \cite{Whitt2002}, by using the fixed-point result on order sets established in Theorem 3.8.1 of \cite{edwards2012functional}.
However, adjustments have to be made for the infinite dimensional setup here.

We will follow the notation in Section 0.1.4 of \cite{edwards2012functional} and apply to the space $\mathcal{D}_T^\uparrow(L_1)$.
Consider the component-wise partial order on $L_1$ and $\mathcal{D}_T^\uparrow(L_1)$, that is, $f \equiv (f_u)_{u \in [0,1]} \le g \equiv (g_u)_{u \in [0,1]}$ in $L_1$ if $f_u \le g_u$ in $\mathbb{R}$ for all $u \in [0,1]$, and $x \equiv (x_u(t)) \le y \equiv (y_u(t))$ in $\mathcal{D}_T^\uparrow(L_1)$ if $(x_u(t))_{u \in [0,1]} \le (y_u(t))_{u \in [0,1]}$ in $L_1$ for all $t \in [0,T]$.
Let $\{x^n\}_{n \in \mathbb{N}}$ be a sequence in $\mathcal{D}_T^\uparrow(L_1)$. 
An element $x \in \mathcal{D}_T^\uparrow(L_1)$ majorizes, or is a majorant of, $\{x_n\}$ if and only if $x_n \le x$ for each $n \in \mathbb{N}$.
An element $x_0 \in \mathcal{D}_T^\uparrow(L_1)$ is the supremum of $\{x_n\}$ in $\mathcal{D}_T^\uparrow(L_1)$ if and only if $x_0$ is a majorant of $\{x_n\}$ and $x_0 \le x$ for any majorant $x$.

\begin{lemma} \label{lem:monotone-fixed-point1}
    Suppose $x^n \in \mathcal{D}_T^\uparrow(L_1)$ is majorized by some $y \in \mathcal{D}_T^\uparrow(L_1)$ and $\{x^n\}$ is increasing.
    The supremum of $\{x^n\}$ exists. 
\end{lemma}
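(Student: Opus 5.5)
The natural candidate for the supremum is the pointwise limit. Define $x_u(t) := \lim_{n\to\infty} x^n_u(t) = \sup_n x^n_u(t)$ for each $u\in[0,1]$ and $t\in[0,T]$. This limit exists and is finite because $\{x^n_u(t)\}_n$ is nondecreasing and bounded above by $y_u(t)<\infty$. The plan is to show first that $x\in\mathcal{D}_T^\uparrow(L_1)$, and then that $x$ is the least majorant.

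\textbf{Membership of $x$ in $\mathcal{D}_T^\uparrow(L_1)$.}
\begin{enumerate}
\item Nonnegativity and monotonicity pass to the pointwise supremum. Since $x^n_u(0)\ge 0$, we get $x_u(0)\ge 0$. For $t_1<t_2$, the inequality $x^n_u(t_1)\le x^n_u(t_2)$ gives $x_u(t_1)\le x_u(t_2)$ after taking the supremum over $n$.
\item Measurability in $u$ at each fixed $t$ holds because $x_u(t)$ is a countable supremum of measurable functions.
\item Integrability follows from $0\le x^n\le y$ and monotonicity in $t$. These give $\sup_t |x_u(t)| = x_u(T)\le y_u(T)$, so $\|x\|_{T,1}\le \|y\|_{T,1}<\infty$.
\item Left limits exist automatically, since $t\mapsto x_u(t)$ is nondecreasing and bounded.
\end{enumerate}

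\textbf{Right-continuity.} This is the main obstacle, because an increasing supremum of right-continuous increasing functions is in general only lower semicontinuous. Fix $u$ and $t$. Monotonicity gives $x_u(t)\le \lim_{h\downarrow 0}x_u(t+h)$. For the reverse inequality we would need to swap $\lim_h$ with $\sup_n$:
\[
\inf_h \sup_n x^n_u(t+h) \overset{?}{=} \sup_n \inf_h x^n_u(t+h) = \sup_n x^n_u(t).
\]
This swap is not automatic. My first attempt would be to bypass it by redefining $x$ as its right-continuous regularization, $\hat x_u(t):=\lim_{h\downarrow0}x_u(t+h)$ (with $\hat x_u(T)=x_u(T)$). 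Then $\hat x$ is increasing, càdlàg and nonnegative. It majorizes each $x^n$, because $x^n_u(t)=\lim_h x^n_u(t+h)\le \lim_h x_u(t+h)=\hat x_u(t)$. It is still bounded by $y$, because $y$ is right-continuous and $x\le y$ give $\hat x_u(t)\le \lim_h y_u(t+h)=y_u(t)$. Measurability in $u$ is preserved, since $\hat x_u(t)$ is a limit along a rational sequence $h_k\downarrow 0$. The same argument shows that $\hat x$ is $\le$ any majorant $z\in\mathcal{D}_T^\uparrow(L_1)$: from $x\le z$ pointwise and right-continuity of $z$ we get $\hat x_u(t)\le \lim_h z_u(t+h)=z_u(t)$.

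\textbf{Identifying the supremum.} Combining these, $\hat x\in\mathcal{D}_T^\uparrow(L_1)$ majorizes $\{x^n\}$ and lies below every majorant in $\mathcal{D}_T^\uparrow(L_1)$, so $\hat x$ is the supremum in the order sense of Section 0.1.4 of \cite{edwards2012functional}. In the setting of Theorem \ref{thm:monotone-fixed-point}, the iterates come from $\pi_{X,F}$, and one could additionally hope to show $\hat x=x$ there. That question is not needed for the present lemma, which only asserts existence of the supremum.
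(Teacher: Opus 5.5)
Your proposal is correct and follows the paper's proof: take the pointwise limit $\tilde x_u(t)=\lim_n x^n_u(t)$, replace it by its right-continuous regularization while keeping the value at $T$, and show this is the least majorant in $\mathcal{D}_T^\uparrow(L_1)$. You also supply details the paper leaves as ``not difficult'' and ``clearly'', namely the bound by $y$ and minimality via right-continuity of an arbitrary majorant $z$. The paper additionally checks that $x_u(T)=x_u(T-)$, which matters only if paths in $\mathcal{D}_T$ are required to be continuous at $T$; under that convention your construction satisfies it too, because for increasing functions $\sup_n$ and $\sup_{s<T}$ commute.
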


\begin{proof}
Let $\tilde{x}_u(t) := \lim_{n \to \infty} x^n_u(t)$ for each $t \in [0,T]$ and $u \in [0,1]$, which exists and is finite because $\{x^n_u(t)\}$ is increasing and bounded from above by $y_u(t)$.
    Then $t \mapsto \tilde{x}_u(t)$ is increasing so that both one-sided limits exist.
    Let $x_u(t):=\tilde{x}_u(t+)$ for $0 \le t < T$ and $x_u(T):=\tilde{x}_u(T)$.
    It is not difficult to show that $x_u(t+)=x_u(t)$ for $0 \le t < T$.
    As for $t=T$, since $x^n_u(T)=x^n_u(T-)$, we must have $\tilde{x}_u(T)=\tilde{x}_u(T-)$ and hence $x_u(T)=x_u(T-)$.
    Therefore $x \in \mathcal{D}_T^\uparrow(L_1)$. 
    Clearly, $x$ is the supremum of $\{x^n\}$.
\end{proof}

The following are some properties of $\pi \equiv \pi_{X,F}$ introduced in Definition \ref{def:pidefD} for a given $X\in \mathcal{D}_T(L_1)$ and $\boldsymbol{F}\in\mathcal{R}$.

\begin{lemma}\label{lem:monotone-fixed-point2}
    \begin{itemize} 
    \item []
\item  [(i)]
If $x \in \mathcal{D}_T^\uparrow(L_1)$, then $\pi(x) \in \mathcal{D}_T^\uparrow(L_1)$.

\item [(ii)]
 If $x \le y$ in $\mathcal{D}_T^\uparrow(L_1)$, then $\pi(x) \le \pi(y)$ in $\mathcal{D}_T^\uparrow(L_1)$.
    
\item [(iii)]
If $x^n \uparrow x$ in $\mathcal{D}_T^\uparrow(L_1)$, then $\pi(x^n) \uparrow \pi(x)$ in $\mathcal{D}_T^\uparrow(L_1)$. 
\end{itemize}
\end{lemma}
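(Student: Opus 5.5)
For (i), the plan is to write $\pi(x) = \eta(\boldsymbol{F}x - X)$, where $\eta(Z)_u(t) = \sup_{0\le s\le t}[Z_u(s)]^+$. By Lemma \ref{lem:maptospace}, $\boldsymbol{F}x \in \mathcal{D}_T^\uparrow(L_1)$, so $\boldsymbol{F}x - X \in \mathcal{D}_T(L_1)$. It then remains to check that $\eta$ maps $\mathcal{D}_T(L_1)$ into $\mathcal{D}_T^\uparrow(L_1)$, which I would do in three steps.

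\begin{itemize}
\item For fixed $u$, the running supremum of the positive part of a c\`adl\`ag function is nonnegative, nondecreasing and c\`adl\`ag. Right continuity holds because $\sup_{s\le t+h}[z(s)]^+ = \max\{\sup_{s\le t}[z(s)]^+, \sup_{t<s\le t+h}[z(s)]^+\}$, and the second term tends to $[z(t)]^+$ by right continuity of $z$.
\item Measurability in $u$ holds because, by right continuity, the supremum may be taken over rational $s$ together with $t$.
\item Integrability follows from $\|\eta(Z)\|_{T,1} \le \|Z\|_{T,1} \le \|\boldsymbol{F}\|_{op}\|x\|_{T,1} + \|X\|_{T,1} < \infty$, using Lemma \ref{lem:opnorm}.
\end{itemize}

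For (ii), if $x\le y$ componentwise then $F\ge 0$ gives $(\boldsymbol{F}x)_u(s)\le(\boldsymbol{F}y)_u(s)$ for every $u,s$. Hence $-X_u(s)+(\boldsymbol{F}x)_u(s)\le -X_u(s)+(\boldsymbol{F}y)_u(s)$. Since $[\cdot]^+$ and the running supremum are both monotone, $\pi(x)\le\pi(y)$ follows pointwise for every $u$ and $t$.

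For (iii), I interpret $x^n\uparrow x$ in the sense of Lemma \ref{lem:monotone-fixed-point1}, namely that $x$ is the supremum of the increasing sequence. By (ii), $\pi(x^n)$ is increasing and bounded above by $\pi(x)$. It is therefore enough to show that $\pi(x)$ is the least majorant, and for that it suffices to prove $\lim_n \pi(x^n)_u(t)\ge \pi(x)_u(t)$ pointwise for each $u,t$. This step is the main obstacle: the supremum $x$ produced in Lemma \ref{lem:monotone-fixed-point1} is the right-continuous regularization $\tilde x(t+)$ of the pointwise limit $\tilde x$. So $x^n_v(s)\uparrow \tilde x_v(s)$, which may be strictly smaller than $x_v(s)$ at jump times.

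I plan to handle this in two steps.

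\begin{enumerate}
\item At each fixed time $s$, monotone convergence (valid since $F\ge0$ and $x^n_v(s)\ge 0$ is increasing) gives $(\boldsymbol{F}x^n)_u(s)\uparrow(\boldsymbol{F}\tilde x)_u(s)$. Interchanging the two increasing limits (in $n$ and the supremum over $s\le t$) then yields
\begin{equation*}
\lim_n \pi(x^n)_u(t) = \sup_{0\le s\le t}\big[-X_u(s)+(\boldsymbol{F}\tilde x)_u(s)\big]^+ .
\end{equation*}
\item Next I compare $(\boldsymbol{F}\tilde x)_u$ with $(\boldsymbol{F}x)_u$. Since $\tilde x_v(s) = x_v(s)$ except at the countably many jump times of $\tilde x_v$, and $x_v(s) = \lim_{r\downarrow s}\tilde x_v(r)$, dominated convergence (dominated by $F(u,v)\,y_v(T)$ with $y$ a majorant) gives $(\boldsymbol{F}x)_u(s) = \lim_{r\downarrow s}(\boldsymbol{F}\tilde x)_u(r)$. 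Combining this with right continuity of $X_u$, the term $[-X_u(s)+(\boldsymbol{F}x)_u(s)]^+$ is the limit of $[-X_u(r)+(\boldsymbol{F}\tilde x)_u(r)]^+$ as $r\downarrow s$. That bounds it by the limit above for $s<t$; when $s=t$ it only bounds it by the same expression at times slightly after $t$.
\end{enumerate}

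Because of that endpoint issue, the inequality at $s=t$ is delicate. The fix I intend is to note that $\lim_n\pi(x^n)$ is nondecreasing in $t$, so the least majorant in $\mathcal{D}_T^\uparrow(L_1)$ is its right-continuous regularization, which is exactly what the ordering of Lemma \ref{lem:monotone-fixed-point1} produces. The estimate above, taken over $r\in(t,t+h]$ and letting $h\downarrow0$, shows that this regularization dominates $\pi(x)_u(t)$. Hence the supremum of $\pi(x^n)$ equals $\pi(x)$.
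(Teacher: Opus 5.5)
Your proposal is correct and follows essentially the paper's argument: (i) and (ii) match, and in (iii) you use the same key facts---$(\boldsymbol{F}x)_u(s)=(\boldsymbol{F}\tilde x)_u(s+)$ by dominated convergence, right continuity of $X_u$, and a right limit in $t$ to handle the endpoint $s=t$. The difference is only in packaging: you compute $\lim_n \pi(x^n)$ explicitly by monotone convergence and identify the supremum as its right-continuous regularization, whereas the paper tests against an arbitrary majorant $y$ and uses right continuity of $y$; the case $t=T$, which you do not mention, goes through directly because $x_v(T)=\tilde x_v(T)$ in the construction of Lemma \ref{lem:monotone-fixed-point1}.
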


\begin{proof}
    (i) By the dominated convergence theorem, $(\boldsymbol{F}x)_u(t+)=(\boldsymbol{F}x)_u(t)$ and $(\boldsymbol{F}x)_u(T-)=(\boldsymbol{F}x)_u(T)$.
    Also, $\|\pi(x)\|_{T,1} \le \|X\|_{T,1} + \|\boldsymbol{F}\|_{op} \|x\|_{T,1} < \infty$.
    So $\pi(x) \in \mathcal{D}_T^\uparrow(L_1)$. 

    (ii) By part (i), $\pi(x), \pi(y) \in \mathcal{D}_T^\uparrow(L_1)$. Since $F \ge 0$, the definition of $\pi$ guarantees that $\pi(x) \le \pi(y)$.

    (iii)  Since $x \ge x^n$, we have $\pi(x) \ge \pi(x^n)$ for each $n$, by part (ii).
    So $\pi(x)$ is an upper bound of $\{\pi(x^n)\}$.
    Let $y$ be any upper bound of $\{\pi(x^n)\}$, namely $y \ge \pi(x^n)$ for each $n$.
    To show that $\pi(x^n) \uparrow \pi(x)$, it remains to show that $y \ge \pi(x)$.

    Now fix $u \in [0,1]$, $t \in [0,T)$, $\varepsilon \in (0,1)$, and $K \in \mathbb{N}$. The case $t=T$ can be argued in a similar manner.
    Recall the proof of Lemma \ref{lem:monotone-fixed-point1}.  
    Letting $\tilde{x}_u(t) = \lim_{n \to \infty} x^n_u(t)$, we have $x_u(t) = \tilde{x}_u(t+)$.
    By the definition of $\pi$, there exists some $s \in [0,t]$ such that
    $$\pi(x)_u(t) \le [-X_u(s) + (\boldsymbol{F}x)_u(s)]^+ + \varepsilon.$$
    By the dominated convergence theorem, we have
    $$(\boldsymbol{F}x)_u(s) = (\boldsymbol{F}\tilde{x})_u(s+).$$
    By the right-continuity, there exists some $\delta \in [0,\frac{T-t}{K})$ such that
    $$(\boldsymbol{F}x)_u(s) \le (\boldsymbol{F}\tilde{x})_u(s+\delta)+\varepsilon, \quad -X_u(s) \le -X_u(s+\delta)+\varepsilon.$$
    By the definition of $\tilde{x}$ and the dominated convergence theorem, there exists some $n$ such that
    $$(\boldsymbol{F}\tilde{x})_u(s+\delta) \le (\boldsymbol{F}x^n)_u(s+\delta) + \varepsilon.$$
    Combining above estimates gives
    \begin{align*}
        \pi(x)_u(t) & \le [-X_u(s+\delta) + (\boldsymbol{F}x^n)_u(s+\delta)]^+ + 4\varepsilon \\
        & \le \pi(x^n)_u\bigg(t+\frac{T-t}{K}\bigg) + 4\varepsilon \le y_u\bigg(t+\frac{T-t}{K}\bigg)+4\varepsilon.
    \end{align*}
    Letting $\varepsilon \to 0$ and $K \to \infty$ gives
    $$\pi(x)_u(t) \le y_u(t).$$
        This shows that $\pi(x)$ is the least upper bound of $\{\pi(x^n)\}$ and hence completes the proof.
\end{proof}

\begin{proof}[Proof of Theorem \ref{thm:monotone-fixed-point}]
    From Lemma \ref{lem:monotone-fixed-point2}(ii), we have $\pi(x^0) \le \pi(y^0)$.
    Using this and the assumptions that $x^0 \le \pi(x^0)$ and $\pi(y^0) \le y^0$ we have by induction that
    $$x^0 \le x^1 \le \dotsb \le x^n \le y^0.$$
    Thus $\{x^n\}$ is increasing and majorized by $y^0$, and therefore $x^n \uparrow x$ for some $x \in \mathcal{D}_T^\uparrow(L_1)$ by Lemma \ref{lem:monotone-fixed-point1}.
    From Lemma \ref{lem:monotone-fixed-point2}(iii), we have $\pi(x^n) \uparrow \pi(x)$.
    On the other hand, $\pi(x^n) = x^{n+1} \uparrow x$.
    Therefore $x=\pi(x)$.
\end{proof}

\subsection{Additional Proofs}
\label{sec:additional-pf}

\begin{proof}[Proof of Lemma \ref{lem:GNinR}]
First, note that
\begin{align*}
\|(\boldsymbol{G}^N)^T\|_{op} &= \sup_{v \in [0,1]}\int_0^1  |G^N(v,u)|du 
\\&= \sup_{v \in [0,1]}\int_0^1  \sum_{i=1}^N \sum_{j=1}^N G^N_{ji} {\bf1}_{u \in K^N_i} {\bf1}_{v\in K^N_j}du \\
& = \sup_{v \in [0,1]}  \sum_{i=1}^N \sum_{j=1}^N G^N_{ji} {\bf1}_{v\in K^N_j}\int_0^1{\bf1}_{u \in K^N_i}du \\
&= \max_{j\in[N]}  \sum_{i=1}^N \frac{1}{N} G^N_{ji} \leq 1\,.
\end{align*}
For the spectral radius, let $f$ be an eigenfunction of  $(\boldsymbol{G}^N)^T$ with corresponding eigenvector $\lambda$. Then
\begin{align*}
\lambda f(u) &= \int_0^1G^N(v,u)f(v)dv \\
&= \int_0^1\sum_{i=1}^N \sum_{j=1}^N G^N_{ji} {\bf1}_{u \in K^N_i} {\bf1}_{v\in K^N_j}f(v)dv \\
&= \sum_{i=1}^N \sum_{j=1}^N G^N_{ji} {\bf1}_{u \in K^N_i}\int_0^1   {\bf1}_{v\in K^N_j}f(v)dv,
\end{align*}
and hence, for any $l=1,\dots,N$,
\begin{align*}
\lambda \int_0^1{\bf1}_{u \in K^N_l}f(u)du &= \int_0^1\sum_{i=1}^N \sum_{j=1}^N G^N_{ji} {\bf1}_{u \in K^N_i}{\bf1}_{u \in K^N_l}\int_0^1   {\bf1}_{v\in K^N_j}f(v)dv\,du \\
&= \sum_{i=1}^N \sum_{j=1}^N G^N_{ji} \int_0^1{\bf1}_{u \in K^N_i}{\bf1}_{u \in K^N_l}du\int_0^1   {\bf1}_{v\in K^N_j}f(v)dv \\
&= \sum_{j=1}^N \frac{1}{N} G^N_{jl} \int_0^1   {\bf1}_{v\in K^N_j}f(v)dv\,. 
\end{align*}
Writing $\mathbf{c}=(c_1,c_2,\dots,c_N)$, where $c_i = \int_0^1{\bf1}_{u \in K^N_i}f(u)du$, we have 
\begin{align*}
&\lambda c_i = \sum_{j=1}^N \frac{1}{N} G^N_{jl}c_j,
\end{align*}
or in vector form, $\lambda \mathbf{c} = \frac{1}{N}(G^N)^T\mathbf{c}\,. $
Therefore, if $\lambda$ is an eigenvalue of $(\boldsymbol{G}^N)^T$,  then $N\lambda$ is an eigenvalue of $(G^N)^T$. Under Assumption \ref{assumption:GNGop}, as the spectral radius of the finite matrix $G^N$ (and thus also $(G^N)^T$) satisfies $\rho(G^N)<N$, we have $\rho((\boldsymbol{G}^N)^T) \leq \frac{1}{N}\rho((G^N)^T) < 1$ (in fact, it holds that $\rho((\boldsymbol{G}^N)^T) = \frac{1}{N}\rho((G^N)^T)$ holds but we do not need it here). Therefore, $(\boldsymbol{G}^N)^T \in \mathcal{R}$.
\end{proof}

\begin{proof} [Proof of Lemma \ref{lem:compqueue}, continued] 
In order to prove the claim we must show that matrix multiplication is equivalent to operator composition of their corresponding blockwise constant operators. Consider block operator $\boldsymbol{A}^N$ created from matrix $A=\{A_{ij}\}_{i,j=1,\cdots N}$. For any $f\in\mathcal{D}_T(L_1)$, write $f_u(t) = \sum_{i=1}^Nf_u(t)\mathbf{1}_{u\in K_i^N}$. 
For $u\in K_i^N$, $\boldsymbol{A}^N$ acts on $f$ as 
$$(\boldsymbol{A}^Nf)_u(t) =\sum_{j=1}^NA_{ij}\int_{K_j^N}f_v(t)\,dv.$$
Let $\boldsymbol{B}^N$ be a similarly defined block operator with corresponding matrix $B$. For $u\in K_i^N$, the composition of $\boldsymbol{B}^N$ with $\boldsymbol{A}^N$ acts on $f$ as 
\begin{align*}
(\boldsymbol{B}^N\boldsymbol{A}^Nf)_u(t) &= \sum_{k=1}^N\sum_{j=1}^NB_{ik}A_{kj}\frac{1}{N}\int_{K_j^N}f_v(t)\,dv \\
&= \sum_{j=1}^N(BA)_{ij}\frac{1}{N}\int_{K_j^N}f_v(t)\,dv.
\end{align*}

Writing $\hat{A}=\frac{1}{N}A$ and $\hat{B}=\frac{1}{N}B$ yields for $u\in K_i^N$,
\begin{align*}
(\boldsymbol{A}^Nf)_u(t) &=N\sum_{j=1}^N\hat{A}_{ij}\int_{K_j^N}f_v(t)\,dv, \\
(\boldsymbol{B}^N\boldsymbol{A}^Nf)_u(t) &= N\sum_{j=1}^N(\hat{B}\hat{A})_{ij}\int_{K_j^N}f_v(t)\,dv.
\end{align*}
That is, with the scaling above, composition of piecewise constant operators acts as matrix multiplication of their counterparts. From this, it follows that $(G^N)^{(k)}(u,v)=\frac{1}{N^k}(G^N)^k_{ij}\mathbf{1}_{u\in K_i^N}\mathbf{1}_{v\in K_j^N}$. 
Hence, it follows that $$\|((\boldsymbol{G}^N)^T)^{(k)}\|_{op} = \max_{1\le i\le N}\sum_{j=1}^N(P^N)^k_{ij}\,,$$
thus completing the claim.
\end{proof}

\bibliographystyle{abbrv}
\bibliography{reference}

\end{document}